\newtheorem{theorem}{Theorem}[section]
\newtheorem{lemma}[theorem]{Lemma}
\theoremstyle{remark}
\theoremstyle{definition}
\theoremstyle{proposition}
\newtheorem{proposition}[theorem]{Proposition}
\numberwithin{equation}{section}
\begin{document}
\title{Geometry of shrinking Ricci solitons}
\author{Ovidiu Munteanu and Jiaping Wang}

\begin{abstract}
The main purpose of this paper is to investigate the curvature behavior of
four dimensional shrinking gradient Ricci solitons. For such a soliton $M$
with bounded scalar curvature $S,$ it is shown that the curvature operator $%
\mathrm{Rm}$ of $M$ satisfies the estimate $|\mathrm{Rm}|\leq c\,S$ for some
constant $c.$ Moreover, the curvature operator $\mathrm{Rm}$ is
asymptotically nonnegative at infinity and admits a lower bound $\mathrm{Rm}%
\geq -c\,\left( \ln r\right) ^{-1/4},$ where $r$ is the distance function to
a fixed point in $M.$ As a separate issue, a diameter upper bound for
compact shrinking gradient Ricci solitons of arbitrary dimension is derived
in terms of the injectivity radius.
\end{abstract}

\maketitle


This paper primarily concerns the geometry of the so-called shrinking
gradient Ricci solitons. Recall that a complete manifold $(M,g)$ is a
gradient Ricci soliton if the equation

\begin{equation*}
\mathrm{Ric}+\mathrm{Hess}\left( f\right) =\lambda\,g
\end{equation*}
holds for some function $f$ and scalar $\lambda.$ Here, $\mathrm{Ric}$ is
the Ricci curvature of $\left( M,g\right) $ and $\mathrm{Hess}\left(
f\right) $ the Hessian of $f.$ Note that if the potential function $f$ is
constant or the soliton is trivial, then the soliton equation simply says
the Ricci curvature is constant. So Ricci solitons are natural
generalization of Einstein manifolds. A soliton is called shrinking, steady
and expanding, accordingly, if $\lambda>0,$ $\lambda=0$ and $\lambda<0.$ By
scaling the metric $g,$ one customarily assumes $\lambda \in \left\{
-1/2,0,1/2\right\}.$ Solitons may be regarded as self-similar solutions to
the Ricci flows. As such, they are important in the singularity analysis of
Ricci flows. Indeed, according to \cite{EMT}, the blow-ups around a type-I
singularity point always converge to nontrivial gradient shrinking Ricci
solitons. It is thus a central issue in the study of Ricci flows to
understand and classify gradient Ricci solitons.

Aside from the Einstein manifolds, the Euclidean space $\mathbb{R}^n$
together with potential function $f(x)=\frac{\lambda}{2}\,|x|^2$ gives
another important example of gradient Ricci solitons. In the case dimension $%
n=2,$ according to \cite{H}, those are the only examples. For dimension $%
n=3, $ Perelman made the breakthrough in \cite{P} and showed that a three
dimensional non-collapsing shrinking gradient Ricci soliton with bounded
curvature must be a quotient of the sphere $S^3,$ or $\mathbb{R}^3,$ or $%
S^2\times \mathbb{R}.$ His result played a crucial role in the affirmative
resolution of the Poincar\'e conjecture. The extra conditions were later
removed through the effort of Naber \cite{N}, Ni and Wallach \cite{NW}, and
Cao, Chen and Zhu \cite{CCZ}. We should refer the readers to \cite{B} for
the classification of steady gradient Ricci solitons.

One salient feature of three dimensional shrinking Ricci solitons is that
their curvature operator must be nonnegative \cite{H}. This has been of
great utility in Perelman's argument. Unfortunately, for dimension four or
higher, this is no longer true as demonstrated by the example constructed in 
\cite{FIK}. Also, the existence of examples (see \cite{Ca1} for a list)
other than the aforementioned ones complicates the classification outlook.

The main purpose here is to investigate the curvature behavior of four
dimensional shrinking gradient Ricci solitons. Our first result concerns the
control of the curvature operator. Note that in the case of dimension three,
the curvature operator, being nonnegative, is obviously bounded by the
scalar curvature. In the case of dimension four, while the curvature
operator no longer has a fixed sign, we show that such a conclusion still
holds. In particular, it implies that the curvature operator must be bounded.

\begin{theorem}
\label{Main}Let $\left( M,g,f\right) $ be a four dimensional shrinking
gradient Ricci soliton with bounded scalar curvature $S$. Then there exists
a constant $c>0$ so that 
\begin{equation*}
\left\vert \mathrm{Rm}\right\vert \leq c\,S\text{ \ on }M\text{.}
\end{equation*}
\end{theorem}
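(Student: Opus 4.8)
The plan is to combine the elliptic equations that curvature satisfies on a shrinking soliton, the algebraic structure of four--manifolds, and — for the region near infinity — a blow--up analysis. Write $\Delta_f=\Delta-\langle\nabla f,\nabla\,\cdot\,\rangle$. One may assume $(M,g,f)$ is not flat, so $S>0$ everywhere: $S\ge0$ on any shrinker, and from $\Delta_f S=S-2|\mathrm{Ric}|^2$ the strong maximum principle forces $S\equiv0$ — hence $M$ Gaussian and the estimate trivial — as soon as $S$ vanishes somewhere. Normalizing $\lambda=\tfrac12$ and $S+|\nabla f|^2=f$, I would use the Cao--Zhou asymptotics $\tfrac14(r-c)^2\le f\le\tfrac14(r+c)^2$ and $\mathrm{Vol}\,B_p(r)\le c\,r^4$, the soliton identities $\nabla S=2\,\mathrm{Ric}(\nabla f)$ and
\[
\nabla_iR_{jk}-\nabla_jR_{ik}=-\,R_{ijkl}\,\nabla^l f,
\]
and the reaction--diffusion equations of Ricci flow, which on the soliton read $\Delta_f\mathrm{Rm}=\mathrm{Rm}+\mathrm{Rm}\ast\mathrm{Rm}$ and $\Delta_f\mathrm{Ric}=\mathrm{Ric}-2\,\mathrm{Rm}(\mathrm{Ric})$; the first yields, off the zero set of $\mathrm{Rm}$ and via Kato's inequality, $\Delta_f|\mathrm{Rm}|\ge|\mathrm{Rm}|-c|\mathrm{Rm}|^2$. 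In dimension four I would split $\Lambda^2=\Lambda^+\oplus\Lambda^-$ and use $|\mathrm{Rm}|^2=|W^+|^2+|W^-|^2+2|\mathring{\mathrm{Ric}}|^2+\tfrac16 S^2$, so that, $S$ being bounded, the theorem becomes the pair of estimates $|W^\pm|\le cS$ and $|\mathring{\mathrm{Ric}}|\le cS$.

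First I would establish an $L^2$ curvature bound. Using $e^{-f}$ — which decays like $e^{-r^2/4}$, so integrations by parts are legitimate once polynomial curvature growth is known — as an integrating factor in the $S$--equation gives $\int_M|\mathrm{Ric}|^2e^{-f}=\tfrac12\int_M S\,e^{-f}<\infty$; a weighted $\varepsilon$--regularity bootstrap for $\Delta_f\mathrm{Rm}=\mathrm{Rm}+\mathrm{Rm}\ast\mathrm{Rm}$, which works precisely because $n/2=2$ makes its cubic reaction term borderline against the four--dimensional weighted Sobolev inequality available on shrinkers, upgrades this to $\int_M(|\mathrm{Rm}|^2+|\nabla\mathrm{Rm}|^2)e^{-f}<\infty$; and a Moser iteration for $\Delta_f$ on unit balls — valid because shrinkers are non--collapsed at bounded scales and carry a local Sobolev inequality — then gives the a priori bound $|\mathrm{Rm}|(x)\le c\,(1+r(x))^N$, which retroactively justifies the integrations by parts and, in addition, shows that $|\mathrm{Rm}|(x)\le c\big(\int_{B_x(1)}|\mathrm{Rm}|^2\big)^{1/2}$ whenever $\int_{B_x(1)}|\mathrm{Rm}|^2<\varepsilon_0$.

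Next I would show $|\mathrm{Rm}|$ is globally bounded, and then upgrade to proportionality. If $|\mathrm{Rm}|$ were unbounded there would be $q_i\to\infty$ with $|\mathrm{Rm}|(q_i)\to\infty$; point--selecting so that $|\mathrm{Rm}|\le c\,|\mathrm{Rm}|(q_i)$ on a ball of radius $\sim|\mathrm{Rm}|(q_i)^{1/2}$ about $q_i$, rescaling the Ricci flow generated by the soliton by $|\mathrm{Rm}|(q_i)$, and invoking Shi estimates, Perelman non--collapsing and Hamilton--Cheeger--Gromov compactness, one obtains a complete non--flat ancient limit $(M_\infty,g_\infty,q_\infty)$ of bounded curvature with $|\mathrm{Rm}_\infty|(q_\infty)=1$ and, since $S$ is bounded while the scaling factor tends to $\infty$, $S_\infty\equiv0$, hence $\mathrm{Ric}_\infty\equiv0$; but then Perelman's functional satisfies $\mu(g_\infty,\tau)\ge\limsup_i\mu\big(g,\tau/|\mathrm{Rm}|(q_i)\big)=0$ for every $\tau>0$, so $\nu(g_\infty)\ge0$, and since $\nu\le0$ in general with equality only for flat $\mathbb{R}^n$, $g_\infty$ is flat — a contradiction. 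Hence $|\mathrm{Rm}|$, like $S$, is bounded, and since on a compact set $S>0$ is bounded below it remains to prove $|\mathrm{Rm}|\le cS$ outside a large ball. If this failed there would be $q_i\to\infty$ with $|\mathrm{Rm}|(q_i)/S(q_i)\to\infty$, so $S(q_i)\to0$; curvature being bounded, passing to a Hamilton--Cheeger--Gromov limit of the associated flows (after a further curvature rescaling if $|\mathrm{Rm}|(q_i)\to0$) yields a complete limit $(M_\infty,g_\infty,q_\infty)$ with $\mathrm{Rm}_\infty\not\equiv0$ and $S_\infty\equiv0$ (strong maximum principle), hence $\mathrm{Ric}_\infty\equiv0$. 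Dividing the soliton identity $R_{ijkl}\nabla^lf=\nabla_jR_{ik}-\nabla_iR_{jk}$ by $|\nabla f|\to\infty$ shows that in the limit $\mathrm{Rm}_\infty$ annihilates the unit field $v:=\lim\nabla f/|\nabla f|$, while $\mathrm{Hess}\,f=\tfrac12g-\mathrm{Ric}$ shows $v$ is asymptotically parallel; so by de Rham $M_\infty$ splits as $\mathbb{R}\times N^3$ with $N^3$ Ricci--flat, hence flat, contradicting $\mathrm{Rm}_\infty\not\equiv0$. (Equivalently, this exhibits the limiting geometry at the offending points as three--dimensional, where Hamilton's theorem gives $\mathrm{Rm}\ge0$ and so $|\mathrm{Rm}|\le S$.) Thus $|\mathrm{Rm}|\le cS$ on $M$.

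The hard part will be this last stage. Unlike the scalar curvature, $\mathrm{Rm}$ and its Weyl parts have no favorable sign in their reaction terms, and the ``shrinking'' term $+|\mathrm{Rm}|$ in $\Delta_f|\mathrm{Rm}|\ge|\mathrm{Rm}|-c|\mathrm{Rm}|^2$ points the wrong way for an upper estimate by the maximum principle; worse, a priori $S$ might decay at infinity, so $|\mathrm{Rm}|/S$ is not accessible to any direct maximum principle. This is what forces one into the dimension--four $\varepsilon$--regularity machinery — needed to make the finite weighted curvature energy effective — together with the blow--up analysis, whose limits must be ruled out by entropy rigidity in the high--curvature regime and by the de Rham splitting (the reduction to nonnegatively curved three--dimensional shrinkers) in the bounded-- and low--curvature regimes; establishing the relevant compactness and identifying these limits is, I expect, the technical core.
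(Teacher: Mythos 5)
Your scheme is genuinely different from the paper's (blow-up/compactness plus entropy rigidity and a splitting argument, versus the paper's pointwise maximum-principle chain built on the level sets of $f$), but it has a gap at exactly the point where the theorem is hard. In the step asserting global boundedness of $|\mathrm{Rm}|$, the contradiction rests on the claim $\mu(g_\infty,\tau)\ge\limsup_i\mu\bigl(g,\tau/|\mathrm{Rm}|(q_i)\bigr)=0$. The equality with $0$ would require $\mu(g,\sigma)\to 0$ as $\sigma\to 0$ for the given shrinker, and for a noncompact manifold this needs uniformly bounded geometry with uniformly almost-Euclidean structure at small scales near infinity --- which is essentially what you are trying to prove. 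What is actually available (Haslhofer--M\"uller) is only that a complete shrinker has finite entropy, hence is $\kappa$-noncollapsed at the curvature scale for scales at most $1$; with that, your point-picked blow-up limit is a complete, non-flat, Ricci-flat $4$-manifold with bounded curvature and positive asymptotic volume ratio, and nothing in the argument excludes, say, the Eguchi--Hanson metric. Such Ricci-flat ALE bubbles are precisely the known enemy (they do occur in the orbifold compactness theory for $4$-dimensional shrinkers cited in the introduction), so no soft entropy argument at this level of generality can rule them out. The paper eliminates this scenario by a different mechanism: the Gauss equation on the three-dimensional level sets $\Sigma(t)$ of $f$, where the curvature is determined by the Ricci curvature (Proposition \ref{Curv}), which converts control of $\mathrm{Ric}$ and $\nabla\mathrm{Ric}$ into control of $\mathrm{Rm}$ and feeds into maximum-principle estimates.

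The second stage has a parallel problem in its main regime. Granting $|\mathrm{Rm}|$ bounded, points with $|\mathrm{Rm}|(q_i)/S(q_i)\to\infty$ will in general have $|\mathrm{Rm}|(q_i)\to 0$ as well (on these solitons $S$ decays like $f^{-1}$, and the conclusion itself forces $|\mathrm{Rm}|\to 0$ wherever $S\to0$), so your parenthetical ``further curvature rescaling'' means rescaling by factors tending to zero, i.e.\ blowing down at scales $|\mathrm{Rm}|(q_i)^{-1/2}\to\infty$. At such scales shrinkers are typically collapsed (volume growth can be far below Euclidean, e.g.\ cylindrical ends), the noncollapsing estimate only applies at bounded scales, and no smooth Cheeger--Gromov limit need exist; moreover, even when a limit does exist, a ratio of two quantities both tending to zero is invisible to a purely qualitative limiting argument. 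Your de Rham splitting argument therefore only handles the easy case $\liminf_i|\mathrm{Rm}|(q_i)>0$. The paper's proof is instead quantitative throughout: it establishes $|\mathrm{Ric}|^2/S\le C$, then boundedness of $|\mathrm{Rm}|$ and $|\nabla\mathrm{Rm}|$, then $|\mathrm{Rm}|^2/S$, $|\nabla\mathrm{Rm}|^2/S$ and a gradient estimate for $\ln S$, and finally applies the maximum principle to $|\mathrm{Ric}|^2S^{-2}$ with respect to the drifted operator $\Delta_F$, $F=f-2\ln S$; this also produces a constant depending only on $A$ and $\sup_{D(r_0)}|\mathrm{Rm}|$, a uniformity your contradiction scheme would not give.
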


Our second result provides a lower bound for the curvature operator of a
four dimensional shrinking Ricci soliton with bounded scalar curvature. It
shows that the curvature operator becomes asymptotically nonnegative at
infinity. The result may be viewed as an extension of Hamilton and Ivey
curvature pinching estimate for the three dimension case. Note that Naber 
\cite{N} has classified all four dimensional shrinking Ricci solitons with
nonnegative and bounded curvature operator. In passing, we would also like
to point out that Cao and Chen \cite{CC} have obtained some interesting
classification results by imposing assumptions of different nature on the
curvature tensor.

\begin{theorem}
\label{Lower_Bound}Let $\left( M,g,f\right) $ be a four dimensional
shrinking gradient Ricci soliton with bounded scalar curvature. Then its
curvature operator is bounded below by 
\begin{equation*}
\mathrm{Rm}\geq -\left( \frac{c}{\ln r}\right) ^{\frac{1}{4}},
\end{equation*}
where $r$ is the distance function to a fixed point in $M.$
\end{theorem}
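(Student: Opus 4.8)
The plan is to apply a Hamilton-type maximum principle to the negative part of the curvature operator, using the drift Laplacian of the soliton together with Theorem~\ref{Main} to keep every curvature quantity bounded, and then to squeeze the (very slow) decay rate out of the soliton's asymptotics. Normalize $\lambda=1/2$, so $\mathrm{Ric}+\mathrm{Hess}(f)=\tfrac12 g$, and recall the standard facts $S+|\nabla f|^{2}=f$, $S+\Delta f=2$ (after adjusting $f$ by a constant), $S\ge 0$, $\int_{M}e^{-f}<\infty$, and the Cao--Zhu growth estimate $\tfrac14(r-c)_{+}^{2}\le f\le\tfrac14(r+c)^{2}$; in particular $\ln f$ and $\ln r$ are comparable outside a compact set, so it suffices to prove $\mathrm{Rm}\ge -c\,(\ln f)^{-1/4}$. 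Writing $\Delta_{f}=\Delta-\langle\nabla f,\nabla\,\cdot\,\rangle$, Theorem~\ref{Main} gives $-a_{0}\le\mathrm{Rm}\le a_{0}$ as an operator on $2$-forms, with $a_{0}=c\sup_{M}S$. Put $u:=\big(-\min\operatorname{spec}\mathrm{Rm}\big)_{+}$, a bounded, nonnegative, locally Lipschitz function on $M$; the assertion to be proved is $u\le c\,(\ln f)^{-1/4}$.

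First I would record the differential inequality satisfied by $u$. On a shrinking gradient soliton the curvature operator obeys the Lichnerowicz-type equation $\Delta_{f}\mathrm{Rm}=\mathrm{Rm}-2\big(\mathrm{Rm}^{2}+\mathrm{Rm}^{\#}\big)$, where $\mathrm{Rm}^{\#}$ is Hamilton's quadratic reaction term; in dimension four the parallel splitting $\Lambda^{2}=\Lambda^{+}\oplus\Lambda^{-}$ makes this transparent, with $\mathrm{Rm}^{\#}$ expressed through the $3\times3$ adjugates of the self-dual and anti-self-dual blocks and a nonnegative contribution from the traceless-Ricci block. Carrying out Hamilton's maximum-principle computation along a lowest unit eigendirection $e$ of $\mathrm{Rm}$ yields, in the barrier sense on $\{u>0\}$,
\[
\Delta_{f}u\ \ge\ u+2u^{2}+2\,\mathrm{Rm}^{\#}(e,e).
\]
Since $\mathrm{Rm}\ge -u$ by construction, $\mathrm{Rm}^{\#}$ preserves nonnegativity, and $\mathrm{Rm}^{\#}$ is homogeneous of degree two, one checks that $\mathrm{Rm}^{\#}(e,e)\ge -\Lambda\,u$ for some $\Lambda=\Lambda(a_{0})$; the finer point, used below, is that $\mathrm{Rm}^{\#}(e,e)$ is nonnegative unless the curvature operator is negative in at least two directions, in which case the trace identity $\operatorname{tr}\mathrm{Rm}=\tfrac12 S$ and Theorem~\ref{Main} force the second-most-negative eigenvalue also to be comparable to $-u$. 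This is the conditional, four-dimensional substitute for the Hamilton--Ivey pinching, which is no longer available a priori.

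The inequality by itself gives only $\Delta_{f}u\ge -\Lambda u$, which does not force decay, so the main step is to couple it with the soliton structure at infinity. I would proceed in two stages. Qualitatively: using that $\sup_{\{f\ge T\}}u$ is essentially monotone in $T$, the drift identity $\Delta_{f}f=2-f$, and the finiteness of the weighted volume, show $u\to0$ at infinity, so that the curvature operator is asymptotically nonnegative and the pinching refinement above becomes effective for large $f$. Quantitatively: work on the dyadic level sets $\Omega_{t}:=\{t\le f\le 2t\}$; testing the inequality against $u^{p}\varphi^{2}e^{-f}$ with cutoffs $\varphi$ adapted to $\Omega_{t}$, integrating by parts, and invoking the Poincar\'e inequality of $(M,e^{-f}\,dV)$, obtain a reverse-Poincar\'e estimate and a Moser iteration bounding $u$ pointwise on $\Omega_{t}$ by the weighted $L^{4}$ mean of $u$ over $\Omega_{t}$; then tracking this mean across successive annuli, where the favorable reaction $u+2u^{2}$ weighed against the drift term produces a definite loss, reduces matters to a nonlinear differential inequality in $t$ schematically of the form $y'\le -c\,y^{5}/t$, whose integration yields exactly $y(t)\le c\,(\ln t)^{-1/4}$. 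Combining the two stages gives $u\le c\,(\ln f)^{-1/4}$, and the comparability of $\ln f$ and $\ln r$ finishes the argument.

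I expect the decisive obstacle to lie entirely in this last part: turning the reaction term $\mathrm{Rm}^{\#}(e,e)$ from something one merely bounds into something that genuinely improves as $u$ decreases, and then threading the iteration so tightly that the borderline logarithmic rate survives. This is exactly where four-dimensionality (the adjugate structure of $\mathrm{Rm}^{\#}$ on $\Lambda^{\pm}$) and, above all, Theorem~\ref{Main} are indispensable: without a curvature bound there is no pinching mechanism at all, and it is the simultaneous control of $S$, the Weyl pieces $W^{\pm}$, and the Ricci block furnished by Theorem~\ref{Main} that keeps the constants in the iteration under control and pins down the exponent $1/4$.
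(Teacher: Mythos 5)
Your overall strategy---a drift-Laplacian maximum principle applied to the negative part of the curvature operator---is in the right spirit, but the two steps that carry all the weight are not actually proved, and one of them is precisely the point where dimension four is genuinely harder than dimension three. First, your treatment of the reaction term: the bound $\mathrm{Rm}^{\#}(e,e)\ge -\Lambda u$ gives, as you concede, only $\Delta_f u\ge (1-c\Lambda)u$, which is useless once $\Lambda$ is of the order of the curvature bound; and the ``finer point'' you invoke as a substitute for Hamilton--Ivey pinching (that when two eigenvalues of $\mathrm{Rm}$ are negative, the trace identity and Theorem~\ref{Main} force the second-most-negative one to be comparable to $-u$) is asserted, not proved, and no mechanism is given by which it would convert $\mathrm{Rm}^{\#}(e,e)$ into a term that improves as $u$ decreases. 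In $\Lambda^2(\mathbb{R}^4)$ the operator $\mathrm{Rm}^{\#}$ has no sign structure usable in this way, which is exactly why Hamilton--Ivey fails a priori in dimension four. The paper circumvents this by a different device: using the derivative estimate $|\nabla\mathrm{Rm}|\le cS$ (Theorem~\ref{Scalar}) to show the mixed components $R_{ijk4}$ in the direction of $\nabla f$ are $O(Sf^{-1/2})$, it reduces everything to the three-dimensional level sets $\Sigma(t)=\{f=t\}$, where the curvature is algebraically determined by the Ricci eigenvalues $\lambda_1\le\lambda_2\le\lambda_3$ and the genuine three-dimensional pinching identity $(\nu^2+\lambda\mu)S-2|\mathrm{Ric}|^2\nu=\lambda^2(\mu-\nu)+\mu^2(\lambda-\nu)+O(S^2f^{-1})$ becomes available for $\nu=\lambda_1+\lambda_2-\lambda_3$. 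Your proposal contains no counterpart of this reduction, and without it the 4D computation along a lowest eigendirection does not close.

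Second, both of your asymptotic stages are unsubstantiated. The ``qualitative'' step ($u\to 0$ at infinity from near-monotonicity of $\sup_{\{f\ge T\}}u$ and finiteness of the weighted volume) has no argument behind it; note that only boundedness of $S$ is assumed, so $S$ need not decay, and asymptotic nonnegativity of $\mathrm{Rm}$ is essentially the content of the theorem, not an input. The ``quantitative'' step is schematic: no derivation is offered of the reverse-Poincar\'e/Moser scheme, and the ODE $y'\le -c\,y^5/t$ is reverse-engineered from the desired answer $(\ln t)^{-1/4}$ rather than extracted from the differential inequality you wrote down, which---after absorbing $\mathrm{Rm}^{\#}$---contains no term forcing decay at all. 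In the paper the rate comes from a concrete pointwise construction: one studies $\nu/S$ with respect to the drifted operator $\Delta_F$, $F=f-2\ln S$, obtains $\Delta_F(\nu/S)\le -S^{-2}\left(\lambda^2(\mu-\nu)+\mu^2(\lambda-\nu)\right)+cf^{-1/2}$ (the error $cf^{-1/2}$ coming from Theorem~\ref{Scalar}), and applies the maximum principle to $w=\nu/S+kf^{-\varepsilon}+\varepsilon S^{-1}$ with cutoff $(R-f)/R$, choosing $\varepsilon=(\ln R)^{-1/2}$ and $k=c_0(\ln R)^{\varepsilon}$; the case analysis at the minimum point yields $\nu\ge -c(\ln R)^{-1/4}$ on $\Sigma(R/2)$, which is where the exponent $1/4$ actually comes from. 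As it stands, your proposal is missing both the algebraic pinching input and the quantitative mechanism, so it does not constitute a proof.
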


In both Theorem \ref{Main} and Theorem \ref{Lower_Bound}, the constant $c>0$
depends only on $A,$ the upper bound of the scalar curvature on $M,$ and $B,$
the maximum of curvature tensor $\left\vert \mathrm{Rm} \right\vert$ on a
geodesic ball $B_p(r_0),$ where $p$ is a minimum point of $f$ and $r_0$ is
determined by $A.$

We should point out that these conclusions are only effective for nontrivial
solitons. In fact, the potential function $f$ of the soliton is exploited in
an essential way in our proofs by working on the level sets of $f.$ Note
that the level set is of three dimension. So its curvature tensor is
determined by its Ricci curvature. This fact is crucial to our argument. It
enables us to control the curvature tensor of the ambient manifold by its
Ricci curvature, which leads to an estimate of the Ricci curvature by the
scalar curvature. The fact that Ricci curvature controls the growth of the
full curvature tensor is already known from the work of the first author and
Wang \cite{MWa}. However, the argument and estimate there are global in
nature, whereas the estimate here is valid in the pointwise sense, hence
stronger.

Our curvature estimate certainly leads to the conclusion that the set of
four dimensional shrinking gradient Ricci solitons with a fixed scalar
curvature upper bound must be compact in the orbifold sense. Moreover, the
possible orbifold points must be contained within a fixed compact set. In
this direction, we should point out that Haslhofer and M\"uller \cite{HM2}
have recently obtained an elegant and more general orbifold compactness
result for four dimensional shrinking gradient Ricci solitons, improving
various earlier results \cite{CS, HM1, Z, W}.

The fact that the curvature operator of four dimensional shrinking gradient
Ricci solitons enjoys similar control as in the dimension three case seems
to provide a glimpse of hope for a possible classification of the solitons.
In this sense, one goal here is to prove sharp decay estimates for the Riemann 
curvature tensor and its covariant derivatives, under the assumption that
the scalar curvature converges to zero at infinity.
This in particular enables us to conclude that such a soliton must in fact
be smoothly asymptotic to a cone at infinity. Here, by a cone, we mean a
manifold $[0,\infty )\times \Sigma $ endowed with Riemannian metric $%
g_{c}=dr^{2}+r^{2}\,g_{\Sigma },$ where $(\Sigma ,g_{\Sigma })$ is a closed $%
(n-1)$-dimensional Riemannian manifold. Denote $E_{R}=(R,\infty )\times
\Sigma $ for $R\geq 0$ and define the dilation by $\lambda $ to be the map $%
\rho _{\lambda }:E_{0}\rightarrow E_{0}$ given by $\rho _{\lambda }(r,\sigma
)=(\lambda \,r,\sigma ).$

We say that a Riemannian manifold $(M,g)$ is $C^{k}$ asymptotic to the cone $%
(E_{0},g_{c})$ if, for some $R>0,$ there is a diffeomorphism $\Phi
:E_{R}\rightarrow M\setminus \Omega $ such that $\lambda ^{-2}\,\rho
_{\lambda }^{\ast }\,\Phi ^{\ast }\,g\rightarrow g_{c}$ as $\lambda
\rightarrow \infty $ in $C_{loc}^{k}(E_{0},g_{c}),$ where $\Omega $ is a
compact subset of $M.$

We have the following result. 

\begin{theorem}
\label{cone_i} Let $\left( M,g,f\right) $ be a complete four
dimensional shrinking gradient Ricci soliton with scalar curvature
converging to zero at infinity. Then there exists a cone $E_0$ such that $%
(M,g)$ is $C^k$ asymptotic to $E_0$ for all $k.$
\end{theorem}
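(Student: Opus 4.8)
The plan is to first establish sharp pointwise decay for the full curvature tensor and all of its covariant derivatives, and then to extract the conical structure by analyzing the level sets of the potential function through the rescaled quantity $\rho = 2\sqrt{f}$. Normalize $\lambda = 1/2$, so $\mathrm{Ric} + \mathrm{Hess}(f) = \frac12 g$, and recall the standard soliton identities $S + |\nabla f|^2 = f$ (after adjusting $f$ by a constant) and $\nabla S = 2\,\mathrm{Ric}(\nabla f)$, as well as the Cao--Zhou growth estimate $\frac14(r-c_0)^2 \le f \le \frac14(r+c_0)^2$ for $r$ large, so that $\rho = 2\sqrt f$ differs from $r$ by a bounded amount. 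Since the hypothesis $S\to 0$ together with Theorem \ref{Main} gives $|\mathrm{Rm}| \le cS \to 0$, the Hessian $\mathrm{Hess}(f) = \frac12 g - \mathrm{Ric}$ is uniformly positive definite outside a compact set; hence $f$ is strictly convex on an end, its regular level sets $\Sigma_t = \{f = t\}$ are connected closed three-manifolds for $t$ large, and the flow of $\nabla f / |\nabla f|^2$ furnishes a diffeomorphism $\{f > R\} \cong (R,\infty)\times\Sigma$.

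The first, and principal, task is to show that for every $k$ one has $|\nabla^k\mathrm{Rm}| \le C_k\, r^{-2-k}$ on $M$, and in particular $|\nabla^k S| + |\nabla^k\mathrm{Ric}| \le C_k\, r^{-2-k}$. For $k = 0$ the point is to upgrade the qualitative decay $S\to 0$ to the quantitative rate $S = O(r^{-2})$. Combining the soliton identity $\Delta_f S = S - 2|\mathrm{Ric}|^2$, where $\Delta_f u = \Delta u - \langle\nabla f,\nabla u\rangle$ is the drift Laplacian, with the pinching $|\mathrm{Ric}|^2 \le |\mathrm{Rm}|^2 \le c^2 S^2$ supplied by Theorem \ref{Main}, one controls $S$ by weighted (Moser iteration) integral estimates on the annular regions $\{t < f < 2t\}$, exploiting the strong inward drift of $\Delta_f$ (recall $f\sim r^2/4$), the weighted Sobolev inequality available on shrinking solitons, the integrated identity $\int_M S\,e^{-f} = 2\int_M |\mathrm{Ric}|^2 e^{-f}$, and the nonnegativity $S\ge 0$; this forces $\sup_{\{f=t\}} S = O(t^{-1})$, that is, $S = O(r^{-2})$. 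The estimates on covariant derivatives then follow by a blow-up argument: by Theorem \ref{Main} the metric rescaled by the distance to $p$ has uniformly bounded curvature on unit balls near infinity, and shrinking solitons are $\kappa$-noncollapsed, so these rescaled balls have uniformly bounded geometry; local elliptic (Shi-type) estimates applied to the soliton system $\Delta_f\mathrm{Rm} = \mathrm{Rm} + \mathrm{Rm}\ast\mathrm{Rm}$ and its differentiated versions then yield $r^{2+k}|\nabla^k\mathrm{Rm}| = O(1)$. I expect this decay step --- and especially the passage from $S\to 0$ to the sharp rate $S = O(r^{-2})$ --- to be the main obstacle.

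Granting the decay estimates, the conical structure is read off from the geometry of $\rho = 2\sqrt f$. A direct computation from $\mathrm{Hess}(f) = \frac12 g - \mathrm{Ric}$ gives
\[
\mathrm{Hess}(\rho) = \tfrac1\rho\bigl(g - d\rho\otimes d\rho\bigr) - \tfrac2\rho\,\mathrm{Ric}, \qquad |\nabla\rho|^2 = 1 - \tfrac{S}{f} = 1 + O(r^{-4}),
\]
so $\rho$ satisfies the cone Hessian equation up to an error of order $r^{-3}$. In the coordinates $(\rho,\sigma)\in (R',\infty)\times\Sigma$ determined by the flow of $\nabla\rho/|\nabla\rho|^2$ one has $g = |\nabla\rho|^{-2}\,d\rho^2 + g(\rho)$, where $g(\rho)$ is the metric induced on $\{\rho = \mathrm{const}\}$, and the variation formula $\partial_\rho g(\rho) = \frac{2}{|\nabla\rho|^2}\,\mathrm{Hess}(\rho)\big|_{T\Sigma_\rho} = \frac{2}{|\nabla\rho|^2}\bigl(\frac1\rho g(\rho) - \frac2\rho\,\mathrm{Ric}\big|_{T\Sigma_\rho}\bigr)$ yields
\[
\partial_\rho\bigl(\rho^{-2}g(\rho)\bigr) = \rho^{-3}\Bigl(\tfrac{2}{|\nabla\rho|^2}-2\Bigr)g(\rho) - \tfrac{4}{\rho^{3}\,|\nabla\rho|^2}\,\mathrm{Ric}\big|_{T\Sigma_\rho}.
\]
Measured against $\rho^{-2}g(\rho)$, and using $1 - |\nabla\rho|^2 = S/f = O(r^{-4})$ together with $|\mathrm{Ric}| = O(r^{-2})$, the right-hand side is $O(r^{-3})$, hence integrable in $\rho$. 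Therefore $\rho^{-2}g(\rho)$ converges as $\rho\to\infty$ to a metric $g_\Sigma$ on $\Sigma$, while $|\nabla\rho|^{-2}\to 1$; this exhibits $(M,g)$ as $C^0$ asymptotic to the cone $E_0 = [0,\infty)\times\Sigma$ with $g_c = dr^2 + r^2\,g_\Sigma$, the diffeomorphism $\Phi$ being the one above followed by $\rho\mapsto r$.

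To promote this to $C^k$ convergence for every $k$ one differentiates the structure equations tangentially along $\Sigma_\rho$ and in $\rho$: each derivative of $\rho^{-2}g(\rho)$, of the Christoffel symbols of $g(\rho)$, and of $|\nabla\rho|^{-2}$ is expressed, via the Gauss--Codazzi relations and the identities above, in terms of $\nabla^j S$, $\nabla^j\mathrm{Ric}$ and $\nabla^j\mathrm{Rm}$ with $j\le k$, all of which are $O(r^{-2-j})$ by the decay step; after the natural rescaling these remain integrable in $\rho$, so $\rho^{-2}g(\rho)\to g_\Sigma$ in $C^k$ for every $k$, and in particular $g_\Sigma$ is smooth. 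Equivalently, $\lambda^{-2}\rho_\lambda^\ast\Phi^\ast g \to g_c$ in $C^k_{loc}(E_0,g_c)$ for all $k$. Since $\Sigma$ is a connected closed three-manifold and $g_\Sigma$ is smooth, $E_0$ is a genuine cone in the sense defined above, and Theorem \ref{cone_i} follows.
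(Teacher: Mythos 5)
Your overall architecture is the same as the paper's: first get the quantitative decay $S=O(r^{-2})$ (equivalently $Sf\leq C$), hence $|\mathrm{Rm}|\leq cS=O(r^{-2})$ by Theorem \ref{Main}, then derivative decay, then run a Kotschwar--Wang type level-set/ODE argument in $\rho=2\sqrt f$ to produce the cone. The second half of your write-up (the formulas for $\mathrm{Hess}(\rho)$, $|\nabla\rho|^{2}=1-S/f$, the evolution of $\rho^{-2}g(\rho)$ and its integrability) is correct in outline and is precisely the part the paper delegates to \cite{KW}, so no complaint there.

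The genuine gap is the step you yourself flag as the main obstacle: upgrading $S\to 0$ to $S\leq C f^{-1}$. The mechanism you propose --- Moser iteration on annuli using the weighted Sobolev inequality and the identity $\int_M S\,e^{-f}=2\int_M|\mathrm{Ric}|^{2}e^{-f}$ --- cannot deliver this. All of those weighted quantities carry the Gaussian factor $e^{-f}\sim e^{-r^{2}/4}$, so they are dominated by the region where $f$ is small and give essentially no information about $S$ on $\{f=t\}$ for large $t$ (e.g.\ the bound $\int_{\{t<f<2t\}}S\leq Ce^{2t}$ obtained this way is vacuous); and since $\Delta_f S\geq(1-\epsilon)S$ makes $S$ a \emph{sub}solution, a mean-value inequality only converts integral decay into pointwise decay --- it does not produce the integral decay in the first place. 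What actually closes this step in the paper is a barrier/maximum-principle argument: from $|\mathrm{Ric}|\leq cS$ one has $\Delta_f S\geq S-c_0S^{2}$, while $\Delta_f(f^{-1})\leq f^{-1}$ and $\Delta_f(f^{-2})\geq 2f^{-2}$ outside a compact set; the hypothesis $S\to0$ is used to choose $r_0$ with $S<\frac{1}{4c_0}$ off $D(r_0)$, so that $u:=S-af^{-1}+c_0a^{2}f^{-2}$ with $a=\frac{r_0}{2c_0}$ is negative on $\partial D(r_0)$ and satisfies $\Delta_f u\geq u\left(1-c_0S-c_0af^{-1}\right)$; a cutoff $\psi=\frac{R-t}{R}$ and the maximum principle then give $u\leq cf^{-2}$, i.e.\ $Sf\leq a+c$. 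You need either this argument or a genuine substitute for it; as written, your proposal asserts the conclusion of the hardest step rather than proving it. (Minor additional items to justify if you keep your route: connectedness of the level sets $\Sigma_t$, and the $\kappa$-noncollapsing/bounded-geometry input for your blow-up derivation of the derivative estimates, which the paper instead gets from Shi-type estimates applied to $\Delta_f\mathrm{Rm}=\mathrm{Rm}+\mathrm{Rm}\ast\mathrm{Rm}$.)
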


A recent result due to Kotschwar and L. Wang \cite{KW} states that two
shrinking gradient Ricci solitons must be isometric if they are $C^2$
asymptotic to the same cone. Together with our result, this implies that the
classification problem for four dimensional shrinking Ricci solitons with
scalar curvature going to zero at infinity is reduced to the one for the
limiting cones.

As a separate issue, we have also attempted to address the question whether
the limit of compact shrinking gradient Ricci solitons remains compact. This
question may be rephrased into one of obtaining a uniform upper bound for
the diameter of such solitons. Note that in the opposite direction Futaki
and Sano \cite{FS}, see also an improvement in \cite{FLL}, have already
established a universal diameter lower bound for (nontrivial) compact
shrinking gradient Ricci solitons. It remains to be seen whether a universal
diameter upper bound is available without any extra assumptions.

\begin{theorem}
\label{Diam}Let $\left( M,g,f\right) $ be a compact gradient shrinking Ricci
soliton of dimension $n$. Then the diameter of $\left( M,g\right) $ has an
upper bound of the form 
\begin{equation*}
\mathrm{diam}\left( M\right) \leq c\left( n,\mathrm{inj}\left( M\right)
\right) ,
\end{equation*}%
where $\mathrm{inj}\left( M\right) $ is the injectivity radius of $\left(
M,g\right) .$
\end{theorem}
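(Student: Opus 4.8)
The plan is to reduce the diameter estimate to an a priori bound for the potential function $f$, and then to obtain that bound from the injectivity radius via Croke's volume inequality. After scaling $g$ we may take $\lambda=1/2$, and after adding a constant to $f$ we may arrange $S+|\nabla f|^{2}=f$; the usual soliton identities then read $S+\Delta f=\tfrac n2$, $\nabla S=2\,\mathrm{Ric}(\nabla f,\cdot)$ and $\Delta _{f}f:=\Delta f-|\nabla f|^{2}=\tfrac n2-f$. Since $M$ is compact, the strong maximum principle applied to $\Delta _{f}S=S-2|\mathrm{Ric}|^{2}$ gives $S>0$ (if $S$ vanished somewhere it would vanish identically, forcing $\mathrm{Hess}\,f=\tfrac12 g$, impossible on a closed manifold); hence $f=S+|\nabla f|^{2}>0$, so $\sqrt f$ is $\tfrac12$-Lipschitz and $f(p)=S(p)$ at a minimum point $p$ of $f$. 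Integrating $\Delta _{f}f=\tfrac n2-f$ against $e^{-f}\,dV$ yields $\int _{M}f\,e^{-f}\,dV=\tfrac n2\int _{M}e^{-f}\,dV$, so the weighted average of $f$ equals $\tfrac n2$ and in particular $\min _{M}f\le \tfrac n2$. Combined with the Lipschitz bound this gives $f(x)\le\bigl(\sqrt{n/2}+\tfrac12 d(p,x)\bigr)^{2}$, hence $\max _{M}f\le n+\tfrac12\,\mathrm{diam}(M)^{2}$.

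Conversely, the soliton equation says $\mathrm{Ric}+\mathrm{Hess}\,f=\tfrac12 g$, i.e.\ the Bakry--Émery Ricci tensor equals the positive constant $\tfrac12 g$, while $|\partial _{r}f|\le|\nabla f|\le\sqrt{\max _{M}f}$ along any minimal geodesic. A generalized Myers theorem for $\mathrm{Ric}_{f}$ — for instance Qian's comparison applied to the $N$-Bakry--Émery Ricci tensor $\tfrac12 g-\tfrac1{N-n}\,df\otimes df$ with $N:=n+4\max _{M}|\nabla f|^{2}$, which is then $\ge\tfrac14 g$ everywhere and so forces $\mathrm{diam}(M)\le 2\pi\sqrt{N-1}$ — then yields a bound of the shape $\mathrm{diam}(M)\le c(n)\bigl(1+\sqrt{\max _{M}f}\bigr)$. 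Together with the inequality of the previous paragraph, this shows that a bound on $\mathrm{diam}(M)$ and a bound on $\max _{M}f$ are equivalent, so the theorem is reduced to proving that $\max _{M}f$ is controlled by $n$ and $\mathrm{inj}(M)$ alone.

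This last point is the heart of the matter, and the step I expect to be the main obstacle. The injectivity radius should enter through Croke's inequality $\mathrm{Vol}(M)\ge c(n)\,\mathrm{inj}(M)^{n}$, which one has to balance against volume estimates furnished by the soliton structure. On one side, near the minimum $p$ the potential and its gradient are bounded in terms of $n$ and a fixed radius while $\mathrm{Ric}_{f}=\tfrac12 g$, which should pin the weighted volume $W=\int _{M}e^{-f}\,dV$ from below (equivalently, since $\nu(g)=\log W-\tfrac n2\log(4\pi)$ for this normalization of $f$, keep Perelman's entropy bounded below, i.e.\ the soliton non-collapsed). On the other side, integrating by parts gives $\int _{M}f^{2}e^{-f}\,dV=\tfrac{n^{2}}{4}W+\int _{M}|\nabla f|^{2}e^{-f}\,dV\le\bigl(\tfrac{n^{2}}{4}+\tfrac n2\bigr)W$, so with the Bakry--Émery concentration inequality the measure $W^{-1}e^{-f}\,dV$ is concentrated where $f$ has size $\sim n$, whereas a large value $f(q)=\max _{M}f$ would, by the $\tfrac12$-Lipschitz bound for $\sqrt f$, make $e^{-f}$ exponentially small on a metric ball of radius $\asymp\sqrt{\max _{M}f}$ about $q$. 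Converting this tension into a quantitative bound for $\max _{M}f$ — that is, making rigorous the heuristic that a compact shrinker of large diameter must collapse its injectivity radius — is the crux; granting it, the Myers-type estimate above completes the proof.
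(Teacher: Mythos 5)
Your reductions at the start are fine: normalizing $S+|\nabla f|^{2}=f$, getting $S>0$, $|\nabla f|^{2}\le f$, and bounding $\mathrm{diam}(M)$ by a function of $n$ and $\max_{M}f$ (indeed the Cao--Zhou estimate $f(x)\ge \frac14 d(p,x)^{2}-c(n)d(p,x)$, quoted in the paper, gives this directly, so the $N$-Bakry--\'Emery Myers detour is not even needed). But the entire content of the theorem is the remaining claim that $\max_{M}f\le c\left(n,\mathrm{inj}(M)\right)$, and this is precisely the step you leave as a heuristic (``the crux''). So the proposal has a genuine gap, and moreover the mechanism you sketch for closing it looks unworkable: Croke's inequality gives $\mathrm{Vol}\left(B(q,\mathrm{inj})\right)\ge c(n)\,\mathrm{inj}^{n}$ near the maximum point $q$ of $f$, but the weighted measure of that ball being as small as $e^{-c\,f(q)}$ times its volume is perfectly consistent with the identity $\int_{M}f\,e^{-f}=\frac n2\int_{M}e^{-f}$ and with concentration of $e^{-f}dV$ on $\{f\sim n\}$; none of the inequalities you list is violated by letting $f(q)\to\infty$, so no quantitative bound on $\max_{M}f$ comes out of this tension.

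The paper uses the injectivity radius in a completely different, non-volumetric way, and that is the idea your outline is missing. Normalizing $\mathrm{inj}(M)=2$, take a minimizing normal geodesic $\sigma$ from the minimum point $p$ of $f$ to the maximum point $q=\sigma(R)$. Applying the second variation of arc length with a cutoff supported in $[0,R]$ and using $\mathrm{Ric}(\sigma',\sigma')=\frac12-(f\circ\sigma)''$ together with integration by parts gives
\begin{equation*}
f\left(\sigma(R)\right)-f\left(\sigma(t)\right)\ \ge\ \tfrac{1}{16}R-c(n)\qquad\text{for all }t\in\left[R-1,R-\tfrac12\right].
\end{equation*}
Then the injectivity radius lower bound is used to extend $\sigma$ as a minimizing geodesic over $[R-1,R+1]$, i.e.\ past the maximum point $q$, and a second application of the same second-variation identity with a tent cutoff centered at $R$ yields $2f\left(\sigma(R)\right)\le\int_{R-1}^{R+1}f\left(\sigma(t)\right)dt+c(n)$; since $f\left(\sigma(R)\right)=\max_{M}f$ dominates the integrand, this contradicts the linear growth above unless $R\le c(n)$, which bounds $\max_{M}f$ and hence the diameter. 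To make your proof work you would need to replace the Croke/concentration heuristic by an argument of this type, in which compactness enters through the existence of a maximum point of $f$ and the injectivity radius enters through the ability to continue a minimizing geodesic a definite length beyond it.
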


If one assumes in addition that the Ricci curvature of the soliton is
bounded below, then the conclusion follows from \cite{FMZ}. We also remark
that the assumption on the injectivity radius seems to be natural in view of
the non-collapsing result for Ricci flows proved by Perelman \cite{P}.
Certainly, our result also implies an upper bound for the volume, depending
on the injectivity radius alone.

\section{Curvature estimates}

In this section, we show that the curvature operator of a four dimensional
shrinking gradient Ricci soliton must be bounded if its scalar curvature is
so. We first recall some general facts concerning shrinking gradient Ricci
solitons which will be used throughout the paper. For $(M^n, g, f)$ a
shrinking gradient Ricci soliton, it is known \cite{H} that $S+\left\vert
\nabla f\right\vert ^{2}-f$ is constant on $M$, where $S$ is the scalar
curvature of $M.$ So, by adding a constant to $f$ if necessary, we may
normalize the soliton such that

\begin{equation}
S+\left\vert \nabla f\right\vert ^{2}=f.  \label{s}
\end{equation}%
Also, by a result of Chen \cite{Ch, Ca1}, the scalar curvature $S>0$ unless $%
M$ is flat. So in the following, we will assume without loss of generality
that $S>0.$ Tracing the soliton equation we get $\Delta f+S=\frac{n}{2}.$
Combined with (\ref{s}), this implies that 
\begin{equation}
\Delta _{f}\left( f\right) =\frac{n}{2}-f.  \label{delta}
\end{equation}
Here, $\Delta _{f}=\Delta -\left\langle \nabla f,\nabla \right\rangle $ is
the weighted Laplacian on $M,$ which is self adjoint on the space of square
integrable functions with respect to the weighted measure $e^{-f}dv.$

Concerning the potential function $f,$ Cao and Zhou \cite{CZ} have proved
that

\begin{equation}
\left( \frac{1}{2}r\left( x\right) -c\right) ^{2}\leq f\left( x\right) \leq
\left( \frac{1}{2}r\left( x\right) +c\right) ^{2}  \label{f}
\end{equation}%
for all $r\left(x\right) \geq r_{0}.$ Here $r\left( x\right) :=d\left(
p,x\right)$ is the distance of $x$ to $p,$ a minimum point of $f$ on $M,$
which always exists. Both constants $r_{0}$ and $c$ can be chosen to depend
only on dimension $n.$

Throughout the paper, we denote 
\begin{eqnarray*}
D\left( t\right) &=&\left\{ x\in M:f\left( x\right) \leq t\right\} \\
\Sigma \left( t\right) &=&\partial D\left( t\right) =\left\{ x\in M:f\left(
x\right) =t\right\} .
\end{eqnarray*}%
By (\ref{f}), these are compact subsets of $M$.

We also recall the following equations for curvatures. For proofs, one may
consult \cite{PW}. Here we follow the notations in \cite{MWa}.

\begin{eqnarray}
\Delta _{f}S &=&S-2\left\vert \mathrm{Ric}\right\vert ^{2}  \label{id} \\
\Delta _{f}R_{ij} &=&R_{ij}-2R_{ikjl}R_{kl}  \notag \\
\Delta _{f}\mathrm{Rm} &=&\mathrm{Rm}+\mathrm{Rm}\ast \mathrm{Rm}  \notag \\
\nabla _{k}R_{jk} &=&R_{jk}f_{k}=\frac{1}{2}\nabla _{j}S  \notag \\
\nabla _{l}R_{ijkl} &=&R_{ijkl}f_{l}=\nabla _{j}R_{ik}-\nabla _{i}R_{jk}. 
\notag
\end{eqnarray}%
In this section we will assume 
\begin{equation}
S\leq A\text{ }\ \text{on }M  \label{*}
\end{equation}%
for some constant $A>0.$ Obviously, there exists $r_{0}>0$, depending only
on $A$, so that 
\begin{equation*}
\left\vert \nabla f\right\vert \geq \frac{1}{2}\sqrt{f}\geq 1\text{ \ on \ }%
M\backslash D\left( r_{0}\right) .
\end{equation*}%
Our argument is based on the following important observation.

\begin{proposition}
\label{Curv}Let $\left( M,g,f\right) $ be a four dimensional shrinking
gradient Ricci soliton. Then for a universal constant $c>0,$

\begin{equation}
\left\vert \mathrm{Rm}\right\vert \leq c\left( \frac{\left\vert \nabla 
\mathrm{Ric}\right\vert }{\sqrt{f}}+\frac{\left\vert \mathrm{Ric}\right\vert
^{2}+1}{f}+\left\vert \mathrm{Ric}\right\vert \right)  \label{C0}
\end{equation}%
on $M\backslash D\left( r_{0}\right).$
\end{proposition}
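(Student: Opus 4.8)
The plan is to exploit the three-dimensionality of the level sets $\Sigma(t)$ of the potential function $f$. On a three-manifold the full Riemann tensor is determined algebraically by the Ricci tensor, so once we control the intrinsic curvature of $\Sigma(t)$ by its intrinsic Ricci, and relate both to the ambient quantities via the second fundamental form, we should be able to bound $|\mathrm{Rm}|$ on $M$ by ambient Ricci data. Concretely, away from $D(r_0)$ we have $|\nabla f|\geq \tfrac12\sqrt f\geq 1$, so $\Sigma(t)$ is a smooth hypersurface with unit normal $\nu=\nabla f/|\nabla f|$, and the second fundamental form of $\Sigma(t)$ is $\mathrm{II}=\mathrm{Hess}(f)/|\nabla f|$ restricted to $T\Sigma(t)$. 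By the soliton equation $\mathrm{Hess}(f)=\tfrac12 g-\mathrm{Ric}$, hence $|\mathrm{II}|\leq c(|\mathrm{Ric}|+1)/\sqrt f$. The Gauss equation then expresses the tangential components of the ambient curvature tensor in terms of the intrinsic curvature of $\Sigma(t)$ and $\mathrm{II}\ast\mathrm{II}$, while the Codazzi equation expresses the mixed (one-normal) components in terms of the covariant derivative of $\mathrm{II}$, i.e. in terms of $\nabla\mathrm{Hess}(f)/|\nabla f|$ plus lower order, and $\nabla\mathrm{Hess}(f)=-\nabla\mathrm{Ric}$ again by the soliton equation. The purely normal components (two or more normal indices) of $\mathrm{Rm}$ are, by the symmetries of the curvature tensor in dimension four, controlled by the sectional curvatures of planes containing $\nu$, which are again recovered from the Gauss/Codazzi data together with the Riccati-type identity $\nu(|\nabla f|)$ and the contracted second Bianchi identities $\nabla_l R_{ijkl}=\nabla_j R_{ik}-\nabla_i R_{jk}$ listed in \eqref{id}.

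First I would set up the orthonormal frame adapted to $\Sigma(t)$, with $e_4=\nu$ and $e_1,e_2,e_3$ tangent to the level set, and record the decomposition of $\mathrm{Rm}$ into the blocks $R_{abcd}$ (all tangential), $R_{abc4}$ (one normal), $R_{a4b4}$ (two normal), for $a,b,c,d\in\{1,2,3\}$. Second, for the tangential block I would invoke the Gauss equation: $R_{abcd}={}^{\Sigma}R_{abcd}-(\mathrm{II}_{ac}\mathrm{II}_{bd}-\mathrm{II}_{ad}\mathrm{II}_{bc})$, and then use that $\Sigma(t)$ is three-dimensional to write ${}^{\Sigma}R_{abcd}$ in terms of ${}^{\Sigma}\mathrm{Ric}$, which in turn is controlled by the ambient Ricci curvature, the mean curvature, and $\mathrm{II}$ through a contracted Gauss equation; all the $\mathrm{II}$ terms contribute $O((|\mathrm{Ric}|^2+1)/f)$ after using $|\mathrm{II}|\leq c(|\mathrm{Ric}|+1)/\sqrt f$. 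Third, for the one-normal block I would use the Codazzi equation $R_{abc4}=|\nabla f|^{-1}\big(\nabla_a \mathrm{Hess}(f)_{bc}-\nabla_b\mathrm{Hess}(f)_{ac}\big)+(\text{terms with }\mathrm{II}\text{ and }\nabla\log|\nabla f|)$, replace $\mathrm{Hess}(f)$ by $\tfrac12 g-\mathrm{Ric}$ so that $\nabla\mathrm{Hess}(f)=-\nabla\mathrm{Ric}$, and bound $|\nabla\log|\nabla f||$ using $\nabla|\nabla f|^2=2\,\mathrm{Hess}(f)(\nabla f,\cdot)=\nabla f-2\mathrm{Ric}(\nabla f,\cdot)$ together with $|\nabla f|\geq\tfrac12\sqrt f$; this produces the $|\nabla\mathrm{Ric}|/\sqrt f$ term and further $O((|\mathrm{Ric}|^2+1)/f)$ and $O(|\mathrm{Ric}|)$ contributions. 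Fourth, for the two-normal block $R_{a4b4}$, which is not directly given by Gauss–Codazzi, I would use the radial Ricci identity: $R_{a4b4}$ appears in $\mathrm{Ric}(e_a,e_b)=\sum_{c}R_{acbc}+R_{a4b4}$, so $R_{a4b4}=\mathrm{Ric}_{ab}-\sum_c R_{acbc}$, and the right-hand side is already controlled by the previous step applied to the tangential block plus $|\mathrm{Ric}|$ itself. Assembling the three blocks gives \eqref{C0}.

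The main obstacle I anticipate is the careful bookkeeping in the Codazzi step: the derivative of the second fundamental form along the level set involves not just $\nabla\mathrm{Hess}(f)$ but also the derivative of the scalar $|\nabla f|^{-1}$ and commutator terms arising from differentiating the projection onto $T\Sigma(t)$, and one must verify that every one of these auxiliary terms is genuinely of order $|\nabla\mathrm{Ric}|/\sqrt f$, $(|\mathrm{Ric}|^2+1)/f$, or $|\mathrm{Ric}|$, with no stray term of worse order such as $|\mathrm{Ric}|^2/\sqrt f$ or $|\nabla\mathrm{Ric}|$. This is where the normalization $|\nabla f|\geq\tfrac12\sqrt f$ on $M\setminus D(r_0)$ is used repeatedly to convert one power of $|\nabla f|^{-1}$ into $f^{-1/2}$, and where the soliton identities for $\nabla_l R_{ijkl}$ and $\nabla_k R_{jk}$ from \eqref{id} are needed to absorb the contracted-Bianchi pieces. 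A secondary subtlety is that the estimate should be pointwise and the constant universal, so one must avoid any integration by parts or maximum-principle argument and keep everything at the level of algebraic identities among tensors evaluated at a single point, which is exactly what the Gauss–Codazzi–Ricci formalism provides.
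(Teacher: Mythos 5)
Your overall strategy -- working on the level sets $\Sigma(t)$, bounding the second fundamental form $h=\mathrm{Hess}(f)/\left\vert \nabla f\right\vert$ by $c\left( \left\vert \mathrm{Ric}\right\vert +1\right)/\sqrt{f}$, and using that a three-manifold's curvature is determined by its Ricci tensor -- is the same as the paper's. The gap is in how you treat the components of $\mathrm{Rm}$ with two normal indices. In your second step you claim that $\mathrm{Ric}^{\Sigma }$ is controlled by the ambient Ricci curvature, the mean curvature and $\mathrm{II}$ through the contracted Gauss equation; but that equation, namely \eqref{C4}, reads $R_{ac}^{\Sigma }=R_{ac}-R_{a4c4}+H\,h_{ac}-h_{ab}h_{bc}$, and the term $R_{a4c4}$ is not ambient Ricci data. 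In your fourth step you then propose to recover $R_{a4b4}$ from the trace identity $R_{a4b4}=R_{ab}-\sum_{c}R_{acbc}$ together with the bound on the tangential block obtained in step two. This is circular: substituting the Gauss equation \eqref{C1} for $\sum_{c}R_{acbc}$ turns your step four into exactly the contracted Gauss equation \eqref{C4} again, so the two steps are one identity used twice and yield no bound on $R_{a4b4}$. Nor can the loop be closed by tracking constants, since the three-dimensional formula \eqref{C3} and the trace over the tangential directions introduce factors larger than one.

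What is missing is precisely the paper's key observation \eqref{C5}: on a gradient soliton the identity $\nabla _{l}R_{ijkl}=R_{ijkl}f_{l}=\nabla _{j}R_{ik}-\nabla _{i}R_{jk}$ from \eqref{id} bounds, pointwise, every curvature component having at least one index in the direction $\nu =\nabla f/\left\vert \nabla f\right\vert$, namely $\left\vert R_{ijk4}\right\vert =\left\vert R_{ijkl}f_{l}\right\vert /\left\vert \nabla f\right\vert \leq 4\left\vert \nabla \mathrm{Ric}\right\vert /\sqrt{f}$ on $M\backslash D\left( r_{0}\right)$. Since $R_{a4b4}$ is of this form, both the one-normal and the two-normal blocks are disposed of in one line, and it is this bound, inserted into \eqref{C4}, that controls $\mathrm{Ric}^{\Sigma }$ and then, via \eqref{C3} and \eqref{C1}, the tangential block. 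Your Codazzi computation for $R_{abc4}$ is workable but unnecessary (after commuting derivatives of $f$ it reduces to the same identity), while the ``Riccati-type'' step you gesture at for $R_{a4b4}$ is never made precise; as written, the proposal does not prove \eqref{C0}.
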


\begin{proof}
We work on $\Sigma :=\Sigma \left( t\right) ,$ $t\geq r_{0}.$ By the Gauss
curvature equation, for an orthonormal frame $\left\{
e_{1},e_{2},e_{3}\right\} $ tangent to $\Sigma ,$ the intrinsic Riemann
curvature tensor $R_{abcd}^{\Sigma }$ of $\Sigma $ is given by

\begin{equation}
R_{abcd}^{\Sigma }=R_{abcd}+h_{ac}h_{bd}-h_{ad}h_{bc},  \label{C1}
\end{equation}%
where $R_{abcd}=\mathrm{Rm}\left( e_{a},e_{b},e_{c},e_{d}\right) $ is the
Riemann curvature tensor of $M$ and $h_{ab}$ the second fundamental form of $%
\Sigma $. In what follows, the indices $a,b,c,d \in \{1,2,3\}$ and $i,j,k,
l\in \{1,2,3,4\}.$ Since $\Sigma =\left\{ f=t\right\} ,$ we have

\begin{equation*}
h_{ab}=\frac{f_{ab}}{\left\vert \nabla f\right\vert }.
\end{equation*}%
Using the fact that $\left\vert \nabla f\right\vert \geq \frac{1}{2}\sqrt{f}$
on $M\backslash D\left( r_{0}\right) $ and $\mathrm{Ric}(e_{a},e_{b})+f_{ab}=%
\frac{1}{2}\delta _{ab},$ we have

\begin{equation}
\left\vert h_{ab}\right\vert \leq \frac{c}{\sqrt{f}}\left( \left\vert 
\mathrm{Ric}\right\vert +1\right) .  \label{C2}
\end{equation}%
Since $\Sigma $ is a three dimensional manifold, its Riemann curvature is
determined by its Ricci curvature $\mathrm{Ric}^{\Sigma }.$ 
\begin{eqnarray}
R_{abcd}^{\Sigma } &=&\left( R_{ac}^{\Sigma }g_{bd}-R_{ad}^{\Sigma
}g_{bc}+R_{bd}^{\Sigma }g_{ac}-R_{bc}^{\Sigma }g_{ad}\right)  \label{C3} \\
&&-\frac{S^{\Sigma }}{2}\left( g_{ac}g_{bd}-g_{ad}g_{bc}\right),  \notag
\end{eqnarray}%
where $S^{\Sigma }$ is the scalar curvature of $\Sigma $ and $R_{ab}^{\Sigma
}:=\mathrm{Ric}^{\Sigma }\left( e_{a},e_{b}\right) $.

By tracing (\ref{C1}) we get

\begin{equation}
R_{ac}^{\Sigma }=R_{ac}-R_{a4c4}+H\,h_{ac}-h_{ab}\,h_{bc},  \label{C4}
\end{equation}
where $R_{a4c4}=\mathrm{Rm}\left( e_{a},\nu ,e_{c},\nu \right) $ with $\nu =%
\frac{\nabla f}{\left\vert \nabla f\right\vert }$ being the normal vector of 
$\Sigma $. Tracing this one more time, we have

\begin{equation*}
S^{\Sigma }=S-2R_{44}+H^{2}-\left\vert h\right\vert ^{2}.
\end{equation*}

Hence, by (\ref{C2}) and $S\leq 2\,\left\vert \mathrm{Ric}\right\vert ,$ we
see that

\begin{equation*}
\left\vert S^{\Sigma }\right\vert \leq c\left( \frac{\left\vert \mathrm{Ric}%
\right\vert ^{2}+1}{f}+\left\vert \mathrm{Ric}\right\vert \right)
\end{equation*}%
for some constant $c$. We now observe that by (\ref{id}) we have an estimate

\begin{equation}
\left\vert R_{ijk4}\right\vert =\frac{1}{\left\vert \nabla f\right\vert }%
\left\vert R_{ijkl}f_{l}\right\vert \leq 4\frac{\left\vert \nabla \mathrm{Ric%
}\right\vert }{\sqrt{f}}.  \label{C5}
\end{equation}

Using this in (\ref{C4}) implies that 
\begin{equation*}
\left\vert R_{ac}^{\Sigma }\right\vert \leq c\left( \frac{\left\vert \nabla 
\mathrm{Ric}\right\vert }{\sqrt{f}}+\frac{\left\vert \mathrm{Ric}\right\vert
^{2}+1}{f}+\left\vert \mathrm{Ric}\right\vert \right) .
\end{equation*}%
Hence, we conclude from (\ref{C3}) and (\ref{C1}) that 
\begin{equation*}
\left\vert R_{abcd}\right\vert \leq c\left( \frac{\left\vert \nabla \mathrm{%
Ric}\right\vert }{\sqrt{f}}+\frac{\left\vert \mathrm{Ric}\right\vert ^{2}+1}{%
f}+\left\vert \mathrm{Ric}\right\vert \right) .
\end{equation*}%
Together with (\ref{C5}), this proves the proposition.
\end{proof}

We now establish the following lemma. It is inspired by Hamilton's work in 
\cite{H1}.

\begin{lemma}
\label{DI}Let $\left( M,g,f\right) $ be a four dimensional shrinking
gradient Ricci soliton with bounded scalar curvature. Then, for any $0<a<1,$
the function $u:=\left\vert \mathrm{Ric}\right\vert ^{2}S^{-a}$ verifies the
differential inequality 
\begin{equation*}
\Delta _{f}u\geq \left( 2a-\frac{c}{1-a}\frac{S}{f}\right)
u^{2}S^{a-1}-c\,u^{\frac{3}{2}}\,S^{\frac{a}{2}}-c\,u
\end{equation*}%
on $M\backslash D\left( r_{0}\right) ,$ for some $r_{0}>0$ which \ depends
only on $A$. Here, $c>0$ is a universal constant .
\end{lemma}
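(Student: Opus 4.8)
The plan is to compute $\Delta_f u$ directly from the Bochner-type formulas in~(\ref{id}) and the evolution equation $\Delta_f S = S - 2|\mathrm{Ric}|^2$, treating $u = |\mathrm{Ric}|^2 S^{-a}$ as a product and collecting the good and bad terms. First I would record the two basic identities: from $\Delta_f R_{ij} = R_{ij} - 2R_{ikjl}R_{kl}$ one gets the Bochner formula
\[
\Delta_f |\mathrm{Ric}|^2 = 2|\nabla \mathrm{Ric}|^2 + 2|\mathrm{Ric}|^2 - 4R_{ikjl}R_{kl}R_{ij},
\]
and from~(\ref{id}) one has $\Delta_f S^{-a} = -a S^{-a-1}\Delta_f S + a(a+1)S^{-a-2}|\nabla S|^2 = -aS^{-a-1}(S - 2|\mathrm{Ric}|^2) + a(a+1)S^{-a-2}|\nabla S|^2$. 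Then expand
\[
\Delta_f u = S^{-a}\,\Delta_f|\mathrm{Ric}|^2 + |\mathrm{Ric}|^2\,\Delta_f S^{-a} + 2\langle \nabla |\mathrm{Ric}|^2,\nabla S^{-a}\rangle .
\]
The leading positive term is $2a S^{-a-1}|\mathrm{Ric}|^4 = 2a\,u^2 S^{a-1}$, coming from the $+2|\mathrm{Ric}|^2$ part of $\Delta_f S^{-a}$ multiplied by $|\mathrm{Ric}|^2$; this is the term one wants to dominate everything else.

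The main work is to show the remaining terms are absorbable. The cross term $2\langle\nabla|\mathrm{Ric}|^2,\nabla S^{-a}\rangle$ is the delicate one: I would bound it using $|\nabla S| \le c|\mathrm{Ric}||\nabla\mathrm{Ric}|$ (which follows from $\tfrac12\nabla_j S = \nabla_k R_{jk}$, hence $|\nabla S| \le 2|\nabla\mathrm{Ric}|$, and also $|\nabla S|\le c|\mathrm{Ric}|/\sqrt f$ is not needed but $S\le 2|\mathrm{Ric}|$ is) together with the gradient estimate $|\nabla|\mathrm{Ric}|^2| \le 2|\mathrm{Ric}||\nabla\mathrm{Ric}|$, and then apply Cauchy--Schwarz/Young's inequality to trade it against the positive $2S^{-a}|\nabla\mathrm{Ric}|^2$ term sitting in $S^{-a}\Delta_f|\mathrm{Ric}|^2$ and a fraction of the main term $2au^2S^{a-1}$; this is precisely where the factor $\tfrac{1}{1-a}$ and the quantity $S/f$ enter, since after completing the square one is left with a term of size $\tfrac{c}{1-a}S^{-a-1}|\mathrm{Ric}|^4 \cdot \tfrac{1}{\,?\,}$ — here I would use that $S \le A$ and, crucially, that the curvature estimate of Proposition~\ref{Curv} lets me replace the stray $|\nabla\mathrm{Ric}|^2/f$-type contributions. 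Actually the cleanest route: the term $\tfrac{S}{f}$ appears because on $M\setminus D(r_0)$ we have $|\nabla f|^2 \le f$, and the relevant bad contribution from the cross term, after Young, is of the form $\tfrac{c}{1-a}\cdot\tfrac{|\nabla S|^2}{S}\,|\mathrm{Ric}|^2 S^{-a-1}$, and one bounds $\tfrac{|\nabla S|^2}{S}$ by a multiple of $\tfrac{S}{f}\,|\mathrm{Ric}|^2$ using $|\nabla S|\le 2|\mathrm{Ric}|f^{-1/2}\cdot(\text{something})$ — more carefully, $|\nabla_j S| = 2|R_{jk}f_k| \le 2|\mathrm{Ric}||\nabla f|$, so $|\nabla S|^2 \le 4|\mathrm{Ric}|^2 f$, which is the wrong direction, so instead one must use $|\nabla S|\le 2|\nabla\mathrm{Ric}|$ and absorb into the $|\nabla\mathrm{Ric}|^2$ term, with the $S/f$ appearing from a different source, namely from estimating $R_{ikjl}R_{kl}R_{ij}$.

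For the curvature term $-4R_{ikjl}R_{kl}R_{ij}$ appearing (with signs) in both $S^{-a}\Delta_f|\mathrm{Ric}|^2$ and in $|\mathrm{Ric}|^2\Delta_f S^{-a}$ via the $2|\mathrm{Ric}|^2$ factor, I would invoke Proposition~\ref{Curv} to bound $|\mathrm{Rm}| \le c\big(\tfrac{|\nabla\mathrm{Ric}|}{\sqrt f} + \tfrac{|\mathrm{Ric}|^2+1}{f} + |\mathrm{Ric}|\big)$, so that
\[
|R_{ikjl}R_{kl}R_{ij}| \le |\mathrm{Rm}|\,|\mathrm{Ric}|^2 \le c\Big(\frac{|\nabla\mathrm{Ric}|}{\sqrt f} + \frac{|\mathrm{Ric}|^2+1}{f} + |\mathrm{Ric}|\Big)|\mathrm{Ric}|^2 .
\]
Multiplying by $S^{-a}$ and substituting $|\mathrm{Ric}|^2 = uS^a$: the $|\mathrm{Ric}|$-term gives $cuS^a\cdot|\mathrm{Ric}| \le cu S^{a}\cdot\sqrt{uS^a}=c\,u^{3/2}S^{3a/2}\le c\,u^{3/2}S^{a/2}$ (using $S\le A$), matching the claimed $-c\,u^{3/2}S^{a/2}$ term; the $\tfrac{|\mathrm{Ric}|^2+1}{f}$-term gives $c\,S^{-a}|\mathrm{Ric}|^4/f + c\,S^{-a}|\mathrm{Ric}|^2/f = \tfrac{c}{f}u^2S^{a-1}\cdot S^{?}$ — here one sees the $S/f$ factor: $cS^{-a}|\mathrm{Ric}|^4/f = c\tfrac{S}{f}u^2S^{a-1}$ after writing $|\mathrm{Ric}|^4 = u^2S^{2a}$ and $S^{-a}S^{2a}/f = S^a/f = \tfrac{S}{f}S^{a-1}$, giving exactly the $-\tfrac{c}{1-a}\tfrac{S}{f}u^2S^{a-1}$ correction once the $\tfrac{1}{1-a}$ from the cross-term estimate is folded in; and the $\tfrac{|\nabla\mathrm{Ric}|}{\sqrt f}$-term is handled by Young's inequality $\tfrac{|\nabla\mathrm{Ric}|}{\sqrt f}|\mathrm{Ric}|^2 \le \varepsilon|\nabla\mathrm{Ric}|^2 + \tfrac{c}{\varepsilon}\tfrac{|\mathrm{Ric}|^4}{f}$, putting $\varepsilon|\nabla\mathrm{Ric}|^2$ back into the good term and the rest into the $S/f$ term. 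Finally the lower-order $-cu$ term collects everything of size $cS^{-a}|\mathrm{Ric}|^2 = cu$, including $c|\mathrm{Ric}|^2/f\cdot S^{-a}\le cu$ on $M\setminus D(r_0)$.

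The main obstacle I anticipate is the careful bookkeeping of the cross term $2\langle\nabla|\mathrm{Ric}|^2,\nabla S^{-a}\rangle = -2a S^{-a-1}\langle\nabla|\mathrm{Ric}|^2,\nabla S\rangle$: one must split the available positive term $2aS^{-a-1}|\mathrm{Ric}|^4$ into $(2a-\delta)$ and $\delta$ parts, use the $\delta$ part plus $2S^{-a}|\nabla\mathrm{Ric}|^2$ to absorb the cross term via a weighted Cauchy--Schwarz, and track how $\delta$ must be taken comparable to $(1-a)$ (so that as $a\to1$ the coefficient of the good term does not degenerate faster than $\tfrac{1}{1-a}$ blows up the error) — equivalently, one expands $|\nabla\mathrm{Ric} - \alpha\,\mathrm{Ric}\otimes\tfrac{\nabla S}{S}|^2 \ge 0$ for a suitable $\alpha$ and chooses $\alpha$ to kill the cross term, the leftover being $\le \tfrac{c}{1-a}S^{-a-2}|\mathrm{Ric}|^4|\nabla S|^2$, and then $|\nabla S|^2 \le 4|\mathrm{Ric}|^2|\nabla f|^2 \le 4|\mathrm{Ric}|^2 f$ is still the wrong bound, so one instead uses $|\nabla S|^2/S \le c\,S/f\cdot|\mathrm{Ric}|^2$ which does follow: $|\nabla S| = 2|{\rm Ric}(\nabla f,\cdot)| \le 2|\mathrm{Ric}||\nabla f|$ gives $|\nabla S|^2 \le 4|\mathrm{Ric}|^2|\nabla f|^2$, and combined with $|\nabla f|^2 = f - S \le f$ and then $|\nabla S|^2/(S\cdot S) \le 4|\mathrm{Ric}|^2 f/S^2$... this still needs the improved gradient bound $|\nabla S| \le cS/\sqrt f$ valid on solitons, which one should derive or cite; granting it, $\tfrac{|\nabla S|^2}{S} \le \tfrac{cS}{f}$ and the leftover is $\le \tfrac{c}{1-a}\tfrac{S}{f}S^{-a-1}|\mathrm{Ric}|^4 = \tfrac{c}{1-a}\tfrac{S}{f}u^2 S^{a-1}$, completing the estimate. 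Assembling all pieces yields
\[
\Delta_f u \ge \Big(2a - \frac{c}{1-a}\frac{S}{f}\Big)u^2 S^{a-1} - c\,u^{3/2}S^{a/2} - c\,u
\]
on $M\setminus D(r_0)$, as claimed.
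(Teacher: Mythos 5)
Your skeleton is right: expanding $\Delta_f u$ by the product rule, isolating the good term $2a\,u^2S^{a-1}$ coming from $|\mathrm{Ric}|^2\Delta_f S^{-a}$, and feeding Proposition \ref{Curv} into the term $|\mathrm{Rm}|\,|\mathrm{Ric}|^2$ to produce the $-c\,u^{3/2}S^{a/2}$, the $\frac{S}{f}u^2S^{a-1}$ and the $-c\,u$ errors is exactly how the paper proceeds. But the one step you yourself single out as delicate, the cross term $2\langle\nabla S^{-a},\nabla|\mathrm{Ric}|^2\rangle$, is not actually settled in your write-up, and the resolution you finally commit to does not work. Your completed-square variant leaves a remainder of size $\frac{c}{1-a}S^{-a-2}|\mathrm{Ric}|^4|\nabla S|^2$, and to control it you invoke ``the improved gradient bound $|\nabla S|\le cS/\sqrt f$ \dots which one should derive or cite.'' No such bound is available here: the only gradient estimate the paper ever proves is $|\nabla S|\le 2|\nabla\mathrm{Ric}|\le cS$ (Theorem \ref{Scalar}), which comes much later and itself relies on Lemma \ref{DI} through Propositions \ref{Rc}, \ref{Rm} and \ref{nabla_Rm}, so using anything of this sort would be circular; the pointwise soliton identity only gives $|\nabla S|=2|\mathrm{Ric}(\nabla f)|\le 2|\mathrm{Ric}|\sqrt f$, which, as you note, goes the wrong way. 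Your other variant, spending a $\delta$-portion of the main term with $\delta$ comparable to $1-a$, weakens the coefficient below the stated $2a-\frac{c}{1-a}\frac{S}{f}$ and is not what the lemma asserts.

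The point you are missing is that no bound on $|\nabla S|$ is needed at all. Keep $|\nabla S|$ untouched and apply the weighted Young inequality
\[
4a\,|\nabla\mathrm{Ric}|\,|\nabla S|\,S^{-a-1}|\mathrm{Ric}|\;\le\;a(a+1)\,|\nabla S|^2S^{-a-2}|\mathrm{Ric}|^2+\frac{4a}{a+1}\,|\nabla\mathrm{Ric}|^2S^{-a},
\]
with the weights chosen so that the first term on the right is cancelled exactly by the positive term $a(a+1)|\nabla S|^2S^{-a-2}|\mathrm{Ric}|^2$ that you already wrote down inside $|\mathrm{Ric}|^2\Delta_fS^{-a}$, while the second consumes only $\frac{4a}{a+1}<2$ of the Bochner term $2|\nabla\mathrm{Ric}|^2S^{-a}$. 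The surplus $\frac{2(1-a)}{1+a}|\nabla\mathrm{Ric}|^2S^{-a}$ is then precisely what absorbs the term $\frac{c}{\sqrt f}|\nabla\mathrm{Ric}|\,|\mathrm{Ric}|^2S^{-a}$ produced by Proposition \ref{Curv}, at the cost of $\frac{1+a}{8(1-a)}\frac{c^2}{f}|\mathrm{Ric}|^4S^{-a}=\frac{c'}{1-a}\frac{S}{f}\,u^2S^{a-1}$. This is the sole source of both the factor $\frac{1}{1-a}$ and the quantity $S/f$, and the full coefficient $2a$ of $u^2S^{a-1}$ survives intact. With that replacement, the rest of your bookkeeping (the $|\mathrm{Ric}|^3S^{-a}=u^{3/2}S^{a/2}$ term, the $|\mathrm{Ric}|^4/f$ and $|\mathrm{Ric}|^2/f$ terms, and the $-a|\mathrm{Ric}|^2S^{-a}$ term) goes through as you describe.
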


\begin{proof}
Note that by Proposition \ref{Curv} and (\ref{id}) 
\begin{eqnarray}
\Delta _{f}\left\vert \mathrm{Ric}\right\vert ^{2} &\geq &2\left\vert \nabla 
\mathrm{Ric}\right\vert ^{2}-c\left\vert \mathrm{Rm}\right\vert \left\vert 
\mathrm{Ric}\right\vert ^{2}  \label{F1} \\
&\geq &2\left\vert \nabla \mathrm{Ric}\right\vert ^{2}-\frac{c}{\sqrt{f}}%
\left\vert \nabla \mathrm{Ric}\right\vert \left\vert \mathrm{Ric}\right\vert
^{2}  \notag \\
&&-\frac{c}{f}\left\vert \mathrm{Ric}\right\vert ^{4}-c\left\vert \mathrm{Ric%
}\right\vert ^{3}-\frac{c}{f}\left\vert \mathrm{Ric}\right\vert ^{2}.  \notag
\end{eqnarray}%
For $1>a>0$ direct computation gives

\begin{eqnarray}
&&\Delta _{f}\left( \left\vert \mathrm{Ric}\right\vert ^{2}S^{-a}\right)
\label{F2} \\
&=&S^{-a}\Delta _{f}\left( \left\vert \mathrm{Ric}\right\vert ^{2}\right)
+\left\vert \mathrm{Ric}\right\vert ^{2}\Delta _{f}\left( S^{-a}\right)
+2\left\langle \nabla S^{-a},\nabla \left\vert \mathrm{Ric}\right\vert
^{2}\right\rangle  \notag \\
&=&S^{-a}\Delta _{f}\left( \left\vert \mathrm{Ric}\right\vert ^{2}\right)
+2\left\langle \nabla S^{-a},\nabla \left\vert \mathrm{Ric}\right\vert
^{2}\right\rangle  \notag \\
&+&\left\vert \mathrm{Ric}\right\vert ^{2}\left( -aS^{-a}+2a\left\vert 
\mathrm{Ric}\right\vert ^{2}S^{-a-1}+a\left( a+1\right) \left\vert \nabla
S\right\vert ^{2}S^{-a-2}\right).  \notag
\end{eqnarray}

We can estimate 
\begin{eqnarray*}
2\left\langle \nabla S^{-a},\nabla \left\vert \mathrm{Ric}\right\vert
^{2}\right\rangle &\geq &-4a\left\vert \nabla \mathrm{Ric}\right\vert
\left\vert \nabla S\right\vert S^{-a-1}\left\vert \mathrm{Ric}\right\vert \\
&\geq &-a\left( a+1\right) \left\vert \nabla S\right\vert
^{2}S^{-a-2}\left\vert \mathrm{Ric}\right\vert ^{2} \\
&&-\frac{4a}{a+1}\left\vert \nabla \mathrm{Ric}\right\vert ^{2}S^{-a}.
\end{eqnarray*}%
Plugging this in (\ref{F2}) and combining with (\ref{F1}) shows that 
\begin{eqnarray*}
\Delta _{f}\left( \left\vert \mathrm{Ric}\right\vert ^{2}S^{-a}\right) &\geq
&\frac{2\left( 1-a\right) }{1+a}\left\vert \nabla \mathrm{Ric}\right\vert
^{2}S^{-a}-\frac{c}{\sqrt{f}}\left\vert \nabla \mathrm{Ric}\right\vert
\left\vert \mathrm{Ric}\right\vert ^{2}S^{-a}-\frac{c}{f}\left\vert \mathrm{%
Ric}\right\vert ^{4}S^{-a} \\
&&-c\left\vert \mathrm{Ric}\right\vert ^{3}S^{-a}-\frac{c}{f}\left\vert 
\mathrm{Ric}\right\vert ^{2}S^{-a}-a\left\vert \mathrm{Ric}\right\vert
^{2}S^{-a}+2a\left\vert \mathrm{Ric}\right\vert ^{4}S^{-a-1} \\
&\geq &\left( 2a-\frac{c}{1-a}\frac{S}{f}\right) \left\vert \mathrm{Ric}%
\right\vert ^{4}S^{-a-1}-c\left\vert \mathrm{Ric}\right\vert
^{3}S^{-a}-c\left\vert \mathrm{Ric}\right\vert ^{2}S^{-a}.
\end{eqnarray*}%
In the last line, we have used that

\begin{equation*}
\frac{c}{\sqrt{f}}\left\vert \nabla \mathrm{Ric}\right\vert \left\vert 
\mathrm{Ric}\right\vert ^{2}S^{-a}\leq \frac{2\left( 1-a\right) }{1+a}%
\left\vert \nabla \mathrm{Ric}\right\vert ^{2}S^{-a}+\frac{1+a}{8\left(
1-a\right) }\frac{c^2}{f}\left\vert \mathrm{Ric}\right\vert ^{4}S^{-a}.
\end{equation*}
It follows that

\begin{equation*}
\Delta _{f}u\geq \left( 2a-\frac{c}{1-a}\frac{S}{f}\right) u^{2}S^{a-1}
-c\,u^{\frac{3}{2}}S^{\frac{a}{2}}-c\,u.
\end{equation*}%
This proves the result.
\end{proof}

\begin{proposition}
\label{Rc}Let $\left( M,g,f\right) $ be a four dimensional shrinking
gradient Ricci soliton with bounded scalar curvature $S\le A.$ Then

\begin{equation*}
\sup_{M}\frac{\left\vert \mathrm{Ric}\right\vert ^{2}}{S}\leq C
\end{equation*}%
for a constant $C>0$ depending only on $A$ and $\sup_{D\left( r_{0}\right)
}\left\vert \mathrm{Rm}\right\vert $.
\end{proposition}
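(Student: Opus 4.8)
The plan is to feed Lemma~\ref{DI} into a maximum principle. Fix $a\in(0,1)$, say $a=\tfrac12$, and set $u:=|\mathrm{Ric}|^{2}S^{-a}$. Since $S\le A$ while $f\to\infty$ at infinity by (\ref{s})--(\ref{f}), after enlarging $r_{0}$ (still depending only on $A$) we have $2a-\frac{c}{1-a}\frac{S}{f}\ge a$ on $M\setminus D(r_{0})$; then, using $S^{a-1}\ge A^{a-1}$ and $S^{a/2}\le A^{a/2}$ and absorbing the $u^{3/2}$ and $u$ terms by Young's inequality, Lemma~\ref{DI} becomes
\begin{equation*}
\Delta_{f}u\ge c_{1}u^{2}-c_{2}u\qquad\text{on }M\setminus D(r_{0}),
\end{equation*}
with $c_{1},c_{2}>0$ depending only on $A$. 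At an interior maximum of $u$ on a compact piece of $M\setminus D(r_{0})$ one has $\Delta_{f}u\le0$, hence $u\le c_{2}/c_{1}$ there; on the compact core $D(r_{0})$ one has $u\le(\sup_{D(r_{0})}|\mathrm{Ric}|^{2})(\inf_{D(r_{0})}S)^{-a}<\infty$ since $S>0$, and this is the point at which $\sup_{D(r_{0})}|\mathrm{Rm}|$ enters, via $|\mathrm{Ric}|\le c|\mathrm{Rm}|$. Granting a suitable maximum principle at infinity for $\Delta_{f}$ on the soliton, one concludes $\sup_{M}u<\infty$, so $|\mathrm{Ric}|^{2}=uS^{a}\le(\sup_{M}u)A^{a}$ and $|\mathrm{Ric}|$ is bounded on $M$ in terms of $A$ and $\sup_{D(r_{0})}|\mathrm{Rm}|$.

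To upgrade this to the asserted estimate I would first bootstrap $|\mathrm{Rm}|$ to a bounded quantity: with $|\mathrm{Ric}|$ bounded, Proposition~\ref{Curv} gives $|\mathrm{Rm}|\le c(|\nabla\mathrm{Ric}|f^{-1/2}+C)$ on $M\setminus D(r_{0})$, and inserting a polynomial growth bound for $|\nabla\mathrm{Ric}|$ and iterating --- the exponent improving at each step because $\sqrt f\sim r$ appears in the denominator --- yields $|\mathrm{Rm}|\le B'$ on all of $M$, after which a Shi-type interior estimate bounds $|\nabla\mathrm{Ric}|$ as well. I would then run a second maximum principle, this time directly for $v:=|\mathrm{Ric}|^{2}-\mu S$: by (\ref{id}), $\Delta_{f}|\mathrm{Ric}|^{2}\ge2|\mathrm{Ric}|^{2}-cB'|\mathrm{Ric}|^{2}$ and $\Delta_{f}S=S-2|\mathrm{Ric}|^{2}$, so $\Delta_{f}v\ge(2-cB'+2\mu)|\mathrm{Ric}|^{2}-\mu S=(2-cB'+2\mu)v+\mu S(1-cB'+2\mu)$. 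Choosing $\mu$ large enough (in terms of $A$, $B'$ and the positive lower bound of $S$ on $D(r_{0})$) that $1-cB'+2\mu\ge0$ and $|\mathrm{Ric}|^{2}\le\mu S$ on $D(r_{0})$, the function $v$ is bounded above (as $|\mathrm{Ric}|$ is bounded), is $\le0$ on $D(r_{0})$, and wherever $v>0$ the above inequality forces $\Delta_{f}v\ge(2-cB'+2\mu)v>0$; the maximum principle at infinity for $\Delta_{f}$ then gives $v\le0$ on $M$, that is $|\mathrm{Ric}|^{2}\le\mu S$, which is the assertion with $C=\mu$.

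The crux of the whole argument is the passage from pointwise differential inequalities to genuinely global bounds over the noncompact $M$: one has no a priori global control on $u$ or $v$. This is exactly where the special structure of shrinking gradient Ricci solitons is indispensable --- the finiteness of the weighted volume $\int_{M}e^{-f}$, the strict positivity $\mathrm{Ric}+\mathrm{Hess}(f)=\tfrac12 g>0$, the quadratic growth (\ref{f}) of $f$ (which makes $S/f\to0$ and thereby neutralizes the dangerous coefficient in Lemma~\ref{DI}), and the polynomial growth of the curvature, of the type coming from \cite{MWa}, needed to legitimately apply the maximum principle at infinity. The Young-type absorptions, the choice of $\mu$, and the curvature bootstrap through Proposition~\ref{Curv} are the more routine, though still delicate, remaining ingredients.
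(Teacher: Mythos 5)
Your plan starts from Lemma~\ref{DI} as the paper does, but then diverges: the paper proves the proposition directly from that lemma by taking the scale-dependent exponent $a=1-C/R$, localizing with a cutoff $\phi(f)$ supported in $D(3R)\setminus D(R/2)$ so that $G=u\phi^{2}$ attains an interior maximum, and finally using the Chow--Lu--Yang bound $Sf\geq c$ to control the factor $S^{a-1}=S^{-C/R}$ on $D(2R)$; you instead fix $a$, which only yields $|\mathrm{Ric}|^{2}\leq CS^{a}$, i.e.\ a bound on $|\mathrm{Ric}|$, and then try to upgrade to the linear estimate through the comparison $v=|\mathrm{Ric}|^{2}-\mu S$ using $\Delta_{f}S=S-2|\mathrm{Ric}|^{2}$. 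That final comparison is sound once one knows $|\mathrm{Rm}|\leq B'$ on all of $M$, and this is where your argument has a genuine gap: you obtain the bound on $|\mathrm{Rm}|$ by ``inserting a polynomial growth bound for $|\nabla\mathrm{Ric}|$ and iterating,'' but no such growth bound is established, and the iteration itself does not close. Even granting an a priori bound $|\mathrm{Rm}|\leq cf^{k}$, Shi-type interior estimates give $|\nabla\mathrm{Ric}|\leq cf^{3k/2}$, and Proposition~\ref{Curv} then returns $|\mathrm{Rm}|\leq c\,f^{3k/2-1/2}$; the exponent map $k\mapsto \tfrac{3}{2}k-\tfrac{1}{2}$ improves matters only when $k<1$ and stalls at the fixed point $k=1$, which is exactly the order one expects a priori from \cite{MWa}. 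So the claimed ``exponent improving at each step'' fails precisely in the relevant regime. The paper's resolution of this step is Theorem~\ref{Main1}: with $|\mathrm{Ric}|$ bounded, one applies the localized maximum principle not to $|\mathrm{Rm}|$ alone but to $|\mathrm{Rm}|+|\mathrm{Ric}|^{2}$, for which (\ref{F1}) and Proposition~\ref{Curv} give $\Delta_{f}\left(|\mathrm{Rm}|+|\mathrm{Ric}|^{2}\right)\geq \tfrac{1}{2}\left(|\mathrm{Rm}|+|\mathrm{Ric}|^{2}\right)^{2}-c$; some device of this kind is indispensable for your second half, and your sketch as written would not deliver $B'$.

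A second, more technical, gap is the repeated appeal to a ``maximum principle at infinity'' for $\Delta_{f}$. In your first step it is applied to $u=|\mathrm{Ric}|^{2}S^{-a}$, which at that stage has no a priori upper bound or growth control, so the weak/Omori--Yau principles you are implicitly invoking do not apply as stated; and an ``interior maximum on a compact piece'' gives nothing if the maximum sits on the outer boundary of that piece. The standard fix is the one the paper uses throughout: multiply by a compactly supported cutoff $\phi(f)$, with $|\nabla\phi|\leq c/\sqrt{R}$ and $|\Delta_{f}\phi|\leq c$ coming from (\ref{delta}), so that $u\phi^{2}$ attains an interior maximum where the differential inequality can be read off. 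In your second step the principle is applied to a bounded-above function and can indeed be justified (again by a cutoff $\psi(f)$, or via stochastic completeness of the weighted manifold), but it must be argued rather than granted. Finally, a small point on constants: your choice of $\mu$ uses a positive lower bound for $S$ on $D(r_{0})$, which is not among the allowed data; it has to be reduced to $A$ and $\sup_{D(r_{0})}|\mathrm{Rm}|$ via $Sf\geq c$ from \cite{CLY}, as the paper does.
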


\begin{proof}
We use Lemma \ref{DI} that

\begin{equation*}
\Delta _{f}u\geq \left( 2a-\frac{c}{1-a}\frac{S}{f}\right)
u^{2}S^{a-1}-c\,u^{\frac{3}{2}}S^{\frac{a}{2}}-c\,u
\end{equation*}%
on $M\backslash D\left( r_{0}\right),$ where $u:=\left\vert \mathrm{Ric}%
\right\vert ^{2}S^{-a}$ and $0<a<1.$

For $R>2r_{0},\,$let $\phi $ be a smooth non-negative function defined on
the real line so that $\phi \left( t\right) =1$ for $R\leq t\leq 2R$ and $%
\phi \left( t\right) =0$ for $t\leq \frac{R}{2}$ and for $t\geq 3R$. We may
choose $\phi $ so that 
\begin{equation*}
t^{2}\left( \left\vert \phi ^{\prime }\right\vert ^{2}\left( t\right)
+\left\vert \phi ^{\prime \prime }\right\vert \left( t\right) \right) \leq c.
\end{equation*}%
We use $\phi \left( f\left( x\right) \right) $ as a cut-off function with
support in $D\left( 3R\right) \backslash D\left( \frac{R}{2}\right) .$ Note
that 
\begin{eqnarray*}
\left\vert \nabla \phi \right\vert &\leq &\frac{c}{\sqrt{R}} \\
\left\vert \Delta _{f}\phi \right\vert &\leq &c
\end{eqnarray*}
for a universal constant $c>0.$ Here, for the second inequality we have used
(\ref{delta}) that $\Delta _{f}\left( f\right) =2-f.$

Direct computation gives

\begin{eqnarray*}
\phi ^{2}\Delta _{f}\left( u\phi ^{2}\right) &=&\phi ^{4}\left( \Delta
_{f}u\right) +\phi ^{2}u\left( \Delta _{f}\phi ^{2}\right) +2\phi
^{2}\left\langle \nabla u,\nabla \phi ^{2}\right\rangle \\
&\geq &\left( 2a-\frac{c}{1-a}\frac{S}{f}\right) u^{2}S^{a-1}\phi ^{4}-cu^{%
\frac{3}{2}}S^{\frac{a}{2}}\phi ^{3} \\
&&-cu\phi ^{2}+2\left\langle \nabla \left( u\phi ^{2}\right) ,\nabla \phi
^{2}\right\rangle .
\end{eqnarray*}%
Since $\phi $ has support in $D\left( 3R\right) \backslash D\left( \frac{R}{2%
}\right) ,$ we know that $f\geq \frac{1}{2}R$ on the support of $\phi $.
Hence, we may choose $a:=1-\frac{C}{R}$ with a sufficiently large constant $%
C>0$ such that 
\begin{equation*}
2a-\frac{c}{1-a}\frac{S}{f}\geq 1
\end{equation*}%
on the support of $\phi $. As a result, the function $G:=u\phi ^{2}$
verifies 
\begin{equation}
\text{ }\phi ^{2}\Delta _{f}G\geq S^{a-1}G^{2}-cG^{\frac{3}{2}%
}-cG+2\left\langle \nabla G,\nabla \phi ^{2}\right\rangle .  \label{e1}
\end{equation}%
Since $a<1$ and $S^{a-1}\geq A^{a-1},$ the maximum principle implies that

\begin{equation*}
G\leq c,
\end{equation*}%
for some constant $c$ depending on $A$. Hence, on $D\left( 2R\right)
\backslash D\left( R\right) ,$ 
\begin{equation*}
\frac{\left\vert \mathrm{Ric}\right\vert ^{2}}{S}=GS^{a-1}\leq cS^{a-1}.
\end{equation*}%
Let us recall a result in \cite{CLY} that there exists a constant $c>0$ so
that $Sf\geq c$ on $M$. In our context, this constant has the dependency as
stated in the conclusion of the proposition.

Since $a-1=-\frac{C}{R}$ and $S\geq \frac{c}{R}$ on $D\left( 2R\right) $, it
follows that $S^{a-1}\leq c$ on $D\left( 2R\right) \backslash D\left(
R\right) $. Therefore, 
\begin{equation*}
\frac{\left\vert \mathrm{Ric}\right\vert ^{2}}{S}\leq c
\end{equation*}%
on $D\left( 2R\right) \backslash D\left( R\right) $. Since $R$ is arbitrary,
this proves the result.
\end{proof}

Proposition \ref{Rc} implies in particular that the Ricci curvature is
bounded on $M$. We now prove that the curvature of four dimensional
shrinking gradient Ricci solitons with bounded scalar curvature is bounded.

\begin{theorem}
\label{Main1}Let $\left( M,g,f\right) $ be a four dimensional shrinking
Ricci soliton with bounded scalar curvature $S\le A$. Then the Riemann
curvature tensor and its covariant derivative are bounded in norm as well.
More precisely, 
\begin{equation*}
\sup_{M}\left( \left\vert \mathrm{Rm}\right\vert +\left\vert \nabla \mathrm{%
Rm}\right\vert \right) \leq C,
\end{equation*}%
where $C>0$ is a constant depending only on $A$ and $\sup_{D\left(
r_{0}\right) }\left\vert \mathrm{Rm}\right\vert $.
\end{theorem}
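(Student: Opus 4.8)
The plan is to argue by contradiction through a blow-up analysis at infinity. By Proposition \ref{Rc} and the hypothesis $S\le A$, the Ricci curvature is bounded, $|\mathrm{Ric}|\le C_{0}$ on $M$. Substituting this into Proposition \ref{Curv} gives, on $M\setminus D(r_{0})$,
\[
|\mathrm{Rm}|\leq c\Big(\frac{|\nabla\mathrm{Ric}|}{\sqrt{f}}+1\Big),
\]
so it suffices to bound $|\mathrm{Rm}|$; the bound on $|\nabla\mathrm{Rm}|$ then follows from standard Shi-type local derivative estimates, since a gradient shrinking soliton with bounded curvature generates a self-similar Ricci flow with curvature bounded on a fixed time interval, whence all covariant derivatives of $\mathrm{Rm}$ are bounded as well. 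As $|\mathrm{Rm}|$ is automatically bounded on the compact set $D(2r_{0})$, it is enough to bound it on $M\setminus D(2r_{0})$.

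Suppose $\sup_{M}|\mathrm{Rm}|=\infty$. Pick $x_{i}\to\infty$ with $Q_{i}:=|\mathrm{Rm}|(x_{i})\to\infty$, and, after Hamilton's point picking, assume $|\mathrm{Rm}|\leq 4Q_{i}$ on the geodesic ball $B(x_{i},\,i\,Q_{i}^{-1/2})$. Consider the rescaled solitons $(M,g_{i},f)$ with $g_{i}:=Q_{i}\,g$. They are shrinking with $\lambda_{i}=(2Q_{i})^{-1}\to0$, satisfy $|\mathrm{Rm}_{g_{i}}|\leq4$ on $B_{g_{i}}(x_{i},i)$ and $|\mathrm{Rm}_{g_{i}}|(x_{i})=1$, and, since $|\mathrm{Ric}|^{2}\leq C\,S\leq CA$ by Proposition \ref{Rc}, they obey $|\mathrm{Ric}_{g_{i}}|^{2}=Q_{i}^{-2}|\mathrm{Ric}|^{2}\to0$. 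By the $\kappa$-noncollapsing of shrinking Ricci solitons, together with these uniform curvature bounds and the higher derivative bounds furnished by Shi's estimates, a subsequence converges in the pointed $C^{\infty}$ Cheeger--Gromov sense to a complete four dimensional manifold $(N,g_{\infty},x_{\infty})$ which is Ricci-flat, $\kappa$-noncollapsed, of bounded curvature, and satisfies $|\mathrm{Rm}_{g_{\infty}}|(x_{\infty})=1$; in particular $N$ is not flat.

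The crux is to rule out such a limit, and here the potential function enters through the scale $\tau_{i}:=f(x_{i})/Q_{i}$. Along a subsequence on which $\tau_{i}\to\tau_{\infty}\in(0,\infty)$, set $h_{i}:=f-f(x_{i})$, so $h_{i}(x_{i})=0$. On bounded $g_{i}$-balls, $|\nabla_{g_{i}}h_{i}|^{2}_{g_{i}}=Q_{i}^{-1}(f-S)\to\tau_{\infty}$ (since $f/f(x_{i})\to1$ there, $f(x_{i})\to\infty$, and $S/Q_{i}\to0$), while $\mathrm{Hess}_{g_{i}}h_{i}=(2Q_{i})^{-1}g_{i}-\mathrm{Ric}_{g_{i}}\to0$; as $\nabla^{k}h_{i}=-\nabla^{k-2}\mathrm{Ric}_{g_{i}}$ for $k\ge2$, the $h_{i}$ subconverge in $C^{\infty}_{loc}$ to a nonconstant function $h_{\infty}$ on $N$ with $|\nabla h_{\infty}|^{2}\equiv\tau_{\infty}$ and $\mathrm{Hess}\,h_{\infty}\equiv0$. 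Then $\nabla h_{\infty}$ is a nonzero parallel field, so $N$ splits isometrically as $\mathbb{R}\times N^{3}$ with $N^{3}$ three dimensional Ricci-flat, hence flat, and $N$ is flat --- contradicting $|\mathrm{Rm}_{g_{\infty}}|(x_{\infty})=1$. When $\tau_{i}\to\infty$ the same conclusion follows using $h_{i}:=\tau_{i}^{-1/2}(f-f(x_{i}))$, whose $g_{i}$-gradient limits to a unit parallel field and whose Hessian tends to $0$.

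It remains to rule out the scenario $\tau_{i}\to0$, in which the curvature concentrates on a scale strictly finer than the one on which $f$ varies; equivalently, one must establish the a priori bound $|\mathrm{Rm}|\leq c\,f$ on $M$. I expect this to be the main obstacle. To handle it I would work out a higher-derivative analogue of Proposition \ref{Curv}: differentiating the identity $R_{ijkl}f_{l}=\nabla_{j}R_{ik}-\nabla_{i}R_{jk}$ from (\ref{id}), one controls the derivative of $\mathrm{Rm}$ against $\nabla f$ by $f^{-1/2}\big(|\nabla^{2}\mathrm{Ric}|+|\mathrm{Rm}|\big)$, and combining this with the Gauss--Codazzi relations on the level sets bounds the full $|\nabla\mathrm{Rm}|$ in terms of $f^{-1/2}|\nabla^{2}\mathrm{Ric}|$, $|\nabla\mathrm{Ric}|$, and lower order terms. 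Feeding this, the Bochner formula $\Delta_{f}|\mathrm{Rm}|^{2}\geq2|\nabla\mathrm{Rm}|^{2}+2|\mathrm{Rm}|^{2}-c|\mathrm{Rm}|^{3}$, and Proposition \ref{Curv} (to absorb the cubic term using that $f$ is large at infinity) into the cut-off maximum principle argument of Proposition \ref{Rc}, applied to $|\mathrm{Rm}|^{2}f^{-b}$ for a suitable exponent $b$, should yield $|\mathrm{Rm}|\le c\,f$ and thereby complete the proof.
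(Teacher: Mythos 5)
Your reduction to bounding $|\mathrm{Rm}|$ is fine, and the splitting argument in the blow-up limit is correct when the curvature scale is comparable to or smaller than $f$ at the basepoints. But the proof is not complete: the decisive case $\tau_i=f(x_i)/Q_i\to 0$ --- precisely the scenario in which $|\mathrm{Rm}|$ would blow up faster than $f$, which is what the theorem must exclude --- is left open, and you acknowledge this yourself. The sketch you offer to close it (a higher-derivative analogue of Proposition \ref{Curv} plus a maximum principle for $|\mathrm{Rm}|^2f^{-b}$, ``should yield $|\mathrm{Rm}|\le c\,f$'') is not carried out, and it does not identify the mechanism needed to absorb the error term $\frac{c}{f}\,|\nabla\mathrm{Ric}|^2\,|\mathrm{Rm}|$ that appears as soon as Proposition \ref{Curv} is fed into the Bochner inequality for $|\mathrm{Rm}|$; without a source of a good $+|\nabla\mathrm{Ric}|^2$ term, the maximum principle does not close. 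There are also secondary points in the blow-up step that would need justification (the $\kappa$-noncollapsing at scale $Q_i^{-1/2}$ around points going to infinity, and the local derivative bounds used both for compactness and for the convergence of $h_i$), but these are standard compared with the unresolved case.

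For comparison, the paper avoids blow-up entirely and bounds $|\mathrm{Rm}|$ by a direct elliptic maximum-principle argument with exactly the ingredients you already have. By Proposition \ref{Rc} and Proposition \ref{Curv}, $|\mathrm{Rm}|^2\le c\bigl(\frac{1}{f}|\nabla\mathrm{Ric}|^2+1\bigr)$, hence
\begin{equation*}
\Delta_f|\mathrm{Rm}|\ \ge\ |\mathrm{Rm}|^2-\frac{c}{f}|\nabla\mathrm{Ric}|^2-c ,
\end{equation*}
while the Bochner formula for the Ricci tensor gives $\Delta_f|\mathrm{Ric}|^2\ge|\nabla\mathrm{Ric}|^2-c$ (using that $|\mathrm{Ric}|$ and hence, by Proposition \ref{Curv} again, the cubic error are controlled). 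Adding the two, the $|\nabla\mathrm{Ric}|^2$ term from the second inequality absorbs the $\frac{c}{f}|\nabla\mathrm{Ric}|^2$ error for $f$ large, so $v:=|\mathrm{Rm}|+|\mathrm{Ric}|^2$ satisfies $\Delta_f v\ge\frac12 v^2-c$ on $M\setminus D(r_0)$, and the cutoff maximum principle of Proposition \ref{Rc} (cutoff in $f$) bounds $v$. The gradient bound is then obtained the same way from $\Delta_f|\nabla\mathrm{Rm}|\ge-C|\nabla\mathrm{Rm}|$ and $\Delta_f|\mathrm{Rm}|^2\ge2|\nabla\mathrm{Rm}|^2-c$, applied to $|\nabla\mathrm{Rm}|+|\mathrm{Rm}|^2$. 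The missing idea in your argument is precisely this pairing of $|\mathrm{Rm}|$ with $|\mathrm{Ric}|^2$ (or some equivalent device) to supply the good gradient term; without it, neither your blow-up route in the $\tau_i\to0$ case nor your proposed $|\mathrm{Rm}|^2f^{-b}$ estimate goes through as stated.
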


\begin{proof}
We first show that 
\begin{equation}
\sup_{M}\left\vert \mathrm{Rm}\right\vert \leq c\text{.}  \label{R-1}
\end{equation}%
Using (\ref{id}) and the Kato inequality, one sees that 
\begin{equation*}
\Delta _{f}\left\vert \mathrm{Rm}\right\vert \geq -c\left\vert \mathrm{Rm}%
\right\vert ^{2}.
\end{equation*}%
Rewrite this into

\begin{equation}
\Delta _{f}\left\vert \mathrm{Rm}\right\vert \geq \left\vert \mathrm{Rm}%
\right\vert ^{2}-\left( c+1\right) \left\vert \mathrm{Rm}\right\vert ^{2}.
\label{R1}
\end{equation}%
By Proposition \ref{Rc} and Proposition \ref{Curv}, we have

\begin{equation*}
\left\vert \mathrm{Rm}\right\vert ^{2}\leq c\left( \frac{1}{f}\left\vert
\nabla \mathrm{Ric}\right\vert ^{2}+1\right) .
\end{equation*}%
Plugging into (\ref{R1}), we conclude

\begin{equation}
\Delta _{f}\left\vert \mathrm{Rm}\right\vert \geq \left\vert \mathrm{Rm}%
\right\vert ^{2}-\frac{c}{f}\left\vert \nabla \mathrm{Ric}\right\vert ^{2}-c.
\label{R2}
\end{equation}%
On the other hand, we know from (\ref{F1}) that 
\begin{equation}
\Delta _{f}\left\vert \mathrm{Ric}\right\vert ^{2}\geq \left\vert \nabla 
\mathrm{Ric}\right\vert ^{2}-c.  \label{R3}
\end{equation}%
Therefore, combining (\ref{R2}) and (\ref{R3}), we obtain

\begin{eqnarray*}
\Delta _{f}\left( \left\vert \mathrm{Rm}\right\vert +\left\vert \mathrm{Ric}%
\right\vert ^{2}\right) &\geq &\left\vert \mathrm{Rm}\right\vert ^{2}-c \\
&\geq &\frac{1}{2}\left( \left\vert \mathrm{Rm}\right\vert +\left\vert 
\mathrm{Ric}\right\vert ^{2}\right) ^{2}-c.
\end{eqnarray*}%
In other words, the function 
\begin{equation*}
v:=\left\vert \mathrm{Rm}\right\vert +\left\vert \mathrm{Ric}\right\vert ^{2}
\end{equation*}%
satisfies the following differential inequality on $M\backslash D\left(
r_{0}\right).$

\begin{equation*}
\Delta _{f}v\geq \frac{1}{2}v^{2}-c.
\end{equation*}%
Arguing as in Proposition \ref{Rc}, we conclude that $v$ is bounded by a
constant on $M\backslash D\left( r_{0}\right) .$ This implies (\ref{R-1}).

Now we use Shi's derivative estimates to prove that 
\begin{equation}
\left\vert \nabla \mathrm{Rm}\right\vert \leq c  \label{T2}
\end{equation}%
for some constant $c>0$. Note that

\begin{eqnarray*}
\Delta _{f}\left\vert \nabla \mathrm{Rm}\right\vert ^{2} &\geq &2\left\vert
\nabla ^{2}\mathrm{Rm}\right\vert ^{2}-c\left\vert \nabla \mathrm{Rm}%
\right\vert ^{2}\left\vert \mathrm{Rm}\right\vert \\
&\geq &2\left\vert \nabla \left\vert \nabla \mathrm{Rm}\right\vert
\right\vert ^{2}-C\left\vert \nabla \mathrm{Rm}\right\vert ^{2}.
\end{eqnarray*}%
This implies

\begin{equation*}
\Delta _{f}\left\vert \nabla \mathrm{Rm}\right\vert \geq -C\left\vert \nabla 
\mathrm{Rm}\right\vert .
\end{equation*}%
We also know from (\ref{id}) that 
\begin{eqnarray*}
\Delta _{f}\left\vert \mathrm{Rm}\right\vert ^{2} &\geq &2\left\vert \nabla 
\mathrm{Rm}\right\vert ^{2}-c\left\vert \mathrm{Rm}\right\vert ^{3} \\
&\geq &2\left\vert \nabla \mathrm{Rm}\right\vert ^{2}-c\text{. }
\end{eqnarray*}%
Hence,

\begin{equation*}
\Delta _{f}\left( \left\vert \nabla \mathrm{Rm}\right\vert +\left\vert 
\mathrm{Rm}\right\vert ^{2}\right) \geq \left( \left\vert \nabla \mathrm{Rm}%
\right\vert +\left\vert \mathrm{Rm}\right\vert ^{2}\right) ^{2}-c\text{. }
\end{equation*}%
Now a maximum principle argument as above shows that $\left\vert \nabla 
\mathrm{Rm}\right\vert +\left\vert \mathrm{Rm}\right\vert ^{2}$ is bounded
on $M.$ So (\ref{T2}) follows. This proves the theorem.
\end{proof}

\section{Improved curvature estimates}

In this section, we aim to prove Theorem \ref{Main}. First, we establish a
result similar to Proposition \ref{Rc} for the full curvature tensor. We
continue to follow the notations in the previous section.

\begin{proposition}
\label{Rm}Let $\left( M,g,f\right) $ be a four dimensional shrinking
gradient Ricci soliton with bounded scalar curvature $S\le A.$ Then

\begin{equation*}
\sup_{M}\frac{\left\vert \mathrm{Rm}\right\vert ^{2}}{S}\leq C
\end{equation*}%
for a constant $C>0$ depending only on $A$ and $\sup_{D\left( r_{0}\right)
}\left\vert \mathrm{Rm}\right\vert $.
\end{proposition}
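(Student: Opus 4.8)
The plan is to mimic the proof of Proposition \ref{Rc}, but now working directly with the full curvature tensor. By Theorem \ref{Main1} we already know $|\mathrm{Rm}|$ and $|\nabla\mathrm{Rm}|$ are bounded on $M$, which removes all the technical difficulties stemming from a priori unbounded quantities; the real content is the quadratic gain in the differential inequality. First I would set $w:=|\mathrm{Rm}|^{2}S^{-a}$ for $0<a<1$ and compute $\Delta_f w$ using the Bochner-type formula $\Delta_f|\mathrm{Rm}|^{2}\geq 2|\nabla\mathrm{Rm}|^{2}-c\,|\mathrm{Rm}|^{3}$ from (\ref{id}) and the Kato inequality, together with (\ref{id}) for $\Delta_f S = S-2|\mathrm{Ric}|^{2}$. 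The same algebra as in Lemma \ref{DI} gives
\begin{equation*}
\Delta_f w \geq \frac{2(1-a)}{1+a}|\nabla\mathrm{Rm}|^{2}S^{-a} + 2a\,|\mathrm{Rm}|^{2}|\mathrm{Ric}|^{2}S^{-a-1} - a\,|\mathrm{Rm}|^{2}S^{-a} - c\,|\mathrm{Rm}|^{3}S^{-a} + (\text{cross terms}),
\end{equation*}
where the cross term $2\langle\nabla S^{-a},\nabla|\mathrm{Rm}|^{2}\rangle$ is absorbed exactly as before, using the bound $|\nabla S|\leq c$ (a consequence of $\nabla_k R_{jk}=\tfrac12\nabla_j S$, $R_{jk}f_k=\tfrac12\nabla_j S$, and boundedness of $f^{-1/2}$, $|\mathrm{Ric}|$ on $M\setminus D(r_0)$).

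The key new point is to convert the favorable term $2a\,|\mathrm{Rm}|^{2}|\mathrm{Ric}|^{2}S^{-a-1}$ into $2a\,w^{2}S^{a-1}$-type control. This is where I would invoke Proposition \ref{Curv}, which says $|\mathrm{Rm}|\leq c(f^{-1/2}|\nabla\mathrm{Ric}| + f^{-1}(|\mathrm{Ric}|^{2}+1) + |\mathrm{Ric}|)$. Squaring and using Proposition \ref{Rc} ($|\mathrm{Ric}|^{2}\leq CS$) plus boundedness of $|\nabla\mathrm{Ric}|$ and of $|\mathrm{Rm}|$, one gets $|\mathrm{Rm}|^{2}\leq \frac{c}{f}|\nabla\mathrm{Ric}|^{2}+c\,S$, hence $|\mathrm{Rm}|^{2}S^{-a}\leq \frac{c}{f}|\nabla\mathrm{Ric}|^{2}S^{-a}+c\,S^{1-a}$. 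The issue is that this involves $|\nabla\mathrm{Ric}|$, not $|\nabla\mathrm{Rm}|$; since $|\nabla\mathrm{Ric}|\leq |\nabla\mathrm{Rm}|$, the gradient term in the differential inequality for $w$ does dominate it provided $R$ (hence $f$ on the annulus) is large, exactly as the term $\frac{c}{\sqrt f}|\nabla\mathrm{Ric}||\mathrm{Ric}|^{2}S^{-a}$ was controlled in Lemma \ref{DI}. So after choosing $a=1-C/R$ with $C$ large, on the support of the cutoff $\phi(f)$ (contained in $D(3R)\setminus D(R/2)$, where $f\geq R/2$) one arranges a coefficient $\geq 1$ in front of $w^{2}S^{a-1}$ and obtains, for $G:=w\phi^{2}$,
\begin{equation*}
\phi^{2}\Delta_f G \geq S^{a-1}G^{2} - c\,G^{3/2} - c\,G + 2\langle\nabla G,\nabla\phi^{2}\rangle .
\end{equation*}

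The maximum principle at an interior maximum of $G$ then forces $G\leq c$ (using $S^{a-1}\geq A^{a-1}$ and $a<1$), so $|\mathrm{Rm}|^{2}/S = G\,S^{a-1}\leq c\,S^{a-1}$ on $D(2R)\setminus D(R)$; invoking the lower bound $Sf\geq c$ from \cite{CLY} and $a-1=-C/R$, $S\geq c/R$ on $D(2R)$ gives $S^{a-1}\leq c$ there, hence $|\mathrm{Rm}|^{2}/S\leq c$ on the annulus. Since $R>2r_0$ is arbitrary and the bound on $D(r_0)$ is part of the hypothesis, the estimate holds on all of $M$. The main obstacle I anticipate is bookkeeping the cross terms and the borrowing of the gradient term so that the constant $C$ in $a=1-C/R$ can be chosen uniformly (depending only on $A$ and $\sup_{D(r_0)}|\mathrm{Rm}|$), and in particular verifying that all the "error" coefficients — $c/f$, $c/\sqrt f$ — are genuinely small on the support of $\phi$; this is precisely parallel to Lemma \ref{DI} and Proposition \ref{Rc}, so it should go through, but it is the step that needs care.
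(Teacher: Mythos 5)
Your argument reaches the correct conclusion, but it takes a far longer route than the paper, and in fact one of your own intermediate steps already finishes the proof. The paper's proof is purely pointwise: by Proposition \ref{Curv}, $|\mathrm{Rm}|^{2}\le c\left(\frac{|\nabla \mathrm{Ric}|^{2}}{f}+\frac{1}{f^{2}}+|\mathrm{Ric}|^{2}\right)$; Theorem \ref{Main1} bounds $|\nabla \mathrm{Ric}|$, Proposition \ref{Rc} gives $|\mathrm{Ric}|^{2}\le CS$, and the Chow--Lu--Yang bound $\frac{1}{f}\le cS$ then yields $|\mathrm{Rm}|^{2}\le c\left(\frac{1}{f}+S\right)\le cS$ directly, with no differential inequality, cutoff, or maximum principle. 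Your displayed estimate $|\mathrm{Rm}|^{2}\le \frac{c}{f}|\nabla \mathrm{Ric}|^{2}+cS$, combined with the two facts you already invoke ($|\nabla \mathrm{Ric}|\le C$ from Theorem \ref{Main1} and $Sf\ge c$ at the end), is exactly this pointwise argument, so the entire Bochner/cutoff/maximum-principle superstructure with $a=1-C/R$ is redundant. If you do wish to run the elliptic route, two points need repair. First, the favorable term produced by $\Delta_{f}S^{-a}$ is $2a\,|\mathrm{Rm}|^{2}|\mathrm{Ric}|^{2}S^{-a-1}$, and Proposition \ref{Curv} only gives $|\mathrm{Ric}|^{2}\ge \frac{1}{c}|\mathrm{Rm}|^{2}-(\text{small errors})$ for a universal constant $c$ that need not be close to $1$; hence the coefficient you can arrange in front of $w^{2}S^{a-1}$ is of size $2a/c$, and it cannot be made $\ge 1$ by choosing $a$ close to $1$ as you claim. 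This is harmless, since any positive coefficient suffices for the maximum principle (at the cost of a different constant), but the statement should be corrected. Second, your justification of $|\nabla S|\le c$ from $\frac{1}{2}\nabla_{j}S=R_{jk}f_{k}$ and bounded Ricci is insufficient: since $|\nabla f|$ grows like $\sqrt{f}$, that identity only gives $|\nabla S|\le c\sqrt{Sf}$. The bound is nevertheless true because $|\nabla S|\le c\,|\nabla \mathrm{Rm}|\le C$ by Theorem \ref{Main1}, or one can avoid it entirely by absorbing the cross term as in Lemma \ref{DI}, using only the $a(a+1)|\nabla S|^{2}S^{-a-2}$ term coming from $\Delta_{f}S^{-a}$.
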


\begin{proof}
According to Proposition \ref{Curv}, 
\begin{eqnarray}
\left\vert \mathrm{Rm}\right\vert ^{2} &\leq &c\left( \frac{\left\vert
\nabla \mathrm{Ric}\right\vert ^{2}}{f}+\frac{c}{f^{2}}+\left\vert \mathrm{%
Ric}\right\vert ^{2}\right)  \label{f1} \\
&\leq &c\left( \frac{1}{f}+S\right)  \notag \\
&\leq & c\,S.  \notag
\end{eqnarray}%
In the second and third line above we have used Proposition \ref{Rc},
Theorem \ref{Main1} and the fact that $\frac{1}{f}\leq cS$ from \cite{CLY},
respectively. This proves the proposition.
\end{proof}

We continue with a similar estimate for the covariant derivative of
curvature.

\begin{proposition}
\label{nabla_Rm}Let $\left( M,g,f\right) $ be a four dimensional shrinking
gradient Ricci soliton with bounded scalar curvature. Then there exists a
constant $C>0$ so that%
\begin{equation*}
\sup_{M}\frac{\left\vert \nabla \mathrm{Rm}\right\vert ^{2}}{S}\leq C.
\end{equation*}
\end{proposition}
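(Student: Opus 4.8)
The plan is to establish the estimate $\left\vert \nabla \mathrm{Rm}\right\vert ^{2}\leq C\,S$ by the same Bochner-type maximum principle technique used in Proposition \ref{Rc}, now applied to the weighted quantity $w:=\left\vert \nabla \mathrm{Rm}\right\vert ^{2}S^{-a}$ for $0<a<1$ chosen close to $1$ on the relevant annular region $D(2R)\setminus D(R/2)$. First I would record the Bochner formula for $\left\vert \nabla \mathrm{Rm}\right\vert ^{2}$, which from the soliton identities in (\ref{id}) gives
\begin{equation*}
\Delta _{f}\left\vert \nabla \mathrm{Rm}\right\vert ^{2}\geq 2\left\vert \nabla ^{2}\mathrm{Rm}\right\vert ^{2}-c\left\vert \mathrm{Rm}\right\vert \left\vert \nabla \mathrm{Rm}\right\vert ^{2}.
\end{equation*}
Since $\left\vert \mathrm{Rm}\right\vert \leq c\sqrt{S}$ by Proposition \ref{Rm} and $S\leq A$, the bad term is controlled by $c\,S^{1/2}\left\vert \nabla \mathrm{Rm}\right\vert^{2}$, so the $\Delta_f$-subsolution structure is just as good as (or better than) the one for $\left\vert \mathrm{Ric}\right\vert^2$ in Lemma \ref{DI}.

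Next I would carry out the computation of $\Delta_f(\left\vert \nabla \mathrm{Rm}\right\vert^{2}S^{-a})$ exactly as in (\ref{F2}), expanding $\Delta_f(S^{-a})=-aS^{-a}+2a\left\vert \mathrm{Ric}\right\vert^{2}S^{-a-1}+a(a+1)\left\vert \nabla S\right\vert^{2}S^{-a-2}$ using (\ref{id}), and absorbing the cross term $2\langle \nabla S^{-a},\nabla \left\vert \nabla \mathrm{Rm}\right\vert^{2}\rangle$ into the good terms $\left\vert \nabla^{2}\mathrm{Rm}\right\vert^{2}S^{-a}$ and $a(a+1)\left\vert \nabla S\right\vert^{2}S^{-a-2}\left\vert \nabla \mathrm{Rm}\right\vert^{2}$ via Cauchy--Schwarz together with the Kato inequality $\left\vert \nabla \left\vert \nabla \mathrm{Rm}\right\vert\right\vert\leq \left\vert \nabla^{2}\mathrm{Rm}\right\vert$. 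This yields, for $w=\left\vert \nabla \mathrm{Rm}\right\vert^{2}S^{-a}$ on $M\setminus D(r_{0})$,
\begin{equation*}
\Delta _{f}w\geq \left( 2a-\frac{c}{1-a}\frac{S}{f}\right) w^{2}S^{a-1}-c\,w,
\end{equation*}
where the linear term now additionally swallows the $c\,S^{1/2}w$ contribution since $S$ is bounded. (One should note the term $2a\left\vert \mathrm{Ric}\right\vert^{2}\left\vert \nabla \mathrm{Rm}\right\vert^{2}S^{-a-1}$ coming from $\Delta_f(S^{-a})$ is nonnegative and may simply be dropped, while $\left\vert \mathrm{Ric}\right\vert^{2}\leq CS$ from Proposition \ref{Rc} also helps to bound various error terms uniformly.)

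Then I would run the cut-off argument verbatim from Proposition \ref{Rc}: fix $R>2r_{0}$, take $\phi=\phi(f(x))$ supported in $D(3R)\setminus D(R/2)$ with $\left\vert \nabla \phi\right\vert\leq cR^{-1/2}$ and $\left\vert \Delta_{f}\phi\right\vert\leq c$, set $G:=w\phi^{2}$, and choose $a:=1-C/R$ so that $2a-\frac{c}{1-a}\frac{S}{f}\geq 1$ on $\operatorname{supp}\phi$ since $f\geq R/2$ there. The maximum principle applied at an interior maximum of $G$, using $S^{a-1}\geq A^{a-1}$, forces $G\leq c$ for $c$ depending only on $A$; then on $D(2R)\setminus D(R)$ we get $\left\vert \nabla \mathrm{Rm}\right\vert^{2}/S=GS^{a-1}\leq cS^{a-1}$, and the lower bound $S\geq c/R$ on $D(2R)$ from \cite{CLY} together with $a-1=-C/R$ gives $S^{a-1}\leq c$, hence $\left\vert \nabla \mathrm{Rm}\right\vert^{2}\leq cS$ on $D(2R)\setminus D(R)$. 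Since $R$ is arbitrary and $D(r_0)$ is compact (where the ratio is trivially bounded in terms of $\sup_{D(r_0)}\left\vert \mathrm{Rm}\right\vert$ and the derivative bound from Theorem \ref{Main1}), the estimate holds on all of $M$. The main technical obstacle is the careful bookkeeping of the Bochner error terms to confirm that every term not of the form $w^{2}S^{a-1}$ or $w$ is genuinely lower order on the support of $\phi$ once $a$ is taken close to $1$ — in particular checking that the gradient cross-terms can be absorbed with a coefficient that stays bounded as $a\to 1$, which is where the factor $\frac{1}{1-a}$ in the $\frac{S}{f}$ term originates and why $R\to\infty$ is exactly compensated by $f\geq R/2$.
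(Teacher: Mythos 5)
Your plan breaks at the central step: the claimed differential inequality
\begin{equation*}
\Delta _{f}w\geq \Bigl( 2a-\frac{c}{1-a}\frac{S}{f}\Bigr) w^{2}S^{a-1}-c\,w,
\qquad w=\left\vert \nabla \mathrm{Rm}\right\vert ^{2}S^{-a},
\end{equation*}
has no source for the quadratic term. In Lemma \ref{DI} the term $2a\,u^{2}S^{a-1}$ arises from a coincidence special to $u=\left\vert \mathrm{Ric}\right\vert ^{2}S^{-a}$: the factor $2a\left\vert \mathrm{Ric}\right\vert ^{2}S^{-a-1}$ in $\Delta _{f}(S^{-a})$, when multiplied by the numerator $\left\vert \mathrm{Ric}\right\vert ^{2}$, reproduces $\left\vert \mathrm{Ric}\right\vert ^{4}S^{-a-1}=u^{2}S^{a-1}$. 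For your $w$ the analogous product is $2a\left\vert \mathrm{Ric}\right\vert ^{2}\left\vert \nabla \mathrm{Rm}\right\vert ^{2}S^{-a-1}$, which is \emph{not} $w^{2}S^{a-1}=\left\vert \nabla \mathrm{Rm}\right\vert ^{4}S^{-a-1}$ unless one had $\left\vert \nabla \mathrm{Rm}\right\vert \leq c\left\vert \mathrm{Ric}\right\vert$, which is not available; indeed you explicitly propose to drop that term, and neither the Bochner formula for $\left\vert \nabla \mathrm{Rm}\right\vert ^{2}$ nor any other ingredient produces a quartic $\left\vert \nabla \mathrm{Rm}\right\vert ^{4}$ expression. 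Keeping all terms honestly, the best one gets from your inputs is $\Delta _{f}w\geq -c\,w$ (since $\left\vert \mathrm{Ric}\right\vert ^{2}S^{-1}\leq C$ makes the dropped term merely linear in $w$), and a linear inequality with the wrong sign cannot drive the cutoff/maximum-principle scheme of Proposition \ref{Rc}, whose conclusion $G\leq c$ at an interior maximum relies precisely on the presence of the $G^{2}S^{a-1}$ term.

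The paper's proof supplies the missing positivity from a different place: a refined Bochner identity. Commuting derivatives and using $f_{pq}=\frac{1}{2}g_{pq}-R_{pq}$, $R_{ijkl}f_{l}=\nabla _{j}R_{ik}-\nabla _{i}R_{jk}$ and $\Delta _{f}\mathrm{Rm}=\mathrm{Rm}+\mathrm{Rm}\ast \mathrm{Rm}$, one finds $\Delta _{f}(\nabla \mathrm{Rm})=\frac{3}{2}\nabla \mathrm{Rm}+\mathrm{Rm}\ast \nabla \mathrm{Rm}$, hence
\begin{equation*}
\Delta _{f}\left\vert \nabla \mathrm{Rm}\right\vert ^{2}\geq 2\left\vert \nabla ^{2}\mathrm{Rm}\right\vert ^{2}+3\left\vert \nabla \mathrm{Rm}\right\vert ^{2}-c\left\vert \mathrm{Rm}\right\vert \left\vert \nabla \mathrm{Rm}\right\vert ^{2},
\end{equation*}
with the crucial $+3\left\vert \nabla \mathrm{Rm}\right\vert ^{2}$ term absent from the generic formula you quote. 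Working directly with $a=1$, i.e. $w=\left\vert \nabla \mathrm{Rm}\right\vert ^{2}S^{-1}$, the $+3$ beats the $-1$ coming from $\Delta _{f}(S^{-1})$, and after bounding $c\left\vert \mathrm{Rm}\right\vert \left\vert \nabla \mathrm{Rm}\right\vert ^{2}S^{-1}\leq \left\vert \nabla \mathrm{Rm}\right\vert ^{2}S^{-1}+c$ via Proposition \ref{Rm} and $\left\vert \nabla \mathrm{Rm}\right\vert \leq c$, one obtains the \emph{linear} inequality $\Delta _{f}(w-c)\geq w-c$. Boundedness then follows not from quadratic absorption but from the cutoff $\psi (f)=\frac{R-f}{R}$, whose drift Laplacian $\Delta _{f}\psi =\frac{f-2}{R}$ forces any positive interior maximum of $\psi ^{2}(w-c)$ to lie where $f\geq R-6$, and there $\psi ^{2}\leq 36/R^{2}$ together with $\left\vert \nabla \mathrm{Rm}\right\vert \leq c$ and $Sf\geq c$ gives a bound of order $1/R$. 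So the gap in your proposal is not bookkeeping of error terms; it is the absence of any good term strong enough to run the maximum principle, which the paper manufactures through the refined identity above.
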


\begin{proof}
Let us first prove the following inequality,%
\begin{equation}
\Delta _{f}\left\vert \nabla \mathrm{Rm}\right\vert ^{2}\geq 2\left\vert
\nabla ^{2}\mathrm{Rm}\right\vert ^{2}+3\left\vert \nabla \mathrm{Rm}%
\right\vert ^{2}-c\left\vert \mathrm{Rm}\right\vert \left\vert \nabla 
\mathrm{Rm}\right\vert ^{2}.  \label{u1}
\end{equation}%
We have that 
\begin{equation*}
\Delta _{f}\left\vert \nabla \mathrm{Rm}\right\vert ^{2}=2\left\vert \nabla
^{2}\mathrm{Rm}\right\vert ^{2}+2\left\langle \Delta _{f}\left( \nabla 
\mathrm{Rm}\right) ,\nabla \mathrm{Rm}\right\rangle .
\end{equation*}%
Now we compute 
\begin{eqnarray*}
\Delta _{f}\left( \nabla _{q}\mathrm{Rm}\right) &=&\nabla _{p}\nabla
_{p}\nabla _{q}R_{ijkl}-\nabla _{p}\left( \nabla _{q}R_{ijkl}\right) f_{p} \\
&=&\nabla _{p}\nabla _{q}\nabla _{p}R_{ijkl}-\nabla _{q}\left( \nabla
_{p}R_{ijkl}\right) f_{p}+\mathrm{Rm}\ast \nabla \mathrm{Rm} \\
&=&\nabla _{q}\nabla _{p}\nabla _{p}R_{ijkl}-\nabla _{q}\left( \nabla
_{p}R_{ijkl}f_{p}\right) +f_{pq}\left( \nabla _{p}R_{ijkl}\right) +\mathrm{Rm%
}\ast \nabla \mathrm{Rm} \\
&=&\nabla _{q}\left( \Delta _{f}R_{ijkl}\right) +\frac{1}{2}\nabla
_{q}R_{ijkl}+\mathrm{Rm}\ast \nabla \mathrm{Rm} \\
&=&\frac{3}{2}\nabla _{q}R_{ijkl}+\mathrm{Rm}\ast \nabla \mathrm{Rm}.
\end{eqnarray*}%
In these equalities we used the Ricci identities and the formulas $%
R_{ijkl}f_{l}=\nabla_{j} R_{ik}-\nabla_{i} R_{jk}$ and $\Delta _{f}\mathrm{Rm%
}=\mathrm{Rm}+\mathrm{Rm}\ast \mathrm{Rm}$ from (\ref{id}). Hence, (\ref{u1}%
) is proved.

Using (\ref{u1}) we get%
\begin{align}
\Delta _{f}\left( \left\vert \nabla \mathrm{Rm}\right\vert ^{2}S^{-1}\right)
& \geq S^{-1}\left( 2\left\vert \nabla ^{2}\mathrm{Rm}\right\vert
^{2}+3\left\vert \nabla \mathrm{Rm}\right\vert ^{2}-c\left\vert \mathrm{Rm}%
\right\vert \left\vert \nabla \mathrm{Rm}\right\vert ^{2}\right)  \label{A1}
\\
& +\left\vert \nabla \mathrm{Rm}\right\vert ^{2}\left( -S^{-1}+2\left\vert 
\mathrm{Ric}\right\vert ^{2}S^{-2}+2\left\vert \nabla S\right\vert
^{2}S^{-3}\right)  \notag \\
& -4\left\vert \nabla S\right\vert \left\vert \nabla ^{2}\mathrm{Rm}%
\right\vert S^{-2}\left\vert \nabla \mathrm{Rm}\right\vert  \notag \\
& \geq 2\left\vert \nabla \mathrm{Rm}\right\vert ^{2}S^{-1}-c\left\vert 
\mathrm{Rm}\right\vert \left\vert \nabla \mathrm{Rm}\right\vert ^{2}S^{-1}. 
\notag
\end{align}%
To derive the last line of (\ref{A1}) we have used that 
\begin{eqnarray*}
2\left\vert \left\langle \nabla \left\vert \nabla \mathrm{Rm}\right\vert
^{2},\nabla S^{-1}\right\rangle \right\vert &\leq &4\left\vert \nabla ^{2}%
\mathrm{Rm}\right\vert \left\vert \nabla \mathrm{Rm}\right\vert
S^{-2}\left\vert \nabla S\right\vert \\
&\leq &2\left\vert \nabla ^{2}\mathrm{Rm}\right\vert ^{2}S^{-1}+2\left\vert
\nabla S\right\vert ^{2}S^{-3}\left\vert \nabla \mathrm{Rm}\right\vert ^{2}.
\end{eqnarray*}%
Using Proposition \ref{Rm} and (\ref{T2}) we can bound 
\begin{eqnarray*}
c\left\vert \mathrm{Rm}\right\vert \left\vert \nabla \mathrm{Rm}\right\vert
^{2}S^{-1} &\leq &c\left\vert \nabla \mathrm{Rm}\right\vert S^{-\frac{1}{2}}
\\
&\leq &\left\vert \nabla \mathrm{Rm}\right\vert ^{2}S^{-1}+c.
\end{eqnarray*}%
Therefore, the function 
\begin{equation*}
w:=\left\vert \nabla \mathrm{Rm}\right\vert ^{2}S^{-1}-c
\end{equation*}%
satisfies 
\begin{equation}
\Delta _{f}w\geq w.  \label{A5}
\end{equation}%
Our goal is to show that $w$ must be bounded above. We use the maximum
principle again.

Let $\psi \left( t\right) =\frac{R-t}{R}$ on $\left[ 0,R\right]$ and $\psi
=0 $ for $t\geq R$. Then $\psi \left( f\right) $ as a cutoff function on $M$
satisfies 
\begin{eqnarray}
\left\vert \nabla \psi \right\vert &=&\frac{\left\vert \nabla f\right\vert }{%
R}  \label{A5'} \\
\Delta _{f}\psi &=&\frac{1}{R}\left( f-2\right) .  \notag
\end{eqnarray}%
Therefore, for $G:=\psi ^{2}\,w,$ using (\ref{A5}), we have

\begin{equation}
\Delta _{f}G\geq \left( 1+\psi ^{-1}\frac{2}{R}\left( f-2\right) -6\psi
^{-2}\left\vert \nabla \psi \right\vert ^{2}\right) G+4\psi
^{-1}\left\langle \nabla G,\nabla \psi \right\rangle .  \label{A6}
\end{equation}%
Suppose that $G\left( q\right) >0$ at the maximum point $q$ of $G.$ Then (%
\ref{A6}) implies that 
\begin{equation}
\frac{2}{R}\left( f-2\right) \psi \leq 6\left\vert \nabla \psi \right\vert
^{2}\leq 6\frac{1}{R^{2}}f.  \label{A7}
\end{equation}

If $q\in D\left( r_{0}\right) $, then 
\begin{eqnarray*}
\sup_{D\left( \frac{R}{2}\right) }\left( \left\vert \nabla \mathrm{Rm}%
\right\vert ^{2}S^{-1}\right) &\leq & c+4\,\sup_{D\left( \frac{R}{2}\right)
}G \\
&\leq & c+4\,\sup_{D\left( r_{0}\right) }G \\
&\leq &c.
\end{eqnarray*}%
On the other hand, if $q\in M\backslash D\left( r_{0}\right) ,$ then $%
f\left( q\right) -2\geq \frac{1}{2}f\left( q\right).$ By (\ref{A7}), $\psi
\left( q\right) R\leq 6.$ This shows that $f\left( q\right) \geq R-6.$
Therefore,

\begin{eqnarray*}
\frac{1}{4}\sup_{D\left( \frac{R}{2}\right) }\left( \left\vert \nabla 
\mathrm{Rm}\right\vert ^{2}S^{-1}-c\right) &\leq &\sup_{D\left( \frac{R}{2}%
\right) }G \\
&\leq &G\left( q\right) \\
&\leq &\frac{36}{R^{2}}\sup_{D\left( R\right) }\left( \left\vert \nabla 
\mathrm{Rm}\right\vert ^{2}S^{-1}\right) \\
&\leq &\frac{c}{R},
\end{eqnarray*}%
where in the last line we have used (\ref{T2}) and that $Sf\geq c>0$ by \cite%
{CLY}.

In conclusion, we have proved that if $G\left( q\right) >0$, then 
\begin{equation*}
\sup_{D\left( \frac{R}{2}\right) }\left( \left\vert \nabla \mathrm{Rm}%
\right\vert ^{2}S^{-1}\right) \leq c.
\end{equation*}%
On the other hand, if at the maximum point $q$ of $G$ we have $G\left(
q\right) \leq 0,$ then $w$ is nonpositive on $D\left( R\right),$ which again
implies

\begin{equation*}
\sup_{D\left( \frac{R}{2}\right) }\left( \left\vert \nabla \mathrm{Rm}%
\right\vert ^{2}S^{-1}\right) \leq c.
\end{equation*}%
This proves the proposition.
\end{proof}

We now wish to establish a gradient estimate for the scalar curvature. This
will be improved later.

\begin{lemma}
\label{Scalar_first}Let $\left( M,g,f\right) $ be a four dimensional
shrinking gradient Ricci soliton with bounded scalar curvature $S\leq A.$
Then there exists a constant $C>0$ so that 
\begin{equation*}
\left\vert \nabla \ln S\right\vert ^{2}\leq C\,\ln (f+2)\text{ on }M.
\end{equation*}
\end{lemma}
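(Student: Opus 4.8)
The plan is to reformulate the estimate in terms of $\phi := \ln(A/S)$, which is nonnegative since $S\le A$, and to prove the cleaner bound $|\nabla\phi|^2\le K\phi + C$. This suffices: by the lower bound $Sf\ge c$ of \cite{CLY} we have $\phi\le\ln(Af/c)\le C\ln(f+2)$ on $M$, so $|\nabla\ln S|^2=|\nabla\phi|^2\le C\ln(f+2)$ follows. From $\Delta_f S=S-2|\mathrm{Ric}|^2$ in (\ref{id}) one computes $\Delta_f\phi=|\nabla\phi|^2-1+2Q$, where $Q:=|\mathrm{Ric}|^2/S$ satisfies $0\le Q\le C$ by Proposition \ref{Rc}.

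First I would run the weighted Bochner formula on $w:=|\nabla\phi|^2$. Since the soliton satisfies $\mathrm{Ric}+\mathrm{Hess}(f)=\tfrac12 g$, the Bakry--\'Emery term contributes exactly $\tfrac12 w$, giving $\Delta_f w=2|\mathrm{Hess}\,\phi|^2+2\langle\nabla\phi,\nabla\Delta_f\phi\rangle+w$. Differentiating $\Delta_f\phi=w-1+2Q$ and using $\nabla Q=S^{-1}\nabla|\mathrm{Ric}|^2+Q\,\nabla\phi$ produces a favorable $+4Qw$ term, while the remaining piece is controlled by $|\nabla|\mathrm{Ric}|^2|\le c\,S$, which follows from $|\mathrm{Ric}|\le c\sqrt S$ (Proposition \ref{Rc}) and $|\nabla\mathrm{Rm}|\le c\sqrt S$ (Proposition \ref{nabla_Rm}). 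Dropping $2|\mathrm{Hess}\,\phi|^2\ge0$ and $4Qw\ge0$, I expect
\[
\Delta_f w\ \ge\ w-c\sqrt w+2\langle\nabla\phi,\nabla w\rangle .
\]
The drift $2\langle\nabla\phi,\nabla w\rangle$ is exactly accounted for by passing to the weight $\hat f:=f+2\phi$: then $\Delta_{\hat f}w\ge w-c\sqrt w$ and $\Delta_{\hat f}\phi=-w-1+2Q$.

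Next I would test the combination $G:=w-K\phi$ with $K$ a large constant. The two inequalities assemble into $\Delta_{\hat f}G\ge(1+K)w-c\sqrt w+K(1-2Q)\ge(1+K)w-c\sqrt w-K(2C-1)$. At a point where $G$ attains its maximum one has $\Delta_{\hat f}G\le0$, and the resulting quadratic-in-$\sqrt w$ inequality $(1+K)w-c\sqrt w\le K(2C-1)$ forces $w\le c_1$ for a constant $c_1$ independent of $K$ (once $K$ is large). Since $\phi\ge0$, at that point $G=w-K\phi\le w\le c_1$, hence $\sup_M G\le c_1$, i.e.\ $w\le K\phi+c_1$ everywhere. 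Combined with $\phi\le C\ln(f+2)$, this yields the lemma.

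The main obstacle is justifying the maximum-principle step on the noncompact $M$, since a priori $G$ need not attain a maximum. The subtlety is genuine: the relevant drift is $\nabla\phi=-\nabla\ln S$, whose length is $\sqrt w$, precisely the quantity being bounded, so a naive cutoff in $f$ reintroduces an uncontrolled $w^{3/2}$ term. I would handle this using that the soliton has $\mathrm{Ric}_f=\tfrac12 g\ge0$, which permits a weighted Omori--Yau type argument, together with the a priori controls from Section 1 --- $|\nabla S|\le c$, boundedness of $|\mathrm{Rm}|$, and the fact that $S$ is bounded below by a positive constant on the compact set $D(r_0)$ (so that $w$ is automatically bounded there) --- to localize via a suitable exhaustion and then remove the cutoff. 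This is precisely the point at which one settles for the logarithmic factor, the estimate being sharpened later in the paper.
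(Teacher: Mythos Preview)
Your setup is clean and the computations up to the maximum principle are correct: with $\phi=\ln(A/S)$, $w=|\nabla\phi|^{2}$, $Q=|\mathrm{Ric}|^{2}/S$, and $\hat f=f+2\phi$, one indeed obtains $\Delta_{\hat f}w\ge w-c\sqrt{w}$ and $\Delta_{\hat f}\phi=-w-1+2Q$, hence for $G=w-K\phi$ the inequality $\Delta_{\hat f}G\ge(1+K)w-c\sqrt{w}-K(2C-1)$. If $G$ attained a maximum, your conclusion $w\le K\phi+c_{1}$ would follow.

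The gap is exactly where you place it, and it is not closed by the remedies you list. For the Omori--Yau route, one needs $G$ bounded above; but the only a priori information is $|\nabla\mathrm{Rm}|^{2}\le CS$ (Proposition~\ref{nabla_Rm}), which gives merely $w\le C/S\le C' f$, so $G=w-K\phi$ may grow linearly in $f$. The weighted Omori--Yau principle for $\Delta_{f}$ (valid since $\mathrm{Ric}_{f}=\tfrac12 g$) therefore does not apply. If instead you localize with a cutoff $\psi=\psi(f)$ and work with $\Delta_{\hat f}$, the term $\Delta_{\hat f}\psi=\Delta_{f}\psi-2\langle\nabla\phi,\nabla\psi\rangle$ contains $-2\langle\nabla\phi,\nabla\psi\rangle$, whose size is $2\sqrt{w}\,|\nabla f|/R$. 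The a priori bound $\sqrt{w}\le C''\sqrt{f}$ makes this of order $f/R$, i.e.\ comparable to (not dominated by) $\Delta_{f}\psi=(f-2)/R$, so one cannot get $\Delta_{\hat f}\psi\ge0$ on the relevant region. This is the circularity you flagged, and it genuinely blocks the argument: the drift you have introduced is as large as the quantity you are trying to control.

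The paper's proof circumvents this by an $\epsilon$-regularization rather than by working with $\ln S$ directly. One sets $h=\epsilon^{-1}S^{\epsilon}$ and $\sigma=|\nabla h|^{2}=S^{2\epsilon}\,w$, and the Bochner formula then produces a genuinely \emph{quadratic} good term $\epsilon(1-\epsilon)S^{-2\epsilon}\sigma^{2}$ on the right-hand side, with small coefficient $\epsilon$. After a standard cutoff $\phi(f)$ supported in $D(2R)$, the cross term coming from the drift is only $O(G^{3/2}/\sqrt{R})$, and the inequality at the maximum reads $\epsilon G^{2}\le cR^{-1/2}G^{3/2}+cG+c$. Choosing $\epsilon=(\ln R)^{-1}$ balances these and yields $G\le c\ln R$; since $S\ge c/R$ on $D(R)$ one has $S^{2\epsilon}\ge c>0$, hence $w\le c\ln R$ on $\Sigma(R)$. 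In other words, in the paper the logarithmic loss arises from balancing the $\epsilon$-small coercive term against the cutoff error, not from bounding $\phi$ by $\ln f$; the $S^{\epsilon}$ device is precisely what allows the cutoff to close without knowing anything about $|\nabla\ln S|$ in advance.
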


\begin{proof}
We adopt an argument in \cite{MW}. Let $h:=\frac{1}{\epsilon }S^{\epsilon }$
with $\epsilon >0$ small to be determined later. Then a direct computation
gives 
\begin{equation*}
\Delta _{f}h=\epsilon h-2\epsilon \left\vert \mathrm{Ric}\right\vert
^{2}S^{-1}h+\left( \epsilon -1\right) S^{\epsilon -2}\left\vert \nabla
S\right\vert ^{2}.
\end{equation*}

Let us denote $\sigma :=\left\vert \nabla h\right\vert ^{2}=S^{2\epsilon
-2}\left\vert \nabla S\right\vert ^{2}$. The Bochner formula asserts that 
\begin{align*}
\frac{1}{2}\Delta _{f}\sigma & =\left\vert \mathrm{Hess}\left( h\right)
\right\vert ^{2}+\left\langle \nabla h,\nabla \left( \Delta _{f}h\right)
\right\rangle +\mathrm{Ric}_{f}\left( \nabla h,\nabla h\right) \\
& \geq \left\langle \nabla h,\nabla \left( \Delta _{f}h\right) \right\rangle
\\
& \geq \left( \epsilon -1\right) \left\langle \nabla h,\nabla \left(
S^{\epsilon -2}\left\vert \nabla S\right\vert ^{2}\right) \right\rangle
-2\epsilon \left\langle \nabla h,\nabla \left( \left\vert \mathrm{Ric}%
\right\vert ^{2}S^{-1}h\right) \right\rangle .
\end{align*}%
Note that 
\begin{eqnarray*}
\left\langle \nabla h,\nabla \left( S^{\epsilon -2}\left\vert \nabla
S\right\vert ^{2}\right) \right\rangle &=&\left\langle \nabla h,\nabla
\left( S^{-\epsilon }\sigma \right) \right\rangle \\
&=&-\epsilon \left\langle \nabla h,\nabla S\right\rangle S^{-\epsilon
-1}\sigma +S^{-\epsilon }\left\langle \nabla h,\nabla \sigma \right\rangle \\
&=&-\epsilon \left\vert \nabla h\right\vert ^{2}S^{-2\epsilon }\sigma
+S^{-\epsilon }\left\langle \nabla h,\nabla \sigma \right\rangle .
\end{eqnarray*}%
Furthermore, we have 
\begin{eqnarray*}
-\epsilon \left\langle \nabla h,\nabla \left( \left\vert \mathrm{Ric}%
\right\vert ^{2}S^{-1}h\right) \right\rangle &\geq &-2\epsilon \left\vert
\nabla \mathrm{Ric}\right\vert \left\vert \mathrm{Ric}\right\vert
h\left\vert \nabla h\right\vert S^{-1}-\epsilon \left\vert \mathrm{Ric}%
\right\vert ^{2}S^{-1}\left\vert \nabla h\right\vert ^{2} \\
&\geq &-c\epsilon h\left\vert \nabla h\right\vert -c \epsilon \sigma \\
&\geq &-c- c\sigma ,
\end{eqnarray*}%
where in the second line we used Proposition \ref{Rc} and Proposition \ref%
{nabla_Rm} to bound  $\left\vert \nabla \mathrm{Ric}\right\vert \left\vert 
\mathrm{Ric}\right\vert S^{-1}\leq c,$ and in the last line we used $%
\epsilon h=S^{\epsilon }\leq c$.

Consequently, 
\begin{equation}
\frac{1}{2}\Delta _{f}\sigma \geq \epsilon \left( 1-\epsilon \right)
S^{-2\epsilon }\sigma ^{2}+\left( \epsilon -1\right) S^{-\epsilon
}\left\langle \nabla h,\nabla \sigma \right\rangle -c \sigma -c.  \label{z1}
\end{equation}%
Let $\phi $ be a smooth non-negative function defined on the real line so
that $\phi \left( t\right) =1$ for $0\leq t\leq R$ and $\phi \left( t\right)
=0$ for $t\geq 2R$. We may choose $\phi $ so that 
\begin{equation*}
t^{2}\left( \left\vert \phi ^{\prime }\right\vert ^{2}\left( t\right)
+\left\vert \phi ^{\prime \prime }\right\vert \left( t\right) \right) \leq c.
\end{equation*}
We use $\phi \left( f\left( x\right) \right) $ as a cut-off function with
support in $D\left( 2R\right) .$ Note that we have $\left\vert \nabla \phi
\right\vert \leq \frac{c}{\sqrt{R}}$ and $\left\vert \Delta _{f}\phi
\right\vert \leq c$ for a universal constant $c>0.$

Let $G:=\phi ^{2}\sigma $. From (\ref{z1}), we find that%
\begin{align*}
\frac{1}{2}\phi ^{2}\Delta _{f}G& =\frac{1}{2}\phi ^{4}\left( \Delta
_{f}\sigma \right) +\frac{1}{2}G\left( \Delta _{f}\phi ^{2}\right) +\phi
^{2}\left\langle \nabla \phi ^{2},\nabla \sigma \right\rangle \\
& \geq \epsilon \left( 1-\epsilon \right) S^{-2\epsilon }\ G^{2}+\left(
\epsilon -1\right) S^{-\epsilon }\left\langle \nabla h,\nabla G\right\rangle
\phi ^{2}-\left( \epsilon -1\right) S^{-\epsilon }\left\langle \nabla
h,\nabla \phi ^{2}\right\rangle G \\
& -cG-c+\ \left\langle \nabla \phi ^{2},\nabla G\right\rangle .
\end{align*}%
At the maximum point of $G$ we have%
\begin{eqnarray}
\epsilon G^{2} &\leq &c\ G^{\frac{3}{2}}\left\vert \nabla \phi \right\vert
S^{\epsilon }+cG+c  \label{z2} \\
&\leq &\frac{c}{\sqrt{R}}G^{\frac{3}{2}}+cG+c.  \notag
\end{eqnarray}

We now choose $\epsilon :=\left( \ln R\right) ^{-1}$. It is easy to see that
(\ref{z2}) implies%
\begin{equation*}
\sup_{M}G\leq \frac{c}{\epsilon }=c\,\ln R.
\end{equation*}%
This proves that 
\begin{equation*}
\sup_{D\left( R\right) }\left( S^{2\epsilon }\left\vert \nabla \ln
S\right\vert ^{2}\right) \leq c\, \ln R.
\end{equation*}%
Using the bound in \cite{CLY} that $S\geq \frac{c}{R}$ on $D\left( R\right) $%
, one easily concludes that $S^{2\epsilon }\ge c>0.$ Thus, 
\begin{equation*}
\sup_{\Sigma\left( R\right) }\left\vert \nabla \ln S\right\vert ^{2}\leq
c\,\ln (f+2)
\end{equation*}
and the result follows.
\end{proof}

To prove Theorem \ref{Main} in the introduction, we need to improve the
Ricci curvature estimate from\ Proposition \ref{Rc}. Let us first establish
a parallel version of Lemma \ref{DI}.

\begin{lemma}
\label{DI_improved}Let $\left( M,g,f\right) $ be a four dimensional
shrinking gradient Ricci soliton with bounded scalar curvature $S\le A.$
Then the function $u:=\left\vert \mathrm{Ric}\right\vert ^{2}S^{-2}$
verifies the differential inequality 
\begin{equation*}
\ \ \Delta_{F}u\geq 3\,u^{2}\,S-c\,u\,S
\end{equation*}%
on $M\backslash D\left( r_{0}\right) $ for some $r_{0}>0$ which depends only
on $A$. Here, $c>0$ is a universal constant and 
\begin{equation*}
F:=f-2\ln S.
\end{equation*}
\end{lemma}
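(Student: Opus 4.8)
The plan is to mimic the computation in Lemma \ref{DI} but now working with the weighted Laplacian $\Delta_F$ associated to the modified potential $F=f-2\ln S$, which absorbs the troublesome gradient term $\langle \nabla S^{-2},\nabla|\mathrm{Ric}|^2\rangle$ into the drift. First I would record the basic identity $\Delta_F u = \Delta_f u - 2\langle \nabla \ln S,\nabla u\rangle$ and expand $u=|\mathrm{Ric}|^2 S^{-2}$ by the product rule, just as in (\ref{F2}), so that $\Delta_F u$ equals $S^{-2}\Delta_f|\mathrm{Ric}|^2$ plus terms involving $\Delta_f S^{-2}$, $|\nabla S|^2 S^{-4}$, and cross terms $\langle \nabla S^{-2},\nabla|\mathrm{Ric}|^2\rangle$; the point of the $-2\ln S$ correction is precisely that the extra drift $-2\langle\nabla\ln S,\nabla u\rangle$ cancels the worst of these cross terms. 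Using $\Delta_f S = S - 2|\mathrm{Ric}|^2$ from (\ref{id}), the zeroth-order piece coming from $|\mathrm{Ric}|^2\Delta_f(S^{-2})$ produces the favorable term $+ 6\,|\mathrm{Ric}|^4 S^{-3}$ (the coefficient $a(a+1)=6$ at $a=2$ now being a help rather than part of a balancing act), which is what will become $3u^2 S$ after I account for the other negative contributions.

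Next I would feed in the Ricci Bochner inequality (\ref{F1}), i.e. the estimate $\Delta_f|\mathrm{Ric}|^2 \geq 2|\nabla\mathrm{Ric}|^2 - c|\mathrm{Rm}||\mathrm{Ric}|^2$, together with the now-available pointwise bounds: Proposition \ref{Rm} gives $|\mathrm{Rm}|^2 \leq cS$, hence $|\mathrm{Rm}|\leq c\sqrt S$; Proposition \ref{Rc} gives $|\mathrm{Ric}|^2\leq cS$; Proposition \ref{nabla_Rm} gives $|\nabla\mathrm{Rm}|^2\leq cS$, hence $|\nabla\mathrm{Ric}|\leq c\sqrt S$; and $Sf\geq c$ from \cite{CLY}. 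These replace the crude $1/f$ and $1/\sqrt f$ weights of Lemma \ref{DI} by clean powers of $S$, so every error term can be written in the form $c\,u\,S$ or smaller after multiplying through by $S^{-2}$. Concretely, $|\mathrm{Rm}||\mathrm{Ric}|^2 S^{-2}\leq c\sqrt S\,|\mathrm{Ric}|^2 S^{-2} = c\sqrt S\,u \leq c\,u\,S$ once we also note $S\geq c/f$... wait, more carefully: $\sqrt S\,u$ versus $uS$ — here I would use that on $M\setminus D(r_0)$ one has $S$ bounded, so $\sqrt S \le c$ and $\sqrt S \, u \le c u$, but to land exactly on $c\,u\,S$ I instead keep $\sqrt S\, u$ and absorb: since $u \le c$ (from Proposition \ref{Rc}, $u=|\mathrm{Ric}|^2S^{-2}\le c/S\le cf$, which is \emph{not} bounded) — so the cleanest route is to carry the terms as $c\,u^{3/2}S^{1/2}$-type expressions and then use Young's inequality $c\,u^{3/2}S^{1/2}\le \tfrac12 u^2 S + c\,u\,S$ together with $c\,u \le c\,u\,S\cdot S^{-1}\le c\,u\,f$, finally invoking $Sf\ge c$ to convert any lingering $u$ or $u/f$ into $c\,u\,S$. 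The bookkeeping here, tracking which of $6u^2S$ survives as $3u^2S$ after subtracting the Young's-inequality losses, is the step that demands care.

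I expect the main obstacle to be exactly this coefficient accounting: one must verify that the genuinely positive term $6|\mathrm{Ric}|^4 S^{-3} = 6u^2 S$ is only partially consumed by the negative contributions — the gradient term $-2|\nabla\mathrm{Ric}|^2$-type leftovers (after the $S^{-2}\cdot 2|\nabla\mathrm{Ric}|^2$ gain is used to dominate cross terms via Cauchy–Schwarz) and the curvature error $c|\mathrm{Rm}||\mathrm{Ric}|^2 S^{-2}$ — leaving at least $3u^2 S$. Getting the constant $3$ rather than something smaller requires that the cross-term cancellation from the $-2\ln S$ drift be essentially exact, so that only a controlled fraction of the $6$ is lost. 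Once the differential inequality $\Delta_F u \ge 3u^2 S - c\,u\,S$ is in hand, the proof concludes; the cutoff/maximum-principle exploitation of it (paralleling Proposition \ref{Rc}, now with the extra care that $F$ and $f$ differ only by the bounded-gradient quantity $2\ln S$, so the cutoff $\phi(f)$ still has $|\nabla\phi|\le c/\sqrt R$ and $|\Delta_F\phi|\le c$) will presumably be carried out in the next result rather than here.
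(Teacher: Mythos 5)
Your overall architecture is the same as the paper's (expand $\Delta_F u$ by the product rule, let the drift $-2\langle\nabla\ln S,\nabla\cdot\rangle$ absorb the cross term, feed in the previously established estimates, then Young), but there is a genuine gap in how you treat the curvature error term $c\,|\mathrm{Rm}||\mathrm{Ric}|^{2}S^{-2}$ from the Bochner-type inequality. You bound $|\mathrm{Rm}|\le c\sqrt{S}$ via Proposition \ref{Rm}, which turns this term into $c\,u\,S^{1/2}$, and then invoke the inequality $c\,u^{3/2}S^{1/2}\le \tfrac12 u^{2}S+c\,u\,S$. That inequality is false: dividing by $u\,S^{1/2}$ it reads $u^{1/2}\le \tfrac{1}{2c}\,u\,S^{1/2}+S^{1/2}$, which fails wherever $S$ is small and $u$ is of order one; note $u\ge 1/4$ everywhere (since $S\le 2|\mathrm{Ric}|$ in dimension four) while $S$ can decay like $1/f$. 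For the same reason a leftover $c\,u$ (or $c\,u\,S^{1/2}$, or a constant) cannot be upgraded to $c\,u\,S$ by invoking $Sf\ge c$: that only converts $u/f$-type terms, since $1\le c\,fS$ introduces an unbounded factor $f$. The factor $S$ on the linear term is not cosmetic — in the subsequent maximum-principle argument it is what cancels against the $S$ in $3u^{2}S$ at the maximum point; with an error like $c\,u\,S^{1/2}$ one only gets $u\lesssim S^{-1/2}\lesssim\sqrt{f}$, which is useless.

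The missing ingredient is the pointwise bound $|\mathrm{Rm}|\le c\,|\mathrm{Ric}|$ on $M\backslash D(r_{0})$, which is precisely the first step of the paper's proof (\ref{Rm_bd}): combine Proposition \ref{Curv} with $|\nabla\mathrm{Ric}|\le c\sqrt{S}$ (Proposition \ref{nabla_Rm}), $S\ge c/f$ from \cite{CLY}, and $S\le 2|\mathrm{Ric}|$, to get $|\mathrm{Rm}|\le c\bigl(\sqrt{S/f}+1/f+|\mathrm{Ric}|\bigr)\le c\,(S+|\mathrm{Ric}|)\le c\,|\mathrm{Ric}|$. With this the error is $c\,|\mathrm{Ric}|^{3}S^{-2}=c\,u^{3/2}S$, and the correct Young step $c\,u^{3/2}S\le u^{2}S+c\,u\,S$ carries the needed factor of $S$. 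One further bookkeeping correction: the favorable zeroth-order term produced by $|\mathrm{Ric}|^{2}\Delta_{f}(S^{-2})$ has coefficient $2a=4$, i.e. $4u^{2}S$, not $6u^{2}S$; the coefficient $a(a+1)=6$ multiplies $|\nabla S|^{2}S^{-4}|\mathrm{Ric}|^{2}$ and is consumed in full, together with the $2|\nabla\mathrm{Ric}|^{2}S^{-2}$ from Bochner, in absorbing the cross term once $-2\langle\nabla\ln S,\nabla u\rangle$ is moved into the drift. So the budget is $4u^{2}S$, one unit is lost to Young, and one lands exactly on $3u^{2}S-c\,u\,S$ as claimed.
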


\begin{proof}
Note that by Proposition \ref{Curv} \ and Proposition \ref{nabla_Rm} we have 
\begin{eqnarray*}
\left\vert \mathrm{Rm}\right\vert &\leq &c\left( \frac{\left\vert \nabla 
\mathrm{Ric}\right\vert }{\sqrt{f}}+\frac{\left\vert \mathrm{Ric}\right\vert
^{2}+1}{f}+\left\vert \mathrm{Ric}\right\vert \right) \\
&\leq &c\left( \frac{\sqrt{S}}{\sqrt{f}}+\frac{1}{f}+\left\vert \mathrm{Ric}%
\right\vert \right) \\
&\leq &c\left( S+\left\vert \mathrm{Ric}\right\vert \right) .
\end{eqnarray*}%
In the last line we have used the fact that $S\geq \frac{c}{f}$ on $M$.
Since $S\leq 2\left\vert \mathrm{Ric}\right\vert ,$ we conclude that 
\begin{equation}
\left\vert \mathrm{Rm}\right\vert \leq c\left\vert \mathrm{Ric}\right\vert .
\label{Rm_bd}
\end{equation}

By (\ref{id}) we have 
\begin{eqnarray*}
\Delta _{f}\left\vert \mathrm{Ric}\right\vert ^{2} &\geq &2\left\vert \nabla 
\mathrm{Ric}\right\vert ^{2}+2\left\vert \mathrm{Ric}\right\vert
^{2}-c\left\vert \mathrm{Rm}\right\vert \left\vert \mathrm{Ric}\right\vert
^{2} \\
&\geq &2\left\vert \nabla \mathrm{Ric}\right\vert ^{2}+2\left\vert \mathrm{%
Ric}\right\vert ^{2}-c\left\vert \mathrm{Ric}\right\vert ^{3}.
\end{eqnarray*}%
Hence,

\begin{eqnarray}
&&\Delta _{f}\left( \left\vert \mathrm{Ric}\right\vert ^{2}S^{-2}\right)
\label{AA1} \\
&=&S^{-2}\Delta _{f}\left( \left\vert \mathrm{Ric}\right\vert ^{2}\right)
+\left\vert \mathrm{Ric}\right\vert ^{2}\Delta _{f}\left( S^{-2}\right)
+2\left\langle \nabla S^{-2},\nabla \left\vert \mathrm{Ric}\right\vert
^{2}\right\rangle  \notag \\
&\geq& 2\left\vert \nabla \mathrm{Ric}\right\vert ^{2}S^{-2}+2\left\vert 
\mathrm{Ric}\right\vert ^{2}S^{-2}-c\left\vert \mathrm{Ric}\right\vert
^{3}S^{-2} +2\left\langle \nabla S^{-2},\nabla \left\vert \mathrm{Ric}%
\right\vert ^{2}\right\rangle  \notag \\
&+&\left\vert \mathrm{Ric}\right\vert ^{2}\left( -2S^{-2}+4\left\vert 
\mathrm{Ric}\right\vert ^{2}S^{-3}+6\left\vert \nabla S\right\vert
^{2}S^{-4}\right).  \notag
\end{eqnarray}%
We can estimate 
\begin{eqnarray*}
2\left\langle \nabla S^{-2},\nabla \left\vert \mathrm{Ric}\right\vert
^{2}\right\rangle &=&S^{2}\left\langle \nabla S^{-2},\nabla \left(
\left\vert \mathrm{Ric}\right\vert ^{2}S^{-2}\right) \right\rangle \\
&&-\left\langle \nabla S^{-2},\nabla S^{-2}\right\rangle \left\vert \mathrm{%
Ric}\right\vert ^{2}S^{2}+\left\langle \nabla S^{-2},\nabla \left\vert 
\mathrm{Ric}\right\vert ^{2}\right\rangle \\
&\geq &-2\left\langle \nabla \ln S,\nabla \left( \left\vert \mathrm{Ric}%
\right\vert ^{2}S^{-2}\right) \right\rangle -4\left\vert \nabla S\right\vert
^{2}S^{-4}\left\vert \mathrm{Ric}\right\vert ^{2} \\
&&-4\left\vert \nabla \mathrm{Ric}\right\vert \left\vert \nabla S\right\vert
S^{-3}\left\vert \mathrm{Ric}\right\vert \\
&\geq &-2\left\langle \nabla \ln S,\nabla \left( \left\vert \mathrm{Ric}%
\right\vert ^{2}S^{-2}\right) \right\rangle -6\left\vert \nabla S\right\vert
^{2}S^{-4}\left\vert \mathrm{Ric}\right\vert ^{2} \\
&&-2\left\vert \nabla \mathrm{Ric}\right\vert ^{2}S^{-2}.
\end{eqnarray*}%
Plugging this in (\ref{AA1}) we get 
\begin{eqnarray}
\Delta _{F}\left( \left\vert \mathrm{Ric}\right\vert ^{2}S^{-2}\right) &\geq
&\ 4\left\vert \mathrm{Ric}\right\vert ^{4}S^{-3}-c\left\vert \mathrm{Ric}%
\right\vert ^{3}S^{-2}  \label{AA2} \\
&\geq &3\left\vert \mathrm{Ric}\right\vert ^{4}S^{-3}-c\left\vert \mathrm{Ric%
}\right\vert ^{2}S^{-1},  \notag
\end{eqnarray}
where we have used the Cauchy-Schwarz inequality 
\begin{equation*}
c\left\vert \mathrm{Ric}\right\vert ^{3}S^{-2}\leq \left\vert \mathrm{Ric}%
\right\vert ^{4}S^{-3}+c\left\vert \mathrm{Ric}\right\vert ^{2}S^{-1}
\end{equation*}%
in the last line. This proves the result.
\end{proof}

We are ready to prove the following result which was stated as Theorem \ref%
{Main} in the introduction.

\begin{theorem}
\label{Rc_improved}Let $\left( M,g,f\right) $ be a four dimensional
shrinking gradient Ricci soliton with bounded scalar curvature $S\le A.$
Then there exists a constant $C>0,$ depending only on $A$ and $\sup_{D\left(
r_{0}\right) }\left\vert \mathrm{Rm}\right\vert ,$ so that%
\begin{equation}
\sup_{M}\frac{\left\vert \mathrm{Rm}\right\vert }{S}\leq C.  \label{R0}
\end{equation}
\end{theorem}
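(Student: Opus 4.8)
The plan is to reduce (\ref{R0}) to a pointwise bound on the function $u:=\left\vert \mathrm{Ric}\right\vert ^{2}S^{-2}$ on $M\setminus D(r_{0})$, and then to bound $u$ there by feeding the differential inequality of Lemma \ref{DI_improved} into a maximum principle equipped with a suitable barrier. For the reduction: on $D(r_{0})$ we have $\left\vert \mathrm{Rm}\right\vert \leq \sup _{D(r_{0})}\left\vert \mathrm{Rm}\right\vert $ and, since $f\leq r_{0}$ there while $Sf\geq c_{0}>0$ by \cite{CLY}, also $S\geq c_{0}/r_{0}$, so $\left\vert \mathrm{Rm}\right\vert /S$ is already controlled on $D(r_{0})$. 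On $M\setminus D(r_{0})$, the inequality (\ref{Rm_bd}), namely $\left\vert \mathrm{Rm}\right\vert \leq c\left\vert \mathrm{Ric}\right\vert $, shows that (\ref{R0}) follows once $u$ is known to be bounded on $M\setminus D(r_{0})$. I would also record the crude a priori bound $u\leq C_{0}/S\leq (C_{0}/c_{0})\,f$, which comes from Proposition \ref{Rc} together with $Sf\geq c_{0}$; thus $u$ grows at most linearly in $f$.

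By Lemma \ref{DI_improved} we have $\Delta _{F}u\geq 3u^{2}S-c\,uS=uS\left( 3u-c\right) $ on $M\setminus D(r_{0})$, where $F=f-2\ln S$. The idea is to apply the maximum principle for the operator $\Delta _{F}$ against a barrier $G$ growing slightly faster than $f$; concretely I would take $G:=f\ln (f+2)$ (the choice $G:=f^{1+\eta}$ for a small $\eta>0$ serves equally well). The essential point is that $\Delta _{F}G\to -\infty $ as $f\to \infty $. Indeed $\Delta _{F}f=\Delta _{f}f-2\langle \nabla \ln S,\nabla f\rangle =(2-f)-2\langle \nabla \ln S,\nabla f\rangle $ by (\ref{delta}), and by the gradient estimate of Lemma \ref{Scalar_first} together with $\left\vert \nabla f\right\vert ^{2}=f-S\leq f$ from (\ref{s}), one has $\left\vert \langle \nabla \ln S,\nabla f\rangle \right\vert \leq \left\vert \nabla \ln S\right\vert \left\vert \nabla f\right\vert \leq c\,\sqrt{\ln (f+2)}\,\sqrt{f}=o(f)$; hence $\Delta _{F}f\leq -\frac{1}{2}f$ for $f$ large. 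Since $G$ is an increasing, slowly growing function of $f$, this gives $\Delta _{F}G\leq -\frac{1}{2}f\ln (f+2)+O(f)$, so $\Delta _{F}G\leq -1$ on $M\setminus D(r_{1})$ for some $r_{1}\geq r_{0}$ depending only on $A$ and $\sup _{D(r_{0})}\left\vert \mathrm{Rm}\right\vert $. It is exactly here that Lemma \ref{Scalar_first} is indispensable: the crude bound $\left\vert \nabla \ln S\right\vert =\left\vert \nabla S\right\vert /S=O(f)$ would only yield $\left\vert \langle \nabla \ln S,\nabla f\rangle \right\vert =O\!\left( f^{3/2}\right) $, which would overwhelm the drift term $\Delta _{f}f=2-f$.

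With the barrier in hand, for each $\epsilon >0$ I would study $u-\epsilon G$ on $\overline{M\setminus D(r_{1})}$. Since $u=O(f)=o(G)$ and $G\to \infty $, we have $u-\epsilon G\to -\infty $ at infinity, so $u-\epsilon G$ attains its maximum at some point $q_{\epsilon }$. If $q_{\epsilon }\in \Sigma (r_{1})$, then $\sup (u-\epsilon G)\leq \sup _{\Sigma (r_{1})}u$, which by Theorem \ref{Main1} and $S\geq c_{0}/r_{1}$ on $\Sigma (r_{1})$ is bounded by a constant depending only on $A$ and $\sup _{D(r_{0})}\left\vert \mathrm{Rm}\right\vert $. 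Otherwise $q_{\epsilon }$ lies in the interior of $M\setminus D(r_{1})$, so $\Delta _{F}\left( u-\epsilon G\right) (q_{\epsilon })\leq 0$, hence $\Delta _{F}u(q_{\epsilon })\leq \epsilon \,\Delta _{F}G(q_{\epsilon })\leq -\epsilon <0$; combined with $\Delta _{F}u\geq uS(3u-c)$ and $S>0$, $u>0$, this forces $u(q_{\epsilon })<c/3$, so again $\sup (u-\epsilon G)<c/3$. In either case $\sup _{M\setminus D(r_{1})}(u-\epsilon G)\leq M_{\ast }$ for a constant $M_{\ast }$ independent of $\epsilon $ and depending only on $A$ and $\sup _{D(r_{0})}\left\vert \mathrm{Rm}\right\vert $. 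Letting $\epsilon \to 0^{+}$ at a fixed point gives $u\leq M_{\ast }$ on $M\setminus D(r_{1})$, hence $\left\vert \mathrm{Ric}\right\vert \leq \sqrt{M_{\ast }}\,S$ on all of $M$; together with the control of $\left\vert \mathrm{Rm}\right\vert /S$ on $D(r_{1})$ and with (\ref{Rm_bd}), this establishes (\ref{R0}).

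The step I expect to be the main obstacle is handling the behavior of $u$ at infinity. The quadratic term $3u^{2}S$ in Lemma \ref{DI_improved} has coefficient $S$, which may decay like $1/f$, so a direct cutoff maximum principle of the kind used in Proposition \ref{Rc} merely reproduces the already-known bound $u\leq C_{0}/S$. Overcoming this requires a barrier $G$ that grows strictly faster than the a priori linear growth of $u$ and satisfies $\Delta _{F}G\to -\infty $; verifying the latter — in particular controlling the mixed term $\langle \nabla \ln S,\nabla f\rangle $ so that the drift $\Delta _{f}f=2-f$ still dominates — is the technical heart of the argument and is where Lemma \ref{Scalar_first} is used decisively.
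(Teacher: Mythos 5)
Your proof is correct, and its skeleton coincides with the paper's: reduce via (\ref{Rm_bd}) to bounding $u=\left\vert \mathrm{Ric}\right\vert ^{2}S^{-2}$, feed Lemma \ref{DI_improved} into a maximum principle for $\Delta _{F}$, and use Lemma \ref{Scalar_first} to keep the drift correction $2\langle \nabla \ln S,\nabla \cdot \rangle$ from overwhelming $\Delta _{f}f=2-f$. Where you genuinely differ is the localization mechanism: the paper multiplies $u$ by the cutoff $\psi ^{2}$ with $\psi =(R-f)/R$, shows $\Delta _{F}\psi \geq 0$ on $D(R)\setminus D(r_{0})$, absorbs the cutoff error $\left\vert \nabla \psi \right\vert ^{2}G\leq G/R$ into the good term $(3G^{2}-cG)S$ using $Sf\geq c$, and lets $R\to \infty $; you instead add the barrier $\epsilon \,f\ln (f+2)$ and invoke the a priori linear growth $u\leq C_{0}S^{-1}\leq Cf$ (Proposition \ref{Rc} plus $Sf\geq c$) to guarantee that $u-\epsilon G$ attains its maximum, then split into the boundary case on $\Sigma (r_{1})$ and the interior case where $\Delta _{F}u\leq -\epsilon $ forces $u<c/3$. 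Both implementations are valid; yours requires the extra growth observation (readily available here), while the paper's avoids it at the cost of tracking cutoff error terms, and both constants have the stated dependence on $A$ and $\sup _{D(r_{0})}\left\vert \mathrm{Rm}\right\vert $. One small slip: since $F=f-2\ln S$, one has $\Delta _{F}h=\Delta _{f}h+2\langle \nabla \ln S,\nabla h\rangle $, so $\Delta _{F}f=(2-f)+2\langle \nabla \ln S,\nabla f\rangle $ rather than with a minus sign; as you immediately estimate this term in absolute value by $c\sqrt{f\ln (f+2)}=o(f)$, the slip is harmless and your conclusion $\Delta _{F}G\to -\infty $ stands.
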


\begin{proof}
By (\ref{Rm_bd}) it suffices to show that 
\begin{equation}
\sup_{M}\frac{\left\vert \mathrm{Ric}\right\vert }{S}\leq C.  \label{B0}
\end{equation}%
By Lemma \ref{DI_improved} the function $u:=\left\vert \mathrm{Ric}%
\right\vert ^{2}S^{-2}$ verifies the following differential inequality.

\begin{equation}
\Delta _{F}u\geq 3\,u^{2}\,S-c\,u\,S  \label{B1}
\end{equation}%
on $M\backslash D\left( r_{0}\right) .$

Let $\psi \left( t\right) =\frac{R-t}{R}$ on $\left[ 0,R\right] $ and $\psi
=0$ for $t\geq R$. Using $\psi \left( f\right) $ as a cutoff function on $M,$
we have

\begin{eqnarray}
\left\vert \nabla \psi \right\vert &=&\frac{1}{R}\left\vert \nabla
f\right\vert  \label{B1'} \\
\Delta _{f}\psi &=&\frac{1}{R}\left( f-2\right) .  \notag
\end{eqnarray}%
By Lemma \ref{Scalar_first}, 
\begin{eqnarray*}
\Delta _{F}\psi &=&\Delta _{f}\psi +2\left\langle \nabla \ln S,\nabla \psi
\right\rangle \\
&\geq &\frac{1}{R}\left( f-2\right) -\frac{2}{R}\left\vert \nabla \ln
S\right\vert \left\vert \nabla f\right\vert \\
&\geq &\frac{1}{R}\left( f-2\right) -\frac{c}{R}\sqrt{f}\ln \left(
f+2\right) .
\end{eqnarray*}%
This shows that there exists a constant $r_{0}>0$ so that on $D\left(
R\right) \backslash D\left( r_{0}\right) ,$%
\begin{equation}
\Delta _{F}\psi \geq 0.  \label{B2}
\end{equation}%
Using (\ref{B1}) and (\ref{B2}), for the function $G:=\psi ^{2}u$ we have
that on $M\backslash D\left( r_{0}\right) ,$%
\begin{eqnarray}
\psi ^{2}\Delta _{F}G &\geq &3G^{2}S-cGS+G\Delta _{F}\psi ^{2}+2\left\langle
\nabla u,\nabla \psi ^{2}\right\rangle \psi ^{2}  \label{B3} \\
&\geq &3G^{2}S-cGS+2\left\langle \nabla \left( G\psi ^{-2}\right) ,\nabla
\psi ^{2}\right\rangle \psi ^{2}  \notag \\
&=&3G^{2}S-cGS+2\left\langle \nabla G,\nabla \psi ^{2}\right\rangle
-8\left\vert \nabla \psi \right\vert ^{2}G.  \notag
\end{eqnarray}%
By (\ref{B1'}) and the estimate $Sf\geq c>0$ we have 
\begin{eqnarray*}
\left\vert \nabla \psi \right\vert ^{2}G &\leq &\frac{1}{R}G \\
&\leq &\frac{1}{c}SG.
\end{eqnarray*}%
Therefore, (\ref{B3}) becomes 
\begin{equation*}
\psi ^{2}\Delta _{F}G\geq \left( 3G^{2}-cG\right) S+2\left\langle \nabla
G,\nabla \psi ^{2}\right\rangle .
\end{equation*}%
Now the maximum principle implies that $G$ must be bounded. This proves (\ref%
{B0}) and hence the theorem.
\end{proof}

We can now improve the covariant derivative estimate in Proposition \ref%
{nabla_Rm} as well.

\begin{theorem}
\label{Scalar}Let $\left( M,g,f\right) $ be a four dimensional shrinking
gradient Ricci soliton with bounded scalar curvature $S\leq A.$ Then 
\begin{equation*}
\left\vert \nabla \mathrm{Rm}\right\vert \leq C\,S\text{ \ on }M
\end{equation*}%
for a constant $C>0$ depending only on $A$ and $\sup_{D\left( r_{0}\right)
}\left\vert \mathrm{Rm}\right\vert $. In particular, 
\begin{equation*}
\sup_{M}\,\left\vert \nabla \ln S\right\vert \leq C.
\end{equation*}
\end{theorem}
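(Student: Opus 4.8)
The plan is to mimic the structure of the proof of Theorem \ref{Rc_improved}, but applied to the quantity $\left\vert \nabla \mathrm{Rm}\right\vert ^{2}$ instead of $\left\vert \mathrm{Ric}\right\vert ^{2}$, using the already-established bound $\left\vert \mathrm{Rm}\right\vert \leq C\,S$ from Theorem \ref{Rc_improved} (equivalently $\left\vert \mathrm{Rm}\right\vert \leq c\left\vert \mathrm{Ric}\right\vert$ by \eqref{Rm_bd}) to absorb the bad terms. First I would recall the computation from the proof of Proposition \ref{nabla_Rm}, namely \eqref{u1}:
\begin{equation*}
\Delta _{f}\left\vert \nabla \mathrm{Rm}\right\vert ^{2}\geq 2\left\vert
\nabla ^{2}\mathrm{Rm}\right\vert ^{2}+3\left\vert \nabla \mathrm{Rm}
\right\vert ^{2}-c\left\vert \mathrm{Rm}\right\vert \left\vert \nabla
\mathrm{Rm}\right\vert ^{2}.
\end{equation*}
Since $\left\vert \mathrm{Rm}\right\vert \leq C\,S \leq C\,A$ is now known to be bounded, the term $c\left\vert \mathrm{Rm}\right\vert \left\vert \nabla \mathrm{Rm}\right\vert ^{2}$ is controlled by $C\,S\left\vert \nabla \mathrm{Rm}\right\vert ^{2}$, which is exactly the form needed to run the barrier argument with weight $S^{-2}$.

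Next I would compute $\Delta _{F}\!\left( \left\vert \nabla \mathrm{Rm}\right\vert ^{2}S^{-2}\right)$ with $F = f - 2\ln S$, in complete analogy with \eqref{AA1}–\eqref{AA2}. Writing $u := \left\vert \nabla \mathrm{Rm}\right\vert ^{2}S^{-2}$, the cross terms $\left\langle \nabla S^{-2}, \nabla \left\vert \nabla \mathrm{Rm}\right\vert ^{2}\right\rangle$ are handled by Cauchy–Schwarz: one part is absorbed into the $2\left\vert \nabla ^{2}\mathrm{Rm}\right\vert ^{2}S^{-2}$ term (using $\left\vert \nabla \left\vert \nabla \mathrm{Rm}\right\vert ^{2}\right\vert \leq 2\left\vert \nabla ^{2}\mathrm{Rm}\right\vert \left\vert \nabla \mathrm{Rm}\right\vert$), another part into the $\left\vert \nabla S\right\vert ^{2}S^{-4}\left\vert \nabla \mathrm{Rm}\right\vert ^{2}$ term that appears with a good sign after differentiating $S^{-2}$, and the remaining piece becomes the drift $-2\left\langle \nabla \ln S, \nabla u\right\rangle$ that converts $\Delta_f$ into $\Delta_F$. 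After the dust settles one should obtain an inequality of the shape
\begin{equation*}
\Delta _{F}u \geq c_{0}\,u^{2}\,S - c\,u\,S - c\,S
\end{equation*}
on $M\backslash D\left( r_{0}\right)$, for positive constants $c_0, c$; here the $-c\,S$ term (rather than a constant) arises because $\left\vert \nabla \mathrm{Rm}\right\vert ^{2}S^{-1}$ is already bounded by Proposition \ref{nabla_Rm}, so $c\left\vert \mathrm{Rm}\right\vert \left\vert \nabla \mathrm{Rm}\right\vert ^{2}S^{-2} \leq c\,S \cdot \left\vert \nabla \mathrm{Rm}\right\vert ^{2}S^{-2} \leq c\left\vert \nabla \mathrm{Rm}\right\vert S^{-1}\cdot(\ldots)$ can be reorganized into $u\,S$ and $S$ terms via Cauchy–Schwarz.

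Then I would run exactly the cutoff/maximum-principle argument from Theorem \ref{Rc_improved}: take $\psi(t) = (R-t)/R$ on $[0,R]$, use Lemma \ref{Scalar_first} to show $\Delta_F \psi \geq 0$ on $D(R)\backslash D(r_0)$ for $R$ large, set $G := \psi^2 u$, and observe that at an interior maximum the terms $\left\vert \nabla \psi\right\vert^2 G \leq R^{-1}G \leq c^{-1}SG$ (using $Sf \geq c$ from \cite{CLY}) get absorbed, forcing $3G^2 - cG - c \leq 0$ at that point, hence $G$ bounded uniformly in $R$. Letting $R\to\infty$ gives $u \leq C$ on $M\backslash D(r_0)$, i.e. $\left\vert \nabla \mathrm{Rm}\right\vert \leq C\,S$; the bound on $D(r_0)$ follows since everything is smooth on a compact set and $S \geq c(r_0) > 0$ there. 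Finally, $\left\vert \nabla \mathrm{Rm}\right\vert \geq \left\vert \nabla \mathrm{Ric}\right\vert \geq \tfrac12 \left\vert \nabla S\right\vert$ (from the contracted second Bianchi-type identity $\nabla_k R_{jk} = \tfrac12 \nabla_j S$ in \eqref{id}) is not quite what is needed; instead, from \eqref{id} we have $\left\vert \nabla S\right\vert = 2\left\vert R_{jk}f_k\right\vert \leq 2\left\vert \mathrm{Ric}\right\vert \left\vert \nabla f\right\vert \leq c\,S\sqrt{f}$, which is too weak — so for the last assertion I would instead directly use $\left\vert \nabla S\right\vert \leq c\left\vert \nabla \mathrm{Ric}\right\vert \leq c\left\vert \nabla \mathrm{Rm}\right\vert \leq C\,S$, giving $\left\vert \nabla \ln S\right\vert = \left\vert \nabla S\right\vert / S \leq C$. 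The main obstacle I anticipate is the bookkeeping in the Bochner-type computation for $\Delta_F u$: getting all three dangerous terms ($\left\vert \nabla S\right\vert^2$-terms, the $\left\vert \mathrm{Rm}\right\vert\left\vert\nabla\mathrm{Rm}\right\vert^2$-term, and the cross term) to land on the correct side with the right powers of $S$, so that what remains is genuinely of the form $c_0 u^2 S - c u S - c S$ with a strictly positive leading coefficient — in particular verifying that the coefficient $3$ (or whatever positive constant) survives after the Cauchy–Schwarz splittings, exactly as the coefficient $3$ survived in Lemma \ref{DI_improved}.
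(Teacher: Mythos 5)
There is a genuine gap at the heart of your argument: the differential inequality $\Delta_F u \geq c_0\,u^2 S - c\,u\,S - c\,S$ for $u := \left\vert \nabla \mathrm{Rm}\right\vert^2 S^{-2}$ cannot be obtained "in complete analogy with (\ref{AA1})--(\ref{AA2})", because the term that produced the quadratic good term there has no analogue here. In Lemma \ref{DI_improved} the quartic term $4\left\vert \mathrm{Ric}\right\vert^4 S^{-3} = 4u^2S$ arises because $\Delta_f(S^{-2})$ contributes $4\left\vert \mathrm{Ric}\right\vert^2 S^{-3}$, which is then multiplied by the \emph{same} quantity $\left\vert \mathrm{Ric}\right\vert^2$ being estimated. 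For $\left\vert \nabla \mathrm{Rm}\right\vert^2 S^{-2}$ the corresponding term is $4\left\vert \mathrm{Ric}\right\vert^2 S^{-3}\left\vert \nabla \mathrm{Rm}\right\vert^2 \leq C\,u\,S$ (by Theorem \ref{Rc_improved}), i.e.\ only linear in $u$; and no other term in the Bochner computation (nor in (\ref{u1})) produces $\left\vert \nabla \mathrm{Rm}\right\vert^4 S^{-3}$. After the Cauchy--Schwarz absorptions you describe, what actually survives is the \emph{linear} inequality $\Delta_F w \geq w(1-cS)$ for $w=\left\vert \nabla \mathrm{Rm}\right\vert^2S^{-2}$, which is what the paper derives. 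Consequently your maximum-principle step "forcing $3G^2-cG-c\leq 0$" has nothing to act on: with only a linear inequality whose zeroth-order coefficient $1-cS$ can be negative where $S$ is not small, the argument as you set it up does not close.

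What the paper does instead, and what your write-up is missing, is a case analysis at the maximum point $q$ of $G=\psi^2 w$: if $q\in D(r_0)$ one is done; if $S(q)>1/c$ one bounds $w(q)$ directly from the already-known global bound $\left\vert \nabla \mathrm{Rm}\right\vert \leq c$ of Theorem \ref{Main1}; and if $1-cS(q)\geq 0$, the maximum principle together with $\Delta_F\psi \geq f/(2R)$ (via Lemma \ref{Scalar_first}) forces $f(q)\geq R-6$, hence $\psi(q)\leq 6/R$, and then $G(q)\leq \frac{36}{R^2}\sup_{D(R)} w \leq c/R$, where the last step uses Proposition \ref{nabla_Rm} ($\left\vert \nabla \mathrm{Rm}\right\vert^2 S^{-1}\leq C$) together with $Sf\geq c$ to get $w\leq CS^{-1}\leq CR$ on $D(R)$. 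So the earlier, weaker estimate of Proposition \ref{nabla_Rm} and the uniform bound of Theorem \ref{Main1} are not just background facts but essential inputs to closing the maximum principle with a merely linear inequality. The remaining ingredients of your proposal (the inequality (\ref{u1}), the weight $S^{-2}$ and drift $F=f-2\ln S$, the cutoff $\psi=(R-t)/R$ with Lemma \ref{Scalar_first}, and the concluding step $\left\vert \nabla S\right\vert \leq c\left\vert \nabla \mathrm{Rm}\right\vert \leq CS$ giving $\left\vert \nabla \ln S\right\vert \leq C$) do match the paper, but you need to replace the unjustified quadratic inequality and the ensuing one-line maximum-principle conclusion with an argument of the above type.
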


\begin{proof}
Using (\ref{u1}) we get 
\begin{align}
\Delta _{f}\left( \left\vert \nabla \mathrm{Rm}\right\vert ^{2}S^{-2}\right)
& \geq S^{-2}\left( 2\left\vert \nabla ^{2}\mathrm{Rm}\right\vert
^{2}+3\left\vert \nabla \mathrm{Rm}\right\vert ^{2}-c\left\vert \mathrm{Rm}%
\right\vert \left\vert \nabla \mathrm{Rm}\right\vert ^{2}\right)  \label{u2}
\\
& +\left\vert \nabla \mathrm{Rm}\right\vert ^{2}\left( -2S^{-2}+4\left\vert 
\mathrm{Ric}\right\vert ^{2}S^{-3}+6\left\vert \nabla S\right\vert
^{2}S^{-4}\right)  \notag \\
& +2\left\langle \nabla \left\vert \nabla \mathrm{Rm}\right\vert ^{2},\nabla
S^{-2}\right\rangle .  \notag
\end{align}%
Observe that

\begin{eqnarray*}
2\left\langle \nabla \left\vert \nabla \mathrm{Rm}\right\vert ^{2},\nabla
S^{-2}\right\rangle &=&\left\langle \nabla \left( \left\vert \nabla \mathrm{%
Rm}\right\vert ^{2}S^{-2}\right) S^{2},\nabla S^{-2}\right\rangle
+\left\langle \nabla \left\vert \nabla \mathrm{Rm}\right\vert ^{2},\nabla
S^{-2}\right\rangle \\
&\geq &\left\langle \nabla \left( \left\vert \nabla \mathrm{Rm}\right\vert
^{2}S^{-2}\right) ,\nabla S^{-2}\right\rangle S^{2}+\left\vert \nabla 
\mathrm{Rm}\right\vert ^{2}S^{-2}\left\langle \nabla S^{2},\nabla
S^{-2}\right\rangle \\
&&-4\left\vert \nabla ^{2}\mathrm{Rm}\right\vert \left\vert \nabla
S\right\vert \left\vert \nabla \mathrm{Rm}\right\vert S^{-3} \\
&\geq &-2\left\langle \nabla \left( \left\vert \nabla \mathrm{Rm}\right\vert
^{2}S^{-2}\right) ,\nabla \ln S\right\rangle -6\left\vert \nabla \mathrm{Rm}%
\right\vert ^{2}\left\vert \nabla S\right\vert ^{2}S^{-4} \\
&&-2\left\vert \nabla ^{2}\mathrm{Rm}\right\vert ^{2}S^{-2}.
\end{eqnarray*}%
It now follows from (\ref{u2})\ and Theorem \ref{Rc_improved} that the
function $w:=\left\vert \nabla \mathrm{Rm}\right\vert ^{2}S^{-2}$ verifies
the inequality 
\begin{eqnarray}
\Delta _{F}w &\geq &w-c\left\vert \mathrm{Rm}\right\vert w  \label{u3} \\
&\geq &w\left( 1-cS\right) ,  \notag
\end{eqnarray}
where $F:=f-2\ln S$. We now show that a function $w\geq 0$ satisfying (\ref%
{u3}) must be bounded. Let $\psi \left( t\right) =\frac{R-t}{R}$ on $\left[
0,R\right] $ and $\psi =0$ for $t\geq R$. We view $\psi \left( f\right) $ as
a cut-off function on $D\left( R\right) .$

For $G:=\psi ^{2}w$ we have that 
\begin{equation}
\Delta _{F}G\geq G\left( 1-cS\right) +2\psi ^{-1}\left( \Delta _{F}\psi
\right) G-6\psi ^{-2}\left\vert \nabla \psi \right\vert ^{2}G+2\psi
^{-2}\left\langle \nabla G,\nabla \psi ^{2}\right\rangle .  \label{u4}
\end{equation}%
Let $q\in D\left( R\right) $ be the maximum point of $G$. If $q\in D\left(
r_{0}\right) ,$ then it follows immediately that $w$ is bounded on $D\left( 
\frac{R}{2}\right) .$ So without loss of generality we may assume $q\in
D\left( R\right) \backslash D\left( r_{0}\right)$. Furthermore, if $S\left(
q\right) >\frac{1}{c},$ where $c>0$ is the constant in (\ref{u4}), then from
the definition of $w$ and Theorem \ref{Main1} one sees that $G\leq G(q) \leq
C$ on $D\left( R\right) $. Again, this proves that $w$ is bounded on $%
D\left( \frac{R}{2}\right) $. So we may assume in (\ref{u4}) that $%
1-cS\left( q\right) \geq 0$. Now the maximum principle implies that at $q$
we have%
\begin{equation}
0\geq \psi ^{-1}\left( \Delta _{F}\psi \right) -3\psi ^{-2}\left\vert \nabla
\psi \right\vert ^{2}.  \label{u5}
\end{equation}

Since $q\in D\left( R\right) \backslash D\left( r_{0}\right) $, we can
estimate 
\begin{eqnarray*}
\Delta _{F}\psi &=&-\frac{1}{R}\Delta _{f}\left( f\right) +\frac{2}{R}%
\left\langle \nabla \ln S,\nabla f\right\rangle \\
&\geq &\frac{f-2}{R}-\frac{2}{R}\left\vert \nabla \ln S\right\vert \sqrt{f}
\\
&\geq &\frac{f-c\,\sqrt{f}\,\ln (f+2)-2}{R} \\
&\geq &\frac{f}{2R},
\end{eqnarray*}%
where in the third line we have used Lemma \ref{Scalar_first}. Therefore, (%
\ref{u5}) implies that at $q,$ 
\begin{equation*}
\frac{f}{R}\psi \leq 6\left\vert \nabla \psi \right\vert ^{2}\leq 6\frac{1}{%
R^{2}}f.
\end{equation*}%
This means $f\left( q\right) \geq R-6$ and

\begin{eqnarray*}
\frac{1}{4}\sup_{D\left( \frac{R}{2}\right) }\left( \left\vert \nabla 
\mathrm{Rm}\right\vert ^{2}S^{-2}\right) &\leq &\sup_{D\left( \frac{R}{2}%
\right) }G \\
&\leq &G\left( q\right) \\
&\leq &\frac{36}{R^{2}}\sup_{D\left( R\right) }\left( \left\vert \nabla 
\mathrm{Rm}\right\vert ^{2}S^{-2}\right) \\
&\leq &\frac{c}{R},
\end{eqnarray*}%
where in the last line we have used Proposition \ref{nabla_Rm} and $Sf\geq
c>0$. This again proves that $\left\vert \nabla \mathrm{Rm}\right\vert
^{2}S^{-2}$ is bounded. In conclusion, we have proved that%
\begin{equation*}
\sup_{D\left( \frac{R}{2}\right) }\left( \left\vert \nabla \mathrm{Rm}%
\right\vert S^{-1}\right) \leq c.
\end{equation*}%
Since $R$ is arbitrary, this proves the theorem.
\end{proof}

\section{Curvature lower bound}

In this section we prove Theorem \ref{Lower_Bound}. The argument uses the
estimates from the previous sections and ideas of Hamilton-Ivey pinching
estimate for three dimensional Ricci flows.

\begin{theorem}
Let $\left( M,g,f\right) $ be a four dimensional shrinking gradient Ricci
soliton with bounded scalar curvature. Then the curvature operator is
bounded below by 
\begin{equation}
\mathrm{Rm}\geq -\left( \frac{c}{\ln f}\right) ^{\frac{1}{4}}.  \label{bound}
\end{equation}
\end{theorem}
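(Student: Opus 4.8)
The plan is to follow the Hamilton--Ivey strategy adapted to the soliton setting, using the strong curvature estimates of the previous sections as black boxes. The key point is that on a four dimensional shrinking soliton with bounded scalar curvature we now know $|\mathrm{Rm}| \leq c\,S$ (Theorem \ref{Rc_improved}), $|\nabla \mathrm{Rm}| \leq c\,S$ (Theorem \ref{Scalar}), and $S \geq c/f$ together with $S \leq A$; in particular $|\mathrm{Rm}| \to 0$ at infinity since $S \to 0$ there (as $1/f \leq cS$ fails the other way... rather $S\le A$ and $Sf\ge c$ only give $S\sim 1/f$ is not forced, but $|\mathrm{Rm}|\le cS\le cA$ is bounded and the relevant smallness is extracted below). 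Let $\nu = \nu(x)$ denote the lowest eigenvalue of the curvature operator $\mathrm{Rm}$ at $x$, so $\mathrm{Rm} \geq -\nu^-$ where $\nu^- = \max(-\nu,0)$, and set $\lambda = \nu^-$. We want to bound $\lambda$ from above by $(c/\ln f)^{1/4}$.

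First I would derive a differential inequality for $\lambda$ (or a smooth proxy for it) using the evolution equation $\Delta_f \mathrm{Rm} = \mathrm{Rm} + \mathrm{Rm} \ast \mathrm{Rm}$ from (\ref{id}). Applying the standard eigenvalue-of-curvature-operator computation (as in Hamilton's tensor maximum principle, with the Uhlenbeck trick replaced here by a direct support-function argument), the lowest eigenvalue satisfies, in the barrier/viscosity sense,
\begin{equation*}
\Delta_f \lambda \leq \lambda - c\,\lambda^2 + c\,S\,\lambda + \text{(nonnegative ODE terms)},
\end{equation*}
where the quadratic reaction term $-c\lambda^2$ comes from the $\mathrm{Rm}\ast\mathrm{Rm}$ piece; in dimension four the algebraic structure of $\mathrm{Rm}\ast\mathrm{Rm}$ acting on the most negative eigenvalue produces a term that is negative and of size $\lambda^2$ as long as the other eigenvalues are controlled, which they are by $|\mathrm{Rm}| \leq cS$. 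The term $cS\lambda$ collects contributions where the positive eigenvalues (bounded by $cS$) interact with $\lambda$. The crucial feature is that the reaction satisfies a pinching-type inequality: heuristically, $\lambda$ cannot be large compared to $S$ without the quadratic term forcing it down.

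The main step is then a maximum principle argument on the sublevel sets $D(R)$, in the spirit of Proposition \ref{Rc} and Theorem \ref{Rc_improved}. I would introduce the quantity that measures the failure of the pinching — something like $\Phi = \lambda \cdot (\ln f)^{1/4}$, or more robustly work with $\lambda^4 \ln f$ — and show, using the differential inequality for $\lambda$ together with $\Delta_f(\ln f) \le \tfrac{n}{2f} - 1$ from (\ref{delta}) and the bound $|\nabla \ln f|^2 = |\nabla f|^2/f^2 \le 1/f \cdot$(const), that $\Phi$ satisfies $\Delta_f \Phi \leq (\text{small})\,\Phi^2 - \Phi + (\text{lower order})$ on $M \setminus D(r_0)$, after which a cutoff $\psi(f)$ supported in $D(R)$ and the maximum principle give $\sup_{D(R/2)} \Phi \leq c$, independent of $R$. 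The logarithmic gain is exactly what one expects from the Hamilton--Ivey mechanism: the reaction term $-c\lambda^2$ integrates against the slowly-growing weight $\ln f$ to beat the linear term $\lambda$ only at the rate $(\ln f)^{-1/4}$. The conclusion (\ref{bound}) follows, and combined with (\ref{f}) relating $f$ and $r^2$ this yields the stated bound $\mathrm{Rm} \geq -(c/\ln r)^{1/4}$ of Theorem \ref{Lower_Bound} (absorbing the factor $2$ from $f \sim r^2/4$ into $c$).

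The hard part will be two-fold: first, making the eigenvalue differential inequality rigorous, since $\lambda$ is only Lipschitz — this requires either a careful barrier argument at points where the lowest eigenspace is multidimensional, or replacing $\lambda$ by a smooth symmetric function of the curvature operator (e.g. working with $\mathrm{tr}(e^{-t\,\mathrm{Rm}})$-type regularizations) and passing to the limit. Second, and more delicate, is getting the \emph{exact} power $1/4$: one must track precisely how the quadratic reaction term's coefficient degrades when some eigenvalues of $\mathrm{Rm}$ are comparable to $S$ rather than to $\lambda$, and balance this against the growth of $\ln f$. I expect the computation to force a choice of exponent in the test function (the "$1/4$") that is dictated by the ratio between the linear coefficient ($1$, from $\Delta_f \mathrm{Rm} = \mathrm{Rm} + \cdots$) and the structure of the four-dimensional curvature reaction term, and verifying that no better power is obtainable — or that this power actually closes the argument — will be where the real work lies.
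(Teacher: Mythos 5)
There is a genuine gap, and it lies at the very first step of your plan: the differential inequality you posit for the lowest eigenvalue of the \emph{four-dimensional} curvature operator is not justified, and the Hamilton--Ivey mechanism does not apply directly in dimension four. The favorable reaction structure you invoke — that $\mathrm{Rm}\ast\mathrm{Rm}$ acting on the most negative eigenvalue produces a term of size $-c\lambda^{2}$ dominating the rest — is precisely the three-dimensional identity $\left( \nu ^{2}+\lambda \mu \right) S-2\left\vert \mathrm{Ric}\right\vert^{2}\nu =\lambda ^{2}\left( \mu -\nu \right) +\mu ^{2}\left( \lambda -\nu\right)\geq 0$, which uses that in dimension three the full curvature is an explicit algebraic function of the Ricci tensor. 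In dimension four the curvature operator has six eigenvalues and an independent Weyl part ($\mathrm{Rm}^{\#}$ involves the $\mathfrak{so}(4)$ structure constants), so knowing $|\mathrm{Rm}|\leq cS$ only bounds the bad reaction terms by $cS^{2}$, which is of the \emph{same} order as $\lambda^{2}$ exactly in the regime $\lambda\sim -S$ that you need to rule out; no pinching toward nonnegativity follows. This is why the paper does not argue on $\wedge^{2}TM$ at all: it first uses Theorem \ref{Scalar} to show $\left\vert R_{ijk4}\right\vert \leq cSf^{-\frac{1}{2}}$, reducing the problem to the restriction of the curvature to the level sets $\Sigma=\{f=t\}$, and then, since $\Sigma$ is three dimensional, transfers everything to the eigenvalues $\lambda_{1}\leq\lambda_{2}\leq\lambda_{3}$ of $\mathrm{Ric}$ restricted to $T\Sigma$, obtaining from $\Delta_{f}R_{ij}=R_{ij}-2R_{ikjl}R_{kl}$ the genuine Hamilton--Ivey inequality $\Delta _{f}\nu \leq \nu -\nu ^{2}-\lambda \mu +cSf^{-\frac{1}{2}}$ in the barrier sense, with $\nu=S-2\lambda_{3}$. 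Your proposal never addresses the mixed components $R_{ijk4}$ nor supplies a substitute for this dimensional reduction, which is the essential idea of the proof.

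The second, lesser, discrepancy is the mechanism producing the exponent $1/4$. Your candidates $\Phi=\lambda(\ln f)^{1/4}$ or $\lambda^{4}\ln f$ with a direct maximum principle do not reflect how the estimate closes. The paper works with $u=\nu/S$ and the drift $F=f-2\ln S$ (so that the $\left\vert \nabla S\right\vert^{2}S^{-3}$ terms cancel, using Theorem \ref{Rc_improved} for the a priori bound $u>-c_{0}$ and Theorem \ref{Scalar} / Lemma \ref{Scalar_first} to control $\nabla\ln S$ in $\Delta_{F}$ of the cutoff), and runs the maximum principle on $D(R)\setminus D(\ln R)$ for the barrier $w=u+kf^{-\varepsilon }+\varepsilon S^{-1}$ with $\varepsilon =(\ln R)^{-1/2}$ and $k=c_{0}(\ln R)^{\varepsilon }$. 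At a negative interior minimum one either gains $S\geq 1/c_{1}$, after which the three-dimensional algebra forces $|\lambda|\leq c\sqrt{\varepsilon}$ and then $-\nu\leq c\sqrt{\varepsilon}$, or the cutoff term dominates and $f(q)\geq R-6$; in both cases $\nu\geq -c\sqrt{\varepsilon}=-c(\ln R)^{-1/4}$, which is where the power $1/4$ actually comes from. So even granting your eigenvalue inequality, the outline as written would not produce the stated bound without rebuilding this barrier argument.
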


\begin{proof}
Note that Theorem \ref{Scalar} implies 
\begin{equation}
\left\vert R_{ijk4}\right\vert \leq c\frac{\left\vert \nabla \mathrm{Ric}%
\right\vert }{\sqrt{f}}\leq cSf^{-\frac{1}{2}}.  \label{t-1}
\end{equation}%
Hence, to establish (\ref{bound}) for the curvature operator of $M,$ it is
enough to do so for its restriction to the subspace $\wedge ^{2}\left(
T\Sigma \right) $. By (\ref{t-1}) and Proposition \ref{Curv}, it is in turn
enough to establish (\ref{bound}) for the curvature operator of $\Sigma $.
To achieve this, we diagonalize $\mathrm{Rm}^{\Sigma }$ on $\wedge
^{2}\left( T\Sigma \right) .$ Since $\Sigma $ is three dimensional, it is
possible to choose an orthonormal frame $\left\{ e_{1},e_{2},e_{3}\right\} $
of $T\Sigma $ such that $e_{a}\wedge e_{b}$ are eigenvectors of the
curvature operator $\mathrm{Rm}^{\Sigma }$. Let $\lambda _{1}^{\Sigma }\leq
\lambda _{2}^{\Sigma }\leq \lambda _{3}^{\Sigma }$ be the eigenvalues of $%
\mathrm{Ric}^{\Sigma }.$ Then 
\begin{eqnarray*}
\nu ^{\Sigma } &=&\lambda _{1}^{\Sigma }+\lambda _{2}^{\Sigma }-\lambda
_{3}^{\Sigma } \\
\lambda ^{\Sigma } &=&\lambda _{1}^{\Sigma }+\lambda _{3}^{\Sigma }-\lambda
_{2}^{\Sigma } \\
\mu ^{\Sigma } &=&\lambda _{2}^{\Sigma }+\lambda _{3}^{\Sigma }-\lambda
_{1}^{\Sigma }
\end{eqnarray*}%
are the eigenvalues of $\mathrm{Rm}^{\Sigma }$ and $\nu ^{\Sigma }\leq
\lambda ^{\Sigma }\leq \mu ^{\Sigma }$. Our goal is to show that 
\begin{equation}
\nu ^{\Sigma }\geq -\left( \frac{c}{\ln f}\right) ^{\frac{1}{4}}.  \label{t0}
\end{equation}%
Restrict the Ricci curvature of $M$ to $\Sigma $ and let $\lambda _{1}\leq
\lambda _{2}\leq \lambda _{3}$ be the eigenvalues of the resulting operator.
Since 
\begin{eqnarray*}
\nu ^{\Sigma } &=&S^{\Sigma }-2\lambda _{3}^{\Sigma } \\
&=&S^{\Sigma }-2R_{33}^{\Sigma },
\end{eqnarray*}%
by Proposition \ref{Curv} and (\ref{t-1}) we have

\begin{eqnarray*}
\nu ^{\Sigma } &\geq &S-2R_{33}-c\sqrt{Sf^{-1}} \\
&\geq &S-2\lambda _{3}-c\sqrt{Sf^{-1}}.
\end{eqnarray*}%
Therefore, we conclude that (\ref{t0}) follows if we can show that 
\begin{equation}
\nu \geq -\left( \frac{c}{\ln f}\right) ^{\frac{1}{4}},  \label{t1}
\end{equation}%
where 
\begin{equation*}
\nu :=\lambda _{1}+\lambda _{2}-\lambda _{3}.
\end{equation*}%
We will also denote 
\begin{eqnarray}
\lambda &=&\lambda _{1}+\lambda _{3}-\lambda _{2}  \label{t1'} \\
\mu &=&\lambda _{2}+\lambda _{3}-\lambda _{1}.  \notag
\end{eqnarray}%
Note that $\nu \leq \lambda \leq \mu .$

We now prove (\ref{t1}). By Proposition \ref{Curv} and (\ref{t-1}) we know
that 
\begin{eqnarray}
R_{abcd} &=&\left( R_{ac}g_{bd}-R_{ad}g_{bc}+R_{bd}g_{ac}-R_{bc}g_{ad}\right)
\label{t2} \\
&&-\frac{S}{2}\left( g_{ac}g_{bd}-g_{ad}g_{bc}\right) +O\left( \sqrt{Sf^{-1}}%
\right) .  \notag
\end{eqnarray}

It is easy to see that this implies 
\begin{eqnarray*}
\Delta _{f}R_{ac} &=&R_{ac}-2R_{aicj}R_{ij} \\
&=&R_{ac}-2R_{abcd}R_{bd}+O\left( Sf^{-1}\right) \\
&=&R_{ac}-3SR_{ac}+S^{2}g_{ac}+4R_{ad}R_{dc}-2\left\vert \mathrm{Ric}%
\right\vert ^{2}g_{ac}+O\left( Sf^{-\frac{1}{2}}\right) .
\end{eqnarray*}%
This means that, in the sense of barrier, 
\begin{equation*}
\Delta _{f}\lambda _{3}\geq \lambda _{3}-3S\lambda _{3}+S^{2}+4\lambda
_{3}^{2}-2\left\vert \mathrm{Ric}\right\vert ^{2}-cSf^{-\frac{1}{2}}.
\end{equation*}%
Since $\nu =S-2\lambda _{3}$, it follows that 
\begin{equation*}
\Delta _{f}\nu \leq \nu +6S\lambda _{3}-2S^{2}-8\lambda _{3}^{2}+2\left\vert 
\mathrm{Ric}\right\vert ^{2}+cSf^{-\frac{1}{2}}.
\end{equation*}%
A straightforward computation, using (\ref{t1'}) and (\ref{t-1}), then
implies that 
\begin{equation}
\Delta _{f}\nu \leq \nu -\nu ^{2}-\lambda \mu +cSf^{-\frac{1}{2}}.
\label{t3}
\end{equation}%
Modulo the error term $Sf^{-\frac{1}{2}},$ this is the inequality one gets
for three dimensional shrinking gradient Ricci solitons.

Let 
\begin{equation*}
F:=f-2\ln S.
\end{equation*}%
We have 
\begin{eqnarray*}
&&\Delta _{F}\left( \nu S^{-1}\right) \\
&=&\left( \Delta _{f}\nu \right)S^{-1}+\nu \left( \Delta _{f}S^{-1}\right)
+2\left\langle \nabla \nu ,\nabla S^{-1}\right\rangle +2\left\langle \nabla
\ln S,\nabla \left( \nu S^{-1}\right)\right\rangle \\
&\leq &\left( \nu -\nu ^{2}-\lambda \mu +cSf^{-\frac{1}{2}}\right)
S^{-1}+2\left\langle \nabla \ln S,\nabla \left( \nu S^{-1}\right)
\right\rangle \\
&&+\nu \left( -S^{-1}+2\left\vert \mathrm{Ric}\right\vert
^{2}S^{-2}+2\left\vert \nabla S\right\vert ^{2}S^{-3}\right) \\
&&+2\left\langle \nabla \left( \nu S^{-1}\right) ,\nabla S^{-1}\right\rangle
S+2\left\langle \nabla S,\nabla S^{-1}\right\rangle \left( \nu S^{-1}\right)
\\
&=&-S^{-2}\left( \left( \nu ^{2}+\lambda \mu \right) S-2\left\vert \mathrm{%
Ric}\right\vert ^{2}\nu \right) +cf^{-\frac{1}{2}}.
\end{eqnarray*}%
It is easy to see that%
\begin{equation*}
\left( \nu ^{2}+\lambda \mu \right) S-2\left\vert \mathrm{Ric}\right\vert
^{2}\nu =\lambda ^{2}\left( \mu -\nu \right) +\mu ^{2}\left( \lambda -\nu
\right) +O\left( S^{2}f^{-1}\right) .
\end{equation*}
Hence, the function 
\begin{equation*}
u:=\frac{\nu }{S}
\end{equation*}%
satisfies, in the sense of barrier, the following inequality 
\begin{equation}
\Delta _{F}u\leq -S^{-2}\left( \lambda ^{2}\left( \mu -\nu \right) +\mu
^{2}\left( \lambda -\nu \right) \right) +cf^{-\frac{1}{2}}.  \label{t4}
\end{equation}%
We remark that a function similar to $u$ was used to classify locally
conformally flat shrinking Ricci solitons of arbitrary dimension in \cite%
{ELM}. This function also appears in Hamilton-Ivey pinching estimate for
three dimensional ancient solutions \cite{CLN}.

We want to prove a lower bound for the function $u$ based on (\ref{t4}). For
this, let $R>r_{0}$ be large enough so that $R_{1}:=\ln R\geq r_{0}$.

According to Theorem \ref{Rc_improved}, there exists a constant $c_{0}>0$ so
that 
\begin{equation}
u>-c_{0}\text{ \ on }M.  \label{u}
\end{equation}

Consider the function 
\begin{equation*}
w:=u+kf^{-\varepsilon }+\varepsilon S^{-1},
\end{equation*}%
where 
\begin{eqnarray}
\varepsilon &=&\frac{1}{\sqrt{R_{1}}}  \label{k} \\
k &=&c_{0}\left( R_{1}\right) ^{\varepsilon }.  \notag
\end{eqnarray}%
The constant $c_{0}>0$ in (\ref{k}) is the same as that in (\ref{u}). The
choice of $k$ guarantees that 
\begin{equation}
w>0\text{ \ on \ \ }\partial D\left( R_{1}\right) .  \label{positive}
\end{equation}

On $M\backslash D\left( R_{1}\right) ,$ 
\begin{eqnarray*}
\Delta _{F}f^{-\varepsilon } &=&-\varepsilon \left( \Delta _{f}\left(
f\right) \right) f^{-\varepsilon -1}+\varepsilon \left( \varepsilon
+1\right) \left\vert \nabla f\right\vert ^{2}f^{-\varepsilon
-2}-2\varepsilon \left\langle \nabla \ln S,\nabla f\right\rangle
f^{-\varepsilon -1} \\
&\leq &\varepsilon \left( f-2\right) f^{-\varepsilon -1}+\varepsilon \left(
\varepsilon +1\right) f^{-\varepsilon -1}+2\varepsilon \left\vert \nabla \ln
S\right\vert f^{-\varepsilon -\frac{1}{2}} \\
&\leq &2\varepsilon f^{-\varepsilon },
\end{eqnarray*}%
where in the last line we have used Theorem \ref{Scalar}.

Next, we have that 
\begin{eqnarray*}
\Delta _{F}S^{-1} &=&\Delta _{f}S^{-1}+2\left\langle \nabla \ln S,\nabla
S^{-1}\right\rangle \\
&=&-\left( \Delta _{f}S\right) S^{-2}+2\left\vert \nabla S\right\vert
^{2}S^{-3}-2\left\vert \nabla S\right\vert ^{2}S^{-3} \\
&=&-S^{-1}+2\left\vert \mathrm{Ric}\right\vert ^{2}S^{-2} \\
&\leq &-S^{-1}+c,
\end{eqnarray*}%
where in the last line we have used Theorem \ref{Rc_improved}.

Hence, on $M\backslash D\left( R_{1}\right) , $ 
\begin{equation}
\Delta _{F}w\leq -S^{-2}\left( \lambda ^{2}\left( \mu -\nu \right) +\mu
^{2}\left( \lambda -\nu \right) \right) +2\varepsilon kf^{-\varepsilon
}-\varepsilon S^{-1}+c\varepsilon ,  \label{t5}
\end{equation}%
where we have used the fact that $cf^{-\frac{1}{2}}\leq c\left( R_{1}\right)
^{-\frac{1}{2}}=c\varepsilon $ on $M\backslash D\left( R_{1}\right) $.

Let $\phi \left( t\right) =\frac{R-t}{R}$ on $\left[ 0,R\right] $ and
consider the cutoff function $\phi \left( f\right) $ on $D\left( R\right).$
On $D\left( R\right) \backslash D\left( R_{1}\right)$ we have 
\begin{eqnarray}
\left\vert \nabla \phi \right\vert &=&\frac{\left\vert \nabla f\right\vert }{%
R}\leq \frac{1}{\sqrt{R}}  \label{t6} \\
\Delta _{F}\phi &=&\Delta _{f}\phi +2\left\langle \nabla \ln S,\nabla \phi
\right\rangle  \notag \\
&\geq &\frac{1}{R}\left( f-2\right) -\frac{c}{R}\sqrt{f}  \notag \\
&\geq &\frac{1}{2R}f.  \notag
\end{eqnarray}%
In the second line above we have used Theorem \ref{Scalar}.

Now define the function $G:=\phi ^{2}w$ on $M\backslash D\left( R_{1}\right)
, $ which is positive on $\partial D\left( R_{1}\right) $ by (\ref{positive}%
) and zero on $M\backslash D\left( R\right) $. Let us first assume that $G$
is negative somewhere in $D\left( R\right) \backslash D\left( R_{1}\right) $%
. Then there exists an interior point $q$ of $D\left( R\right) \backslash
D\left( R_{1}\right)$ at which $G$ achieves its minimum. In particular, $%
G\left( q\right) <0$ and $\nu \left( q\right) <0$.

Using (\ref{t6}) it follows that at $q,$%
\begin{eqnarray}
\ \ \ 0 &\leq &\phi ^{2}\Delta _{F}G  \label{t7} \\
&=&\phi ^{4}\Delta _{F}w+G\Delta _{F}\phi ^{2}+2\left\langle \nabla w,\nabla
\phi ^{2}\right\rangle \phi ^{2}  \notag \\
&\leq &-S^{-2}\left( \lambda ^{2}\left( \mu -\nu \right) +\mu ^{2}\left(
\lambda -\nu \right) \right) \phi ^{4}+\left( 2k\varepsilon f^{-\varepsilon
}-\varepsilon S^{-1}+c\varepsilon \ \right) \phi ^{4}  \notag \\
&&+\left( \phi \frac{f}{R}-6\left\vert \nabla \phi \right\vert ^{2}\right) G,
\notag
\end{eqnarray}
where we have used 
\begin{equation*}
2\phi \left( \Delta _{F}\phi \right) G\leq \frac{1}{R}\phi fG\text{ \ at }q.
\end{equation*}

We now discuss two cases.

\textbf{Case 1.} Suppose first that at $q$ we have 
\begin{equation*}
\left( \phi \frac{f}{R}-6\left\vert \nabla \phi \right\vert ^{2}\right)
G\leq 0.
\end{equation*}

Then, we see from (\ref{t7}) that 
\begin{equation}
S^{-2}\left( \lambda ^{2}\left( \mu -\nu \right) +\mu ^{2}\left( \lambda
-\nu \right) \right) \leq 2k\varepsilon f^{-\varepsilon }-\varepsilon
S^{-1}+c\varepsilon \ .  \label{t8}
\end{equation}
In particular, (\ref{t8}) implies that 
\begin{eqnarray*}
\varepsilon S^{-1} &\leq &2k\varepsilon f^{-\varepsilon }+c\varepsilon \\
&\leq &2k\varepsilon \left( R_{1}\right) ^{-\varepsilon }+c\varepsilon \\
&=&\left( 2c_{0}+c\right) \varepsilon .
\end{eqnarray*}%
Here we have used the definition of $k$ in (\ref{k}). This shows that there
exists a constant $c_{1}:=2c_{0}+c>0$ so that $S\left( q\right) \geq \frac{1%
}{c_{1}}>0$. Now (\ref{t8}) implies that 
\begin{eqnarray*}
\lambda ^{2}\left( \mu -\nu \right) +\mu ^{2}\left( \lambda -\nu \right)
&\leq &c\left( 2k\varepsilon f^{-\varepsilon }+c\varepsilon \right) \\
&\leq &c\varepsilon .
\end{eqnarray*}%
Hence, 
\begin{eqnarray}
\lambda ^{2}\left( \mu -\nu \right) &\leq &c\varepsilon  \label{t9} \\
\mu ^{2}\left( \lambda -\nu \right) &\leq &c\varepsilon .  \notag
\end{eqnarray}%
In addition, we know that $S(q)=\mu +\lambda +\nu \geq \frac{1}{c_{1}}>0$,
which implies that $\mu \geq \frac{1}{2c_{1}}$. Therefore, one concludes
from the first inequality in (\ref{t9}) (recall $\nu \left( q\right) <0$)
that $\left\vert \lambda \right\vert \leq c_{2}\sqrt{\varepsilon }$. Using
this in the second inequality of (\ref{t9}), we obtain 
\begin{eqnarray*}
-\nu -c_{2}\sqrt{\varepsilon } &\leq &\lambda -\nu \\
&\leq &\frac{c\varepsilon }{\mu ^{2}} \\
&\leq &c\varepsilon .
\end{eqnarray*}%
This proves that $-\nu \left( q\right) \leq c\sqrt{\varepsilon }$ and

\begin{equation*}
G\left( q\right) \geq -c\sqrt{\varepsilon }
\end{equation*}%
as $S\left( q\right) \geq \frac{1}{c_{1}}>0$. In conclusion, 
\begin{equation}
\inf_{D\left( \frac{R}{2}\right) \backslash D\left( R_{1}\right) }w\geq
4G\left( q\right) \geq -c\sqrt{\varepsilon }.  \label{t10}
\end{equation}

\textbf{Case 2.} Suppose now that at $q$ we have 
\begin{equation*}
\left( \phi \frac{f}{R}-6\left\vert \nabla \phi \right\vert ^{2}\right) G>0.
\end{equation*}%
Since $G\left( q\right) <0,$ we conclude that 
\begin{eqnarray*}
\frac{R-f}{R}\frac{f}{R} &\leq &6\left\vert \nabla \phi \right\vert ^{2} \\
&\leq &\frac{6}{R^{2}}f.
\end{eqnarray*}%
Hence, $f\left( q\right) \geq R-6$ and $\phi \left( q\right) \le \frac{6}{R}$%
. We now conclude that 
\begin{eqnarray*}
\inf_{D\left( \frac{R}{2}\right) \backslash D\left( R_{1}\right) }G &\geq
&G\left( q\right) \\
&=&w\left( q\right) \phi ^{2}\left( q\right) \\
&\geq &-\frac{c}{R^{2}},
\end{eqnarray*}%
where the last line follows from $w\geq u>-c_{0}$. In particular,

\begin{equation}
\inf_{D\left( \frac{R}{2}\right) \backslash D\left( R_{1}\right) }w\geq -%
\frac{c}{R^{2}}\text{.}  \label{t11}
\end{equation}

By (\ref{t10}) and (\ref{t11}) we conclude that if $G$ is negative somewhere
in $D\left( R\right) \backslash D\left( R_{1}\right) $, then on $D\left( 
\frac{R}{2}\right) \backslash D\left( R_{1}\right) $ 
\begin{equation}
\frac{\nu }{S}\geq -\frac{k}{f^{\varepsilon }}-\frac{\varepsilon }{S}-c\sqrt{%
\varepsilon }.  \label{t12}
\end{equation}%
Certainly, the same conclusion holds true if $G$ is non-negative on $D\left(
R\right) \backslash D\left( R_{1}\right) $. Therefore, from (\ref{k}) and (%
\ref{t12}) we see that on $D\left( \frac{R}{2}\right) \backslash D\left(
R_{1}\right) ,$ 
\begin{equation}
\nu \geq -c\left( \frac{R_{1}}{f}\right) ^{\varepsilon }-c\sqrt{\varepsilon }%
,  \label{final}
\end{equation}%
Recall that $\varepsilon =\frac{1}{\sqrt{\ln R}}$. So on $\Sigma \left( 
\frac{R}{2}\right) =\partial D\left( \frac{R}{2}\right) $ we get from (\ref%
{final}) that 
\begin{equation*}
\nu \geq -\left( \frac{c}{\ln R}\right) ^{\frac{1}{4}}.
\end{equation*}%
The constant $c$ depends only on $A$ and $\sup_{D\left( r_{0}\right)
}\left\vert \mathrm{Rm}\right\vert $. Since $R$ is arbitrary, this proves
the result.
\end{proof}

\section{Conical structure}

Our goal in this section is to prove the following theorem. 

\begin{theorem}
\label{cone} Let $\left( M,g,f\right) $ be a complete  four
dimensional shrinking gradient Ricci soliton with scalar curvature
converging to zero at infinity. Then there exists a cone $E_{0}$ such that $%
(M,g)$ is $C^{k}$ asymptotic to $E_{0}$ for all $k.$
\end{theorem}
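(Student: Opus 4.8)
The strategy is to first sharpen the curvature estimates of the previous sections into decay estimates, and then to describe the geometry of the level sets of $f$ under the normal flow, showing that the rescaled metric converges to a cone metric.

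\noindent\emph{Step 1: sharp decay of the curvature.} Theorems~\ref{Rc_improved} and \ref{Scalar} give $|\mathrm{Rm}|\le cS$, $|\nabla\mathrm{Rm}|\le cS$ and $|\nabla\ln S|\le c$, and \cite{CLY} gives $cf^{-1}\le S\le A$. Iterating the weighted Bochner and maximum principle arguments of Theorems~\ref{Main1} and \ref{Scalar}, applied now to $|\nabla^{k}\mathrm{Rm}|^{2}S^{-2}$ and using the lower order bounds already in hand, should yield $|\nabla^{k}\mathrm{Rm}|\le C_{k}S$ on $M$ for every $k$. The essential new point, and the only place the hypothesis $S\to0$ enters, is to prove the quadratic decay $S\le Cf^{-1}$. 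I would attempt this by a maximum principle for $fS$, starting from
\begin{equation*}
\Delta_{f}(fS)=2S-2f|\mathrm{Ric}|^{2}+4\,\mathrm{Ric}(\nabla f,\nabla f),
\end{equation*}
using $|\mathrm{Ric}|^{2}\ge S^{2}/4$ and the gradient bound $|\nabla\ln S|\le c$ to estimate $|\mathrm{Ric}(\nabla f,\nabla f)|\le\tfrac12|\nabla f||\nabla S|\le c\sqrt{f}\,S$; if the resulting differential inequality cannot be closed directly, I would instead run a blow-down argument, rescaling around points $x_{i}\to\infty$ and invoking $|\mathrm{Rm}|\le cS$ together with $S\to0$ to exclude a nonflat (in particular cylinder-type) limit. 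Either way one obtains $|\nabla^{k}\mathrm{Rm}|\le C_{k}f^{-1}$ for all $k\ge0$.

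\noindent\emph{Step 2: the normal flow and the link metric.} On $M\setminus D(r_{0})$ we have $|\nabla f|\ge1$, so the flow $\varphi_{s}$ of $X:=\nabla f/|\nabla f|^{2}$ exists for all $s\ge0$, satisfies $X(f)=1$, and carries $\Sigma(t)$ diffeomorphically onto $\Sigma(t+s)$. Fixing $\Sigma:=\Sigma(r_{0})$ and setting $\Phi(t,\sigma):=\varphi_{t-r_{0}}(\sigma)$ gives a diffeomorphism $\Phi:(r_{0},\infty)\times\Sigma\to M\setminus D(r_{0})$ with
\begin{equation*}
\Phi^{*}g=\frac{1}{|\nabla f|^{2}}\,dt^{2}+g_{t},
\end{equation*}
$g_{t}$ being the induced metric on $\Sigma(t)$ pulled back to $\Sigma$. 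From $\mathrm{Hess}(f)=\tfrac12 g-\mathrm{Ric}$ and $h_{ab}=f_{ab}/|\nabla f|$ one computes $\partial_{t}g_{t}=(g_{t}-2\,\mathrm{Ric}|_{T\Sigma(t)})/|\nabla f|^{2}$, so, since $|\nabla f|^{2}=f-S=t-S$ on $\Sigma(t)$,
\begin{equation*}
\partial_{t}\!\left(\frac{g_{t}}{t}\right)=\frac{S\,g_{t}}{t^{2}(t-S)}-\frac{2\,\mathrm{Ric}|_{T\Sigma(t)}}{t(t-S)}.
\end{equation*}
By Step~1 the right-hand side, measured in a fixed background metric on $\Sigma$, is $O(t^{-2})$, hence integrable; therefore $g_{t}/t$ converges in $C^{0}$ as $t\to\infty$ to a symmetric $2$-tensor $\gamma$ on $\Sigma$. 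Moreover $\gamma$ is positive definite, since the same estimate shows $g_{t}/t$ is uniformly two-sided comparable to $g_{t_{1}}/t_{1}$ for $t_{1}$ large. (The hypothesis $S\to0$ is again crucial: for the cylinder $S^{2}\times\mathbb{R}^{2}$ the analogous limit degenerates along the $S^{2}$ factor because $S$ does not decay.)

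\noindent\emph{Step 3: the cone, and $C^{k}$ convergence.} Put $\rho:=2\sqrt{f}$, so $t=\rho^{2}/4$ and $dt=\tfrac{\rho}{2}d\rho$, whence $\Phi^{*}g=\dfrac{\rho^{2}/4}{|\nabla f|^{2}}\,d\rho^{2}+g_{\rho^{2}/4}$. Let $E_{0}:=[0,\infty)\times\Sigma$ carry $g_{c}:=d\rho^{2}+\rho^{2}g_{\Sigma}$ with $g_{\Sigma}:=\tfrac14\gamma$. Since $\rho^{2}/(4|\nabla f|^{2})=(1-S/f)^{-1}\to1$ and $g_{t}/t\to\gamma$, a direct computation gives $\lambda^{-2}\rho_{\lambda}^{*}\Phi^{*}g\to g_{c}$ in $C^{0}_{loc}(E_{0},g_{c})$, i.e.\ $(M,g)$ is $C^{0}$ asymptotic to $E_{0}$. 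To upgrade to $C^{k}$ for every $k$, I would differentiate the evolution equation for $g_{t}$ in the $\Sigma$-directions and use the Gauss and Codazzi equations to express the tangential derivatives of $\mathrm{Ric}|_{T\Sigma(t)}$ and of $h$ through $\nabla^{\le k}\mathrm{Rm}$ of $M$; feeding in the bounds $|\nabla^{k}\mathrm{Rm}|\le C_{k}f^{-1}$ from Step~1 yields $\lambda$-independent $C^{k}$ bounds on $\lambda^{-2}\rho_{\lambda}^{*}\Phi^{*}g$ and, by the same integrability as in Step~2, $C^{k}$-Cauchyness. The limit is then $g_{c}$ by the $C^{0}$ identification. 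Since this holds for all $k$, the theorem follows.

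\noindent The main obstacle is Step~1, specifically the passage from $S\to0$ to the quantitative decay $S\le Cf^{-1}$: unlike the estimates of the earlier sections this genuinely requires the hypothesis (the shrinking cylinder shows it fails otherwise), and in the differential inequality for $fS$ the term $\mathrm{Ric}(\nabla f,\nabla f)$ is precisely the one that is hard to control, so an auxiliary compactness argument may well be needed. Once sharp decay is available, Steps~2 and 3 amount to a lengthy but routine analysis of an ODE along the normal flow.
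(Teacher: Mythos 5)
Your overall architecture matches the paper's: prove quadratic decay of the curvature (equivalently, an upper bound for $S\,f$), then conclude smooth convergence to a cone by the normal-flow/Shi-estimate argument, which the paper itself does not write out but delegates to \cite{KW}. Your Steps 2 and 3 are an acceptable sketch of that delegated part, and the evolution formula you derive for $g_{t}/t$ is correct. The problem is Step 1, which you yourself flag: the passage from $S\to 0$ to $S\leq Cf^{-1}$ is exactly the new content of this theorem, and neither of your two proposed routes proves it. The direct maximum principle for $w:=fS$ cannot close: at an interior maximum of (a cutoff times) $w$ one needs a lower bound for $\Delta_{f}w$ that becomes positive when $w$ is large, but your identity $\Delta_{f}(fS)=2S-2f\left\vert \mathrm{Ric}\right\vert ^{2}+2\left\langle \nabla f,\nabla S\right\rangle$ is dominated by the negative term $-2f\left\vert \mathrm{Ric}\right\vert ^{2}$, so the inequality $0\geq \Delta_{f}(fS)$ at the maximum carries no information; the cross term estimate $\left\vert \mathrm{Ric}(\nabla f,\nabla f)\right\vert \leq c\sqrt{f}\,S$ does not help with this structural sign problem. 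The fallback blow-down argument is only a slogan: to exclude a cylinder-type limit along $x_{i}\to\infty$ you would need a compactness theory for the rescaled pointed manifolds (uniform non-collapsing, identification of the limit as a shrinker or ancient solution, and a classification statement), none of which is set up here, and the paper uses nothing of this sort.

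What the paper actually does is an explicit barrier argument that you are missing. From Theorem \ref{Rc_improved} one has $\left\vert \mathrm{Ric}\right\vert\leq cS$, hence $\Delta _{f}S\geq S-c_{0}S^{2}$; one also computes $\Delta _{f}(f^{-1})\leq f^{-1}$ and $\Delta _{f}(f^{-2})\geq 2f^{-2}$ for $f$ large. The hypothesis $S\to 0$ is used to choose $r_{0}$ with $S<\frac{1}{4c_{0}}$ outside $D(r_{0})$, and one sets $u:=S-af^{-1}+c_{0}a^{2}f^{-2}$ with $a:=\frac{r_{0}}{2c_{0}}$. The point is the algebraic factorization $-c_{0}S^{2}+c_{0}a^{2}f^{-2}=-c_{0}\left( S-af^{-1}\right) \left( S+af^{-1}\right) \geq -c_{0}\,u\left( S+af^{-1}\right)$, which yields $\Delta _{f}u\geq u\left( 1-c_{0}S-c_{0}af^{-1}\right)$ on $M\setminus D(r_{0})$, with the coefficient positive there and with $u<0$ on $\partial D(r_{0})$ by the smallness of $S$. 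A cutoff maximum principle (with $\psi=\frac{R-t}{R}$, as in the earlier sections) then gives $u\leq cf^{-2}$, i.e. $S\,f\leq a+c$, which combined with the lower bound $S\,f\geq c$ of \cite{CLY} is estimate (\ref{s0}); Shi's estimates then give the decay of all $\nabla^{k}\mathrm{Rm}$ and the conclusion follows as in \cite{KW}. So the missing idea in your proposal is precisely this supersolution built from $f^{-1}$ and $f^{-2}$ together with the factorization trick; without it, Step 1, and hence the theorem, is not established.
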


\begin{proof}
We prove that there exist constants $c,C>0$ so that 
\begin{equation}
c\leq S\,f\leq C\text{ \ \ on }M.  \label{s0}
\end{equation}%
The lower bound was established in \cite{CLY}. Here we use the above
estimates to prove the upper bound.

Using Theorem \ref{Rc_improved} we see that there exists a constant $c_{0}>0$
for which 
\begin{eqnarray}
\Delta _{f}S &=&S-2\left\vert \mathrm{Ric}\right\vert ^{2}  \label{s1} \\
&\geq &S-c_{0}S^{2}.  \notag
\end{eqnarray}%
Using that $\Delta _{f}\left( f\right) =2-f,$ we obtain 
\begin{eqnarray*}
\Delta _{f}\left( f^{-1}\right)  &=&-\Delta _{f}\left( f\right)
f^{-2}+2\left\vert \nabla f\right\vert ^{2}f^{-3} \\
&\leq &\left( f-2\right) f^{-2}+2f^{-2} \\
&=&f^{-1}.
\end{eqnarray*}%
Choose $r_{0}\geq 1$ large enough so that on $M\backslash D\left(
r_{0}\right) $ 
\begin{equation}
S\,<\frac{1}{4c_{0}}  \label{s2}
\end{equation}%
for $c_{0}>0$ the constant in (\ref{s1}) and also so that  $6\left\vert \nabla
f\right\vert ^{2}\geq 4f.$ Then, 
\begin{eqnarray*}
\Delta _{f}\left( f^{-2}\right)  &=&2\left( f-2\right) f^{-3}+6\left\vert
\nabla f\right\vert ^{2}f^{-4} \\
&\geq &2f^{-2}.
\end{eqnarray*}%
Define function 
\begin{equation}
u:=S-af^{-1}+c_{0}a^{2}f^{-2},  \label{s3}
\end{equation}%
where 
\begin{equation*}
a:=\frac{r_{0}}{2c_{0}}.
\end{equation*}%
By the choice of $a$ and (\ref{s2}) it follows that 
\begin{equation}
u<0\text{ \ on \ }\partial D\left( r_{0}\right) .  \label{neg}
\end{equation}%
Indeed, on $\partial D\left( r_{0}\right) $ we have that 
\begin{equation*}
S-af^{-1}+2c_{0}a^{2}f^{-2}<\frac{1}{4c_{0}}-\frac{1}{2c_{0}}+\frac{1}{4c_{0}%
}=0.
\end{equation*}%
Now note that 
\begin{eqnarray*}
\Delta _{f}u &\geq &S-c_{0}S^{2}-af^{-1}+2c_{0}a^{2}f^{-2} \\
&=&u-c_{0}S^{2}+c_{0}a^{2}f^{-2} \\
&=&u-c_{0}\left( S-af^{-1}\right) \left( S+af^{-1}\right)  \\
&\geq &u-c_{0}u\left( S+af^{-1}\right) .
\end{eqnarray*}%
Therefore, on $M\backslash D\left( r_{0}\right) ,$ 
\begin{equation}
\Delta _{f}u\geq u\left( 1-c_{0}S-c_{0}af^{-1}\right) .  \label{s4}
\end{equation}

We now claim that 
\begin{equation}
u\leq cf^{-2}\text{ \ on }M\backslash D\left( r_{0}\right) .  \label{s5}
\end{equation}

To prove this claim, let $\psi \left( t\right) =\frac{R-t}{R}$ on $\left[ 0,R%
\right] $ and $\psi =0$ for $t\geq R$. Define $G:=\psi ^{2}u$ and compute 
\begin{gather}
\Delta _{f}G=\psi ^{2}\Delta _{f}u+u\Delta _{f}\psi ^{2}+2\left\langle
\nabla u,\nabla \psi ^{2}\right\rangle   \label{eq} \\
\geq G\,\left( 1-c_{0}S-c_{0}af^{-1}\right)   \notag \\
+2\psi ^{-1}\left( \Delta _{f}\psi \right) G-6\psi ^{-2}\left\vert \nabla
\psi \right\vert ^{2}G+2\psi ^{-2}\left\langle \nabla G,\nabla \psi
^{2}\right\rangle .  \notag
\end{gather}%
Let $q$ be the maximum point of $G$ on $D\left( R\right) \backslash D\left(
r_{0}\right) $. If $G\left( q\right) \leq 0,$ then $u\leq 0$ and the claim (%
\ref{s5}) is true. So we may assume $G\left( q\right) >0.$ In this case (\ref%
{neg}) implies that $q$ is an interior point of $D\left( R\right) \backslash
D\left( r_{0}\right) $. At $q$, by the maximum principle and (\ref{eq}), we
have 
\begin{eqnarray}
0 &\geq &1-c_{0}S-c_{0}af^{-1}+2\psi ^{-1}\left( \Delta _{f}\psi \right)
-6\psi ^{-2}\left\vert \nabla \psi \right\vert ^{2}  \label{s6} \\
&>&2\psi ^{-1}\left( \Delta _{f}\psi \right) -6\psi ^{-2}\left\vert \nabla
\psi \right\vert ^{2}.  \notag
\end{eqnarray}%
Since 
\begin{equation*}
\Delta _{f}\psi =\frac{f-2}{R}\geq \frac{f}{2R},
\end{equation*}%
it follows from (\ref{s6}) that 
\begin{equation*}
\frac{f}{R}\psi \leq 6\left\vert \nabla \psi \right\vert ^{2}\leq 6\frac{1}{%
R^{2}}f.
\end{equation*}%
This means that $\psi \left( q\right) \leq \frac{6}{R}.$ Hence,

\begin{eqnarray*}
G\left( q\right)  &=&u\left( q\right) \psi ^{2}\left( q\right)  \\
&\leq &\frac{c}{R^{2}}.
\end{eqnarray*}%
Therefore,%
\begin{eqnarray*}
\frac{1}{4}\sup_{D\left( \frac{R}{2}\right) \backslash D\left( r_{0}\right)
}u &\leq &\sup_{D\left( \frac{R}{2}\right) \backslash D\left( r_{0}\right) }G
\\
&\leq &G\left( q\right)  \\
&\leq &\frac{c}{R^{2}}.
\end{eqnarray*}%
Since $R$ is arbitrary, this again proves the claim (\ref{s5}).

In conclusion, on $M\backslash D\left( r_{0}\right) ,$ 
\begin{equation*}
S-af^{-1}+c_{0}a^{2}f^{-2}\leq cf^{-2}
\end{equation*}%
or $S\,f\leq a+c.$ This proves (\ref{s0}). 

The theorem now follows as in \cite{KW}. Indeed, using Shi's derivative
estimates, one can get corresponding sharp decay estimates for all covariant
derivatives of the curvature. Certainly, this can also be done directly, by working with the elliptic equations
instead of parabolic ones. These curvature estimates  then prove the convergence
to a cone as required by the theorem. We omit these details and refer the reader to \cite{KW}. 
\end{proof}

\section{Diameter estimate}

In this section, we work with compact shrinking gradient Ricci solitons of
arbitrary dimension and establish a diameter estimate from above. Namely, we
prove the following.

\begin{theorem}
Let $\left( M,g,f\right) $ be a compact gradient shrinking Ricci soliton of
dimension $n$. Then the diameter of $\left( M,g\right) $ has an upper bound
of the form 
\begin{equation*}
\mathrm{diam}\left( M\right) \leq c\left( n,\mathrm{inj}\left( M\right)
\right) ,
\end{equation*}%
where $\mathrm{inj}\left( M\right) $ is the injectivity radius of $\left(
M,g\right) .$
\end{theorem}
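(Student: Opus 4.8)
The plan is to reduce the diameter bound to an a priori upper bound for the potential function $f$, and then to obtain such a bound from the soliton identities together with a non-collapsing estimate supplied by the injectivity radius hypothesis. We may assume the soliton is nontrivial, since otherwise $\mathrm{Ric}=\frac12 g$ and Myers' theorem already gives $\mathrm{diam}(M)\leq \pi\sqrt{2(n-1)}$. Normalizing by (\ref{s}) so that $S+|\nabla f|^{2}=f$, we have $S>0$ (a compact flat soliton being impossible), hence $f>0$; if $p$ is a minimum point of $f$ then $f(p)=S(p)\leq \frac n2$, because $\mathrm{Hess}\,f(p)\geq 0$ forces $\mathrm{Ric}(p)\leq\frac12 g$. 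Moreover $|\nabla f|^{2}=f-S\leq f$, so $2\sqrt f$ is $1$-Lipschitz on $M$; combined with the Cao--Zhou estimate (\ref{f}), which on the compact soliton $M=D(\sup_M f)$ reads $r(x)\leq 2\sqrt{f(x)}+c(n)$, this gives
\begin{equation*}
\mathrm{diam}(M)\leq 2\max_{x\in M}r(x)\leq 4\sqrt{\sup_M f}+c(n).
\end{equation*}
Hence it suffices to bound $T:=\sup_M f$ in terms of $n$ and $\mathrm{inj}(M)$.

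Next I would extract $L^{2}$ information about $f$ from the weighted identities. Since $\Delta_{f}f=\frac n2-f$ by (\ref{delta}) and $\Delta_{f}$ is self-adjoint for $e^{-f}dv$ on the closed manifold $M$, integration gives $\int_M f\,e^{-f}=\frac n2\int_M e^{-f}$, and applying $\Delta_{f}$ to $f^{2}$ and integrating gives $\int_M\big(f-\frac n2\big)^{2}e^{-f}\leq \frac n2\int_M e^{-f}$. Thus $g:=f-\frac n2$ solves the elliptic equation $\Delta_f g=-g$ with $\|g\|_{L^{2}(e^{-f}dv)}^{2}\leq \frac n2\int_M e^{-f}dv$, and by the standard properties of Perelman's entropy one has the dimensional bound $\int_M e^{-f}dv\leq (4\pi)^{n/2}$. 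Consequently $\|g\|_{L^{2}(e^{-f}dv)}\leq c(n)$.

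The final step is to promote this to a bound on $\sup_M g=T-\frac n2$ near a point $q$ where $f(q)=T$ (so that $\nabla f(q)=0$ and $T=S(q)$). Here the injectivity radius enters: since $\mathrm{inj}(M)\geq i_{0}$, Croke's inequalities provide a volume lower bound $\mathrm{Vol}(B_{q}(i_{0}/2))\geq c(n)i_{0}^{n}$ and a Sobolev inequality on $B_{q}(i_{0})$ with constant depending only on $n$ and $i_{0}$. Running a De Giorgi--Nash--Moser iteration for $\Delta_f g=-g$ on $B_{q}(i_{0})$ against this Sobolev inequality — using the $1$-Lipschitz bound on $2\sqrt f$ to control the oscillation of the weight $e^{-f}$ on that ball — should yield
\begin{equation*}
T-\tfrac n2=g(q)\leq \sup_{B_{q}(i_{0}/2)}g\leq c(n,i_{0})\,\|g\|_{L^{2}(B_{q}(i_{0}),\,e^{-f}dv)}\leq c(n,i_{0}),
\end{equation*}
which together with the reduction in the first paragraph proves $\mathrm{diam}(M)\leq c(n,\mathrm{inj}(M))$.

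I expect the main obstacle to be precisely this last passage from $L^{2}$ to $L^{\infty}$ with a constant that does not secretly depend on $T$. The weighted measure $e^{-f}dv$ may degenerate relative to the Riemannian volume on scale $i_{0}$ by a factor that a priori grows with $T$ (the Lipschitz bound only gives $f_{\max}-f_{\min}\leq 2i_{0}\sqrt T$ on $B_{q}(i_{0})$), so the iteration has to be set up carefully, combining the equation $\Delta_f g=-g$ with the soliton structure and with Croke's non-collapsing in an essential way; alternatively one may have to replace the weighted $L^{2}$ bound for $g$ by a mean-value argument for the subsolution inequality $\Delta f\leq \frac n2$ on $B_{q}(i_{0})$. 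Either way, the point is that without the injectivity radius hypothesis one only recovers the circular pair of estimates $\mathrm{diam}(M)\leq c\sqrt T$ and $T\leq c\,\mathrm{diam}(M)^{2}$, so breaking this scaling is exactly where $\mathrm{inj}(M)$ must be used.
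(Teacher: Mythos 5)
Your reduction to bounding $T:=\sup_M f$ is fine (and parallels the end of the paper's argument), and the weighted $L^2$ identities you derive from $\Delta_f f=\frac n2-f$ are correct, but the proof has a genuine gap exactly at the step you yourself flag: the passage from $\|f-\frac n2\|_{L^2(e^{-f}dv)}\leq c(n)$ to $f(q)-\frac n2\leq c(n,i_0)$ at the maximum point $q$. The equation you iterate is $\Delta g-\langle\nabla f,\nabla g\rangle=-g$, and on $B_q(i_0)$ you only know $|\nabla f|\leq\sqrt f\leq\sqrt T$; equivalently, if you work with the weighted measure, its density $e^{-f}$ oscillates on $B_q(i_0)$ by a factor as large as $e^{c\,i_0\sqrt T}$. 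Either way every constant produced by a De Giorgi--Nash--Moser scheme (or by a mean-value inequality for the subsolution $\Delta f\geq\frac n2-f$) depends on $T$, so the claimed estimate $g(q)\leq c(n,i_0)\|g\|_{L^2(B_q(i_0),e^{-f}dv)}$ is not established and the argument is circular. The obstruction is structural, not technical: the measure $e^{-f}dv$ puts exponentially little mass precisely where $f$ is largest, so the global bounds $\int_M e^{-f}dv\leq(4\pi)^{n/2}$ (itself a nontrivial external theorem on Gaussian density) and $\int_M(f-\frac n2)^2e^{-f}dv\leq c(n)$ carry essentially no information near $q$; unwinding the weight costs a factor $e^{T-O(\sqrt T)}$, which swamps the Croke volume/Sobolev input. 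So the key point --- how $\mathrm{inj}(M)$ breaks the scaling --- is still missing.

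For comparison, the paper uses the injectivity radius in a completely different and more elementary way: take a minimizing normal geodesic $\sigma$ from the minimum point $p$ to the maximum point $q$ of $f$, with $R=d(p,q)$. A second variation argument with a cutoff, combined with $\mathrm{Ric}(\sigma',\sigma')+f''=\frac12$ along $\sigma$, gives $f(R)-f(t)\geq\frac R{16}-c(n)$ for $t\in[R-1,R-\frac12]$. Then, after normalizing $\mathrm{inj}(M)=2$, the geodesic can be extended past $q$ so that it remains minimizing on $[R-1,R+1]$; applying second variation on this extended segment with a tent-shaped cutoff centered at $q$, and using that $f(R)=\max_M f$, yields $2f(R)\leq\int_{R-1}^{R+1}f(t)\,dt+c(n)\leq\int_{R-1}^{R-1/2}f(t)\,dt+\frac32 f(R)+c(n)$. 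Feeding in the first inequality forces $R\leq c(n)$, hence $\max_M f\leq c(n)$ and the diameter bound. If you want to salvage your scheme, you would need an $L^2\to L^\infty$ mechanism whose constants see only the local geometry guaranteed by $\mathrm{inj}(M)$ and not the drift $\nabla f$; the paper's geodesic argument avoids this issue entirely by never leaving one dimension.
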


\begin{proof}
For simplicity, we will henceforth assume that $\mathrm{inj}\left( M\right)
=2,$ and prove that $\mathrm{diam}\left( M\right) \leq c\left( n\right) .$
Since $M$ is compact, the potential $f$ assumes a maximum and a minimum
value. Let us fix 
\begin{eqnarray*}
f\left( p\right) &=&\min_{M}f \\
f\left( q\right) &=&\max_{M}f.
\end{eqnarray*}%
We continue to normalize $f$ so that 
\begin{equation}
S+\left\vert \nabla f\right\vert ^{2}-f=0,  \label{0}
\end{equation}%
where $S$ is the scalar curvature. Recall again that 
\begin{eqnarray}
f\left( x\right) &\leq &\left( \frac{1}{2}d\left( p,x\right) +c\left(
n\right) \right) ^{2}\text{ \ for all }x\in M,  \label{1} \\
f\left( x\right) &\geq &\frac{1}{4}d^{2}\left( p,x\right) -c\left( n\right)
d\left( p,x\right) \text{ for all }x\in M\backslash B_{p}\left( r_{0}\left(
n\right) \right) .  \notag
\end{eqnarray}%
Here both $c\left( n\right) $ and $r_{0}\left( n\right) $ depend only on
dimension $n.$ Since $S\geq 0,$ we see that (\ref{1}) provides a uniform
upper bound estimate for $\left\vert \nabla f\right\vert $ as well. Indeed, $%
\left\vert \nabla f\right\vert ^{2}\leq f$.

Consider now a minimizing normal geodesic $\sigma$ joining $p$ and $q,$
parametrized so that $\sigma \left( 0\right) =p$ and $\sigma \left( R\right)
=q.$ We apply the second variation formula of arc length to $\sigma \left(
s\right) ,$ $0\leq s\leq R,$ and obtain

\begin{equation*}
\int_{0}^{R}\mathrm{Ric}\left( \sigma ^{\prime }\left( s\right) ,\sigma
^{\prime }\left( s\right) \right) \phi ^{2}\left( s\right) ds\leq \left(
n-1\right) \int_{0}^{R}\left( \phi ^{\prime }\left( s\right) \right) ^{2}ds
\end{equation*}%
for any Lipschitz function $\phi $ with compact support in $\left[ 0,R\right]%
.$ Using the fact that

\begin{equation*}
\mathrm{Ric}(\sigma ^{\prime }\left( s\right) ,\sigma^{\prime }\left(
s\right))+f^{\prime \prime }(s)=\frac{1}{2}
\end{equation*}
and integrating by parts, we obtain

\begin{equation}
\frac{1}{2}\int_{0}^{R}\phi ^{2}\left( s\right) ds\leq \left( n-1\right)
\int_{0}^{R}\left( \phi ^{\prime }\left( s\right) \right)
^{2}ds-2\int_{0}^{R}f^{\prime }\left( s\right) \phi \left( s\right) \phi
^{\prime }\left( s\right) ds,  \label{2}
\end{equation}%
where 
\begin{equation}
f\left( s\right) :=f\left( \sigma \left( s\right) \right) .  \label{a}
\end{equation}

For any $R-1\leq t\leq R-\frac{1}{2},$ let us take 
\begin{equation*}
\phi \left( s\right) :=\left\{ 
\begin{array}{c}
s \\ 
1 \\ 
\frac{R-s}{R-t}%
\end{array}%
\right. 
\begin{array}{c}
\text{for }0\leq s\leq 1 \\ 
\text{for }1\leq s\leq t \\ 
\text{for }t\leq s\leq R%
\end{array}%
\end{equation*}%
Then we get from (\ref{2}) that 
\begin{eqnarray*}
\frac{1}{2}\left( t-1\right) &\leq &\frac{1}{2}\int_{0}^{R}\phi ^{2}\left(
s\right) ds \\
&\leq &\left( n-1\right) \int_{0}^{R}\left( \phi ^{\prime }\left( s\right)
\right) ^{2}ds-2\int_{0}^{R}f^{\prime }\left( s\right) \phi \left( s\right)
\phi ^{\prime }\left( s\right) ds \\
&=&\left( n-1\right) \left( 1+\frac{1}{R-t}\right) -2\int_{0}^{1}f^{\prime
}\left( s\right) sds+\frac{2}{\left( R-t\right) ^{2}}\int_{t}^{R}f^{\prime
}\left( s\right) \left( R-s\right) ds.
\end{eqnarray*}%
By (\ref{1}) and the subsequent comments, it is easy to see that 
\begin{equation*}
\sup_{B_{p}\left( 1\right) }\left\vert \nabla f\right\vert \leq c\left(
n\right) .
\end{equation*}%
This implies that 
\begin{equation*}
\int_{t}^{R}f^{\prime }\left( s\right) \left( R-s\right) ds\geq \frac{1}{16}%
R-c\left( n\right) .
\end{equation*}%
Integration by parts then yields

\begin{equation*}
-\left( R-t\right) f\left( t\right) +\int_{t}^{R}f\left( s\right) ds\geq 
\frac{1}{16}R-c\left( n\right) .
\end{equation*}%
Since $f\left( s\right) \leq f\left( R\right) =\max f$ and $\frac{1}{2}\leq
R-t\leq 1,$ we see that

\begin{eqnarray*}
-\left( R-t\right) f\left( t\right) +\int_{t}^{R}f\left( s\right) ds &\leq
&\left( R-t\right) \left( f\left( R\right) -f\left( t\right) \right) \\
&\leq &f\left( R\right) -f\left( t\right) .
\end{eqnarray*}%
Thus,

\begin{equation}
f\left( R\right) -f\left( t\right) \geq \frac{1}{16}R-c\left( n\right)
\label{3}
\end{equation}%
for all $R-1\leq t\leq R-\frac{1}{2}.$

Now the assumption that $\mathrm{inj}\left( M\right) =2$ implies that the
geodesic $\sigma \left( s\right) ,$ $R-1\leq s\leq R,$ can be extended into
a minimizing normal geodesic over $R-1\leq s\leq R+1.$

We consider the cutoff function $\psi $ on $\left[ R-1,R+1\right] $ defined
by 
\begin{equation*}
\psi \left( t\right) :=\left\{ 
\begin{array}{c}
t-\left( R-1\right) \\ 
R-t%
\end{array}%
\right. 
\begin{array}{l}
\text{for }R-1\leq t\leq R \\ 
\text{for }R\leq t\leq R+1%
\end{array}%
\end{equation*}

Applying the second variation formula to $\sigma \left( t\right) $ for $%
R-1\leq t\leq R+1,$ we have that (see (\ref{2})) 
\begin{equation*}
\frac{1}{2}\int_{R-1}^{R+1}\psi ^{2}\left( t\right) dt\leq \left( n-1\right)
\int_{R-1}^{R+1}\left( \psi ^{\prime }\left( t\right) \right)
^{2}dt-2\int_{R-1}^{R+1}f^{\prime }\left( t\right) \phi \left( t\right) \phi
^{\prime }\left( t\right) dt.
\end{equation*}%
This implies 
\begin{equation*}
\int_{R-1}^{R}f^{\prime }\left( t\right) \phi \left( t\right)
dt-\int_{R}^{R+1}f^{\prime }\left( t\right) \phi \left( t\right) dt\leq
c\left( n\right) .
\end{equation*}

After integrating by parts, this can be rewritten as 
\begin{equation}
2f\left( R\right) \leq \int_{R-1}^{R+1}f\left( t\right) dt+c\left( n\right) .
\label{4}
\end{equation}%
Note that $f\left( R\right) =f\left( q\right) =\max f.$ So 
\begin{equation*}
\int_{R-1}^{R+1}f\left( t\right) dt\leq \int_{R-1}^{R-\frac{1}{2}}f\left(
t\right) dt+\frac{3}{2}f\left( R\right) .
\end{equation*}%
By (\ref{4}), this implies 
\begin{equation}
f\left( R\right) \leq 2\int_{R-1}^{R-\frac{1}{2}}f\left( t\right) dt+c\left(
n\right) .  \label{5}
\end{equation}%
Using (\ref{3}), we conclude 
\begin{equation*}
d\left( p,q\right) =R\leq c\left( n\right) .
\end{equation*}%
Therefore, by (\ref{1}), 
\begin{equation*}
\max_{M}f\leq c\left( n\right) .
\end{equation*}%
Now the lower bound of $f$ from (\ref{1}) implies 
\begin{equation*}
d\left( p,x\right) \leq c\left( n\right)
\end{equation*}%
for all $x\in M.$ By the triangle inequality, one immediately sees that 
\begin{equation*}
\mathrm{diam}\left( M\right) \leq c\left( n\right) .
\end{equation*}%
This proves the theorem.
\end{proof}

\section{Acknowledgment}

We wish to thank Huai-Dong Cao for discussions that have motivated this
work. The first author was partially supported by NSF grant No. DMS-1262140
and the second author by NSF grant No. DMS-1105799.

\address {\noindent Department of Mathematics\\ University of Connecticut\\
Storrs, CT 06268\\ USA\\ \email{\textit{E-mail address}: {\tt
ovidiu.munteanu@uconn.edu} } \vskip 0.3in \address {\noindent School of
Mathematics \\ University of Minnesota\\ Minneapolis, MN 55455\\ USA\\
\email{\textit{E-mail address}: {\tt jiaping@math.umn.edu}} \end{document}